\newcommand{\fB}{\mathfrak{B}}
\newcommand{\cA}{\mathcal{A}}
\newcommand{\cC}{\mathcal{C}}
\newcommand{\cD}{\mathcal{D}}
\newcommand{\cF}{\mathcal{F}}
\newcommand{\cH}{\mathcal{H}}
\newcommand{\cL}{\mathcal{L}}
\newcommand{\cM}{\mathcal{M}}
\newcommand{\cO}{\mathcal{O}}
\newcommand{\cQ}{\mathcal{Q}}
\newcommand{\cT}{\mathcal{T}}
\newcommand{\cY}{\mathcal{Y}}
\newcommand{\cX}{\mathcal{X}}
\newcommand{\cZ}{\mathcal{Z}}
\newcommand{\bH}{\mathbb{H}}
\newcommand{\bL}{\mathbb{L}} 
\newcommand{\bZ}{\mathbb{Z}}
\newcommand{\bP}{\mathbb{P}}
\newcommand{\bR}{\mathbb{R}}
\newcommand{\bC}{\mathbb{C}}
\newcommand{\bQ}{\mathbb{Q}}
\newcommand{\RP}{\mathbb{RP}}
\newcommand{\GL}{\mathrm{GL}}
\newcommand{\SL}{\mathrm{SL}}
\newcommand{\MF}{\mathrm{MF}} 
\newcommand{\Spec}{\mathrm{Spec}}
\newcommand{\CY}{\mathrm{CY}}
\newcommand{\LG}{\mathrm{LG}}
\newcommand{\vir}{ {\mathrm{vir}} }
\newcommand{\rank}{\mathrm{rank}}
\newcommand{\Hom}{\mathrm{Hom}}
\newcommand{\Pic}{\mathrm{Pic}}
\newcommand{\dd}{\mathrm{d}}
\newcommand{\ch}{\mathrm{ch}}
\newcommand{\Ch}{\mathrm{Ch}}
\newcommand{\ev}{\mathrm{ev}}
\newcommand{\tot}{\mathrm{tot}}
\newcommand{\Eff}{\mathrm{Eff}}
\newcommand{\even}{\mathrm{even}}
\newcommand{\id}{\mathrm{id}}
\newcommand{\age}{\mathrm{age}}
\newcommand{\CR}{\mathrm{CR}}
\newcommand{\Crit}{\mathrm{Crit}}
\newcommand{\tdch}{\mathrm{tdch}}
\newcommand{\inv}{\mathrm{inv}}
\newcommand{\one}{\mathbf{1}}
\newcommand{\bft}{\mathbf{t}}
\newcommand{\be}{\mathbf{e}}
\newcommand{\bp}{\mathbf{p}}
\newcommand{\fg}{\mathfrak{g}} 
\newcommand{\forget}{\mathfrak{forget}}
\newcommand{\tG}{\widetilde{G}}
\newcommand{\tN}{\widetilde{N}}
\newcommand{\tX}{\widetilde{X}}
\newcommand{\tphi}{\widetilde{\phi}}
\newcommand{\hG}{\widehat{G}}
\newcommand{\lra}{\longrightarrow} 
\newcommand{\Mbar}{\overline{\cM}}
\newcommand{\bt}{{\bf t}}
\newtheorem{theorem}{Theorem}[section]
\newtheorem{lemma}[theorem]{Lemma}
\newtheorem{conjecture}[theorem]{Conjecture}
\newtheorem{proposition}[theorem]{Proposition}
\newtheorem{corollary}[theorem]{Corollary}
\theoremstyle{definition}
\newtheorem{definition}[theorem]{Definition}
\theoremstyle{remark}
\newtheorem{remark}[theorem]{Remark}
\numberwithin{equation}{section}
\begin{document}

\title[Open/closed correspondence and  extended  LG/CY correspondence]{Open/closed Correspondence and  Extended LG/CY Correspondence   for Quintic Threefolds} 
%    Information for first author
\author{Konstantin Aleshkin}
%    Address of record for the research reported here
\address{Konstantin Aleshkin, Department of Mathematics, Columbia University, 2990 Broadway, New York, NY 10027, USA}
%    Current address
\curraddr{Kavli Institute for the Physics and Mathematics of the Universe (WPI), The University of Tokyo Institutes for Advanced Study, The University of Tokyo, Kashiwa, Chiba 277-8583, Japan}
\email{konstantin.aleshkin@ipmu.jp}
%    \thanks will become a 1st page footnote.
% \thanks{The first author was supported in part by NSF Grant \#000000.}

%    Information for second author
\author{Chiu-Chu Melissa Liu}
\address{Chiu-Chu Melissa Liu, Department of Mathematics, Columbia University, 2990 Broadway, New York, NY 10027}
\email{ccliu@math.columbia.edu}
\thanks{The second author was supported in part by NSF Grant DMS-1564497.}

%    General info
\subjclass[2020]{14N35, 53D45, 14J33}
% \date{July 30, 2023.}

\dedicatory{Dedicated to Professor Ezra Getzler on the occasion of his sixtieth birthday.}

\keywords{Gromov-Witten invariants, Calabi-Yau threefolds, FJRW invariants, Landau-Ginzberg models, matrix factorizations, gauge linear sigma models, mirror symmetry}

\begin{abstract}
  We show that Walcher's disk potential for the quintic threefold can be represented as
  a central charge of a specific Gauged Linear Sigma Model which we call the
extended  quintic GLSM. This representation provides an open/closed correspondence
  for the quintic threefold since the central charge is a generating function of closed genus-zero GLSM invariants. 
We also explain how open Landau-Ginzburg/Calabi-Yau correspondence
and open mirror symmetry for the quintic are compatible with wall-crossing and mirror symmetry of the extended GLSM,
respectively.
\end{abstract}

\maketitle

\tableofcontents

\section{Introduction} 

% Enumerative invariants with boundary (open invariants) are notoriously difficult to
% define and compute due to possible odd dimensionality and non-orientability of the
% moduli spaces on the one side % (see e.g~\cite{ST16}) 
% and a choice of a boundary condition for the open Riemann surfaces on the other.

Open/closed correspondence  proposed by Mayr~\cite{Ma} relates 
genus-zero A-model topological open strings on a toric Calabi-Yau threefold $X$ to 
genus-zero A-model topological closed strings on a toric Calabi-Yau fourfold $\tX$.
More precisely, the correspondence relates (i) genus-zero open Gromov-Witten (GW) 
invariants  counting holomorphic disks in a symplectic toric Calabi-Yau threefold $X$ with boundaries in
an Aganagic-Vafa Lagrangian A-brane  $L$,  and  (ii) genus-zero (closed) GW invariants counting holomorphic spheres in a symplectic toric 
Calabi-Yau fourfold $\tX$ determined by the pair $(X,L)$. The correspondence has been proved in full generality and also generalized
to toric Calabi-Yau 3-orbifolds by  S. Yu and the second named author \cite{LY22, LY}. Open string mirror symmetry relates
disk invariants of $X$ to relative periods of a holomorphic 3-form on the Calabi-Yau threefold $X^\vee$ mirror to $X$;
closed string mirror symmetry relates genus-zero GW invariants of $\tX$ to period integrals of a holomorphic 4-form on the
Calabi-Yau fourfold $\tX^\vee$ mirror $\tX$. There is a B-model open/closed correspondence which is 
compatible with the A-model open/closed correspondence and mirror symmetry \cite{LY}.  The extended Picard-Fuchs equations satisfied by the disk potential
(together with components of the $I$-function of $X$) can be identified with the 
Picard-Fuchs equations satisfied by components of the $I$-function of the toric Calabi-Yau fourfold $\tX$~\cite{Ma, LM, FL13, FLT22}.

Toric Calabi-Yau manifolds/orbifolds are non-compact. Open GW invariants in (i) and  closed GW invariants in (ii) are defined and computed by torus localization. 
In this paper we extend open/closed correspondence to compact Calabi-Yau threefolds. 
Solomon \cite{So} defined open GW invariants counting holomorphic disks in a compact symplectic manifold $(X,\omega)$ with boundaries in
a Lagrangian submanifold $L\subset X$ under the assumption that $L=X^\phi$ is the fixed locus of an anti-symplectic involution $\phi:X\to X$, 
is relatively spin, and  $\dim_\bR L = 2$ or  $3$. Solomon’s disk invariants coincide with Welschinger’s invariants counting real pseudo-holomorphic curves in the strongly positive case \cite{We05, We}.  When $X=X_5$  is the quintic Calabi-Yau threefold and $X^\phi=\bR X_5$ is the real quintic, a mirror formula for disk invariants of the pair $(X_5, \bR X_5)$ was conjectured by Walcher \cite{Wa07} and proved in \cite{PSW08}, see Section \ref{sec:CYopen}.  

It is well-known that components of the $I$-function of the quintic threefold  satisfy the degree 4
Picard-Fuchs differential equation~\eqref{eq:iPF}. Walcher's B-model disk potential $\cT^{\CY}$ 
satisfies a weaker degree 5 differential equation~\eqref{eq:ePF} together
with components of the $I$-function of $X_5$. The extended Picard-Fuchs equation \eqref{eq:ePF} is not the Picard-Fuchs equation of any Calabi-Yau variety.
In this paper we notice that Equation~\eqref{eq:ePF} is the equation satisfied
by components of the $I$-function of a particular Gauged Linear Sigma Model (GLSM) which we call the extended quintic GLSM, see Section \ref{sec:extended}. 
See~\cite{Wi93} for the introduction of GLSM in physics, \cite{FJR18,CFGKS,FK} for a mathematical theory
via algebraic geometry, and \cite{TX18, TX21, TX16, TX20, TX17, TX} for a mathematical theory via symplectic geometry. 

GLSM is a GW type enumerative theory that generalizes the
classical theory to the case where the target is a (not necessarily smooth)
critical locus of a regular function on a GIT quotient DM stack of a vector space.
In particular case where the critical locus is a smooth complete intersection DM stack, GLSM invariants  are GW
invariants of the critical locus up to a sign~\cite{CFGKS}.

We show that the disk potential  $\cT^{\CY}$ is equal to the
extended GLSM central charge~\cite{AL23} associated to a certain matrix factorization
up to a constant factor (Proposition \ref{prop:openHC}). 
Geometry of the extended GLSM depends on the choice of the
superpotential. We show that there is a natural one-parametric family of
superpotentials deforming the classical quintic superpotential. For the generic
value of the parameter the critical locus is a smooth DM stack which is a product
$X_5\times B\mu_2$  of the quintic threefold $X_5$ and the classifying space $B\mu_2$ of $\mu_2= \{ \pm 1\}$. 
In this case, the GLSM central charges~\cite{AL23} reproduce the components of the $I$-functions of $X_5$.
However, for the degenerate choice of the parameter the critical locus becomes larger and 
singular, and in this case there is an additional central charge corresponding
to $\cT^{\CY}$. 

GLSM setting is very convenient for studying the wall-crossing problems
in the target geometry such as Landau-Ginzburg/Calabi-Yau (LG/CY) correspondence.
For the quintic threefold it has been worked out by Chiodo-Ruan~\cite{CR10}.
Walcher~\cite{Wa07} performed analytic continuation of the disk potential $\cT^{\CY}$ in the CY phase
and obtained a disk potential $\cT^{\LG}$ in the LG phase. This leads to a mirror conjecture on  genus-zero open FJRW invariants, see Section \ref{sec:LGopen}. Genus-zero open FJRW  invariants have been constructed for the affine LG models 
$(x^r, \mu_r)$ \cite{BCT22, BCT}, $(x^r+ y^s, \mu_r\times \mu_s)$ \cite{GKT},  and (after the first version of this paper appeared in arXiv)  $(x_1^r + \cdots + x_m^r, \mu_r)$ \cite{TZ1, TZ2}; the case 
$(x_1^5 + \cdots + x_5^5, \mu_5)$ corresponds to the quintic threefold. 

In Section \ref{sec:central-charge-hybrid}, we show that the extended (open and closed) LG/CY correspondence corresponds to the usual wall-crossing
for closed invariants of the extended quintic GLSM~\cite{AL23}.  
% \textcolor{red}{Thus, the open LG/CY correspondence is
% equivalent to open/closed correspondence for the FJRW theory of $(x_1^5+\cdots + x_5^5, \mu_5)$. }
In this case, the GIT stability condition $\zeta$ is a real number. The positive phase $\zeta>0$ corresponds to the CY phase, while
the negative phase $\zeta<0$ corresponds to the LG phase; they are separated by the wall $\zeta=0$. 
In Section \ref{sec:period-positive} and Section \ref{sec:period-negative},  we provide the mirror integral representations for the disk potentials $\cT^{\CY}$ and $\cT^{\LG}$, respectively, 
from the extended GLSM viewpoint. 

Buryak-Clader-Tessler \cite{BCT24} establish an open-closed correspondence between open $r$-spin theory 
and the closed extended $r$-spin theory constructed in \cite{BCT19}. In work in progress \cite{Ni},  S. Nill  
provides a different extension of genus-zero FJRW theory of $(x_1^5+\cdots + x_5^5, \mu_5)$ by integration of the fifth power of
Witten's top Chern class of the closed extended $5$-spin theory.

\subsection*{Outline} 
In Section~\ref{sec:review}, we review the LG/CY correspondence and the disk potential
for the quintic threefold. Section \ref{sec:GLSM} is an introduction to GLSM $I$-functions and central
charges following~\cite{AL23}. In particular, we review the Higgs-Coulomb correspondence
and wall crossing for central charges. The wall crossing is demonstrated on
the quintic example in Section~\ref{sec:LGCY0}. In Section~\ref{sec:extended} we introduce
the extended GLSM, compute $I$-functions and central charges
and show the open/closed correspondence for the quintic threefold.
Finally, in Section~\ref{sec:B} we explain the B-model side of the story. First, we
review the B-model representation of the disk potential following~\cite{Wa07}.
Then we provide the B-model of the extended GLSM and show how the quintic disk potential
appears as a usual period of the mirror extended GLSM.

\subsection*{Acknowledgments} We thank Kentaro Hori, Johanna Knapp, Sebastian Nill, Renata Picciotto, and Johannes Walcher for helpful discussion. We thank Sebastian Nill for explaining his work. We thank Gang Tian, Johannes Walcher, Guangbo Xu, Song Yu, and the anonymous referee for their comments on  early versions of this paper.  The second named author wishes to thank Ezra Getzler for numerous helpful conversations on GW theory, open GW theory, 
and mirror symmetry, as well as his encouragement, help, and support, over the past twenty years.

\section{LG/CY correspondence and mirror symmetry}  \label{sec:review}

\subsection{LG/CY correspondence}

An affine Landau-Ginzburg (LG) model is a pair $(W,G)$, where the superpotential
$W:\bC^N \to \bC$ is a non-degenerate, quasi-homogeneous
polynomial, and $G$ is an admissible subgroup of the finite abelian group
$G_W :=\{ \gamma\in (\bC^*)^N: W(\gamma x) = W(x)\}$.
 Fan-Jarvis-Ruan-Witten (FJRW) invariants of $(W,G)$ are virtual counts of sections of orbifold
line bundles over orbicurves which are solutions of the
(perturbed) Witten equations defined by the superpotential $W$. FJRW theory \cite{FJR07} can be viewed as a mathematical theory of A-model topological strings on affine LG models.

Gromov-Witten (GW) invariants are virtual counts of parametrized holomorphic curves in K\"{a}hler manifolds, or more generally
parametrized holomorphic orbicurves in K\"{a}hler orbifolds.  
GW theory can be viewed as a mathematical theory of A-model topological strings on K\"{a}hler manifolds/orbifolds.

Given an affine LG model $(W,G)$, the quasi-homogeneity of $W$ implies that the equation 
$W=0$ defines a hypersurface $X_W$ in a weighted
projective space, and the non-degeneracy of $W$ implies that
$X_W$ has at most quotient singularities. Suppose
that $X_W$ is Calabi-Yau in the sense that
$c_1(X_W)=0 \in H^2(X_W;\bQ)$. Then 
$\cX_W := [X_W/\tG]$ is a compact Calabi-Yau  orbifold, where $\tG$ is the quotient of $G$ by 
a cyclic subgroup $\langle J_W\rangle$ which acts trivially on $X_W$.  The Landau-Ginzburg/Calabi-Yau (LG/CY) correspondence relates  FJRW theory of the affine LG model $(W,G)$ and (orbifold) GW theory of the Calabi-Yau orbifold $\cX_W$.

\subsection{LG/CY correspondence for quintic threefolds} 
Chiodo-Ruan \cite{CR10} proved genus-zero LG/CY correspondence when 
$W= W_5: = x_1^5+\cdots + x_5^5$ is the Fermat quintic polynomial on $\bC^5$ and $G= \mu_5$ is the group of 5-th roots of unity, acting diagonally on  $\bC^5$. In this case $\tG$ is trivial, and  
$$
\cX_W= X_W = X_5 :=\{ W_5(x)=0\} \subset \bP^4
$$
is the Fermat quintic threefold, which is a non-singular projective Calabi-Yau threefold. 
Indeed, all smooth quintic threefolds (i.e. degree five hypersurfaces in $\bP^4$) have the same GW theory, so Chiodo-Ruan proved
genus-zero LG/CY correspondence for all (smooth) quintic Calabi-Yau threefolds.  
\subsubsection{GW invariants} 
Given any positive integer $d$, let $\Mbar_{0,0}(X_5, d)$ and $\Mbar_{0,0}(\bP^4,d)$ be moduli spaces of genus-0, $0$-pointed,
degree $d$ stable maps to $X_5$ and to $\bP^4$, respectively.  The genus-zero, degree $d$ GW invariant of $X_5$ is 
$$
N_d^{\CY}:= \int_{[\Mbar_{0,0}(X_5,d)]^\vir}1 = \int_{[\Mbar_{0,0}(\bP^4,d)] }e(E_d) \in \bQ
$$
where $E_d$ is an orbifold vector bundle over the compact complex orbifold $\Mbar_{0,0}(\bP^4,d)$ whose fiber over the moduli point $[f:C\to\bP^4]$ is 
$H^0(C, f^*\cO_{\bP^4}(1))$, and $e(E_d)= c_{5d+1}(E_d)$ is the Euler class of $E_d$; we have
$\rank E_d =\dim \Mbar_{0,0}(\bP^4,d)=5d+1$.  The genus-zero GW potential of $X_5$ is
$$
F^{\CY}_0(Q) :=\frac{5}{6} (\log Q)^3 + \sum_{d=1}^\infty N^{\CY}_d Q^d.  
$$

\subsubsection{The CY mirror theorem} 

The (small) $J$-function of GW theory of $X_5$ is
\begin{equation}\label{eq:J-CY}
J^{\CY}(Q,z) = z e^{ (\log Q) H/z}\left(1+\sum_{d=1}^\infty Q^d  \sum_{k=0}^3 \left(\int_{[\Mbar_{0,1}(X_5,d)]^\vir}\frac{\ev^*H^{3-k}}{z(z-\psi)} \right)\frac{H^k}{5}
\right)
\end{equation} 
where $H\in H^2(X_5)$ is the restriction of the hyperplane class in $H^2(\bP^4)$ 
and $\psi=c_1(\bL)$ is the first Chern class of a line bundle $\bL$ on 
$\Mbar_{0,1}(X_5,d)$ whose fiber over
the moduli point $[f:(C,x)\to X_5]$ is the cotangent line $T_x^*C$.
Let $D=Q\frac{\partial}{\partial Q}$. Then
$J^{\CY}(Q,z)$ can be expressed in terms of $F^{\CY}_0$:
\begin{equation}
\begin{aligned}
J^{\CY}(Q,z) =&  z  +  (\log Q)  H +  \frac{1}{5} DF^{\CY}_0(Q)   z^{-1}H^2 \\
& + \Big(\frac{1}{5} (\log Q) DF^{\CY}_0(Q) -\frac{2}{5}F^{\CY}_0(Q)  \Big)     z^{-2} H^3.
\end{aligned}
\end{equation}

The (small) $I$-function is given explicitly by 
\begin{equation}\label{eqn:I-CY}
I^{\CY}(q,z) = z e^{ (\log q) H/z} \Big( 1+ \sum_{d=1}^\infty q^d \frac{\prod_{m=1}^{5d}(5H+mz)}{\prod_{m=1}^d (H+mz)^5}  \Big)
= z \sum_{k=0}^3 I^{\CY}_k(q)\frac{H^k}{z^k}
\end{equation}
where $H^4=0$.  In particular,
$$
I^{\CY}_0(q)=  1+ \sum_{d=1}^\infty \frac{(5d)!}{(d!)^5} q^d, \quad  
\frac{I^{\CY}_1(q)}{I_0^{\CY}(q) } =  \log q + f^{\CY}(q), 
$$
where 
$$
f^{\CY}(q)= \frac{1}{I_0^{\CY}(q)}\sum_{d=1}^\infty q^d\frac{(5d)!}{(d!)^5}\Big(\sum_{m=1}^{5d}\frac{5}{m} -5\sum_{m=1}^d\frac{1}{m}\Big) \in 
q\bQ[\![q]\!].
$$

$I^{\CY}$ and $J^{\CY}$ are functions of one complex variable $q$ and $Q$ respectively. They take values in the 4-dimensional complex vector space
$$
\cH_{\CY} =\bC z  \oplus \bC H \oplus \bC z^{-1}H^2 \oplus \bC z^{-2}H^3
$$
equipped with the linear symplectic structure  
$$
(z^{1-k}H^k, z^{1-\ell}H^\ell) = 5(-1)^k \delta_{k+\ell,3}.
$$

The following mirror theorem was first proved by Givental \cite{Gi96} and Lian-Liu-Yau \cite{LLY97} and implies
the mirror conjecture on  $D^3F^{\CY}_0(Q)$ predicted by Candelas, de la Ossa, Green, Parkes \cite{CdGP}. 
\begin{theorem}[mirror theorem for the CY threefold $X_5$] \label{CYclosed}
\begin{equation}
J^{\CY}(Q,z) = \frac{I^{\CY}(q,z)}{I^{\CY}_0(q)} \quad \text{under the mirror map}\quad \log Q = \frac{I^{\CY}_1(q)}{I^{\CY}_0(q)}.
\end{equation}
\end{theorem}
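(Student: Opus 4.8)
The plan is to reduce the statement to Givental's formalism for genus-zero GW theory: the generating function $J^{\CY}$ is encoded as a family of points on the Lagrangian cone $\cL_{\CY}$ — the cone in the Givental symplectic vector space modeled on $\cH_{\CY}$ with the pairing displayed above — and the entire genus-zero theory is recovered from this cone. Since $H^2(X_5;\bQ)=\bQ H$ is one-dimensional, the string and divisor equations show that after the substitution $Q=e^t$ the small $J$-function $J^{\CY}(Q,z)$ is the restriction to $\tau=tH$ of the big $J$-function, and by Givental's uniqueness lemma (the characterization of $\cL_{\CY}$ as a cone whose points of the form $z+tH+O(z^{-1})$ are uniquely parametrized by $t$) it is the \emph{unique} point of $\cL_{\CY}$ of this shape. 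Hence the theorem follows from two assertions: (A) $I^{\CY}(q,z)/I^{\CY}_0(q)$ lies on $\cL_{\CY}$; and (B) its $z^0$-coefficient equals $\big(I^{\CY}_1(q)/I^{\CY}_0(q)\big)H$, so that with $t=\log Q := I^{\CY}_1(q)/I^{\CY}_0(q)$ it has precisely the normalized shape above.

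Claim (B) is purely formal: expanding \eqref{eqn:I-CY} gives $I^{\CY}(q,z)/I^{\CY}_0(q)=z+(I^{\CY}_1(q)/I^{\CY}_0(q))H+(I^{\CY}_2(q)/I^{\CY}_0(q))z^{-1}H^2+(I^{\CY}_3(q)/I^{\CY}_0(q))z^{-2}H^3$, so its $z^0$-coefficient is $(I^{\CY}_1(q)/I^{\CY}_0(q))H$; and since $I^{\CY}_1(q)/I^{\CY}_0(q)=\log q+f^{\CY}(q)$ with $f^{\CY}(q)\in q\bQ[\![q]\!]$, the relation $Q=q\exp f^{\CY}(q)=q+O(q^2)$ is an invertible change of variable, the mirror map. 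Granting (A), Givental's uniqueness lemma then forces $J^{\CY}(Q,z)=I^{\CY}(q,z)/I^{\CY}_0(q)$ under this map, and comparing the $z^{-1}H^2$-coefficients yields the Candelas--de la Ossa--Green--Parkes formula for $D^3F^{\CY}_0$.

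Claim (A) is the substantive part, and I would establish it by quantum Lefschetz (Givental's approach) or equivalently the mirror principle (Lian--Liu--Yau). Realize $X_5$ as the zero locus of a section of $\cO_{\bP^4}(5)$ and consider the $\bC^*$-equivariant GW theory of $\bP^4$ twisted by this bundle and the inverse equivariant Euler class, the torus scaling the fibers. Equivariant localization on the graph space $\Mbar_{0,0}(\bP^4\times\bP^1,(d,1))$ shows that the twisted $I$-function $z e^{H\log q/z}\big(1+\sum_{d\ge 1}q^d\prod_{m=0}^{5d}(5H+mz)/\prod_{m=1}^d(H+mz)^5\big)$ lies on the twisted Lagrangian cone: the fixed loci of the $(\bC^*)^5\times\bC^*$-action contribute exactly these hypergeometric factors, and one checks that the result satisfies the recursion (polynomiality) characterization of cone points — this is precisely the computation of \cite{Gi96}, or in the Euler-data language the linking argument of \cite{LLY97}. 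Because $X_5$ is Calabi-Yau the non-equivariant limit of this function exists, and the quantum Lefschetz / $c_{\mathrm{top}}$-twist transformation (which absorbs the extra $m=0$ factor $5H$, the Euler class of $\cO(5)$, via the pushforward along $X_5\hookrightarrow\bP^4$) turns it into exactly $I^{\CY}(q,z)$ of \eqref{eqn:I-CY}, now lying on $\cL_{\CY}$; since $\cL_{\CY}$ is a cone stable under multiplication by units of $\bC[\![q]\!]$, also $I^{\CY}(q,z)/I^{\CY}_0(q)\in\cL_{\CY}$.

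The main obstacle is (A). Within it, the delicate points are: setting up the correct equivariant twisted theory and controlling the non-equivariant limit (which exists thanks to the Calabi-Yau condition $c_1(\bP^4)=5H$ restricting to $0$ on $X_5$); the fixed-point analysis on the graph space together with the verification of Givental's recursion characterization of points of the cone; and the bookkeeping that $e(E_d)=c_{5d+1}(E_d)$ with $\rank E_d=\dim\Mbar_{0,0}(\bP^4,d)$, so that $\int_{[\Mbar_{0,0}(\bP^4,d)]}e(E_d)=N^{\CY}_d$. By contrast (B) and the uniqueness step are formal, so in the paper this reduces to invoking \cite{Gi96, LLY97}, which carry out (A) in full.
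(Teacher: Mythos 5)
Your proposal is correct and matches the paper's treatment: the paper does not reprove this theorem but attributes it to Givental \cite{Gi96} and Lian--Liu--Yau \cite{LLY97}, whose argument is exactly the Lagrangian-cone/quantum-Lefschetz (mirror principle) reduction you outline, with the uniqueness of points of the form $z+tH+O(z^{-1})$ and the mirror map handling the formal part. The paper additionally remarks that the statement also follows from the Ciocan-Fontanine--Kim quasimap $\epsilon$-wall-crossing \cite{CK14, CK20}, but your Givental/LLY route is the one the cited original proofs take.
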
 
Ciocan-Fontanine and Kim \cite{CK14} introduce a family of $J$-functions $J^{\CY}_\epsilon$ where $\epsilon\in \bQ_{>0}$, by $\bC^*$-localization on $\epsilon$-stable quasimap graph spaces. The $I$-function and $J$-function correspond to 
the limits $\epsilon \to 0+$ and $\epsilon \to \infty$, respectively: 
$$
I^{\CY} = J^{\CY, 0+}, \quad J^{\CY} = J^{\CY, \infty}.
$$
Theorem \ref{CYclosed}  is a consequence of their quasimap wall-crossing formula \cite{CK14, CK20}. This is also known as $\epsilon$-wall-crossing. 

\subsubsection{FJRW invariants} 
A genus-$g$, $n$-pointed stable 5-spin curve is a triple $((C,z_1,\ldots, z_n), \cL, \rho)$, where
$(C,z_1,\ldots,z_n)$ is a genus-$g$, $n$-pointed twisted stable curve\footnote{In particular, $C$ is a Deligne-Mumford curve
with at most nodal singularities and is a scheme away from the marked points $z_i$ and nodes.}, $\cL$ is a line
bundle on $C$ which defines a representable morphism from $C$ to $B\bC^*$, and 
$\rho: \cL^{\otimes 5} \to \omega_C^{\log} = \omega_C(z_1+\cdots +z_n)$ is an isomorphism. Given 
$m_1,\ldots,m_n\in \{1,2,3,4\}$, let 
$\Mbar_{g, (m_1,\ldots,m_n)}^{1/5}$ denote the moduli of genus-$g$, $n$-pointed stable 5-spin curves $((C,z_1,\ldots,z_n), \cL,\rho)$
such that $z_i = B\mu_5$ and $L_{z_i} = (T_{z_i} C)^{\otimes m_i}$ for $i=1,\ldots,n$. Then $\Mbar_{g,(m_1,\ldots,m_n)}^{1/5}$ is 
nonempty if and only if
$$
2g-2 + \sum_{i=1}^n(1-m_i) \in 5\bZ
$$
in this case, it is a proper smooth Deligne-Mumford stack of dimension $3g-3+n$.  In particular, 
$$
M_n: =\Mbar_{0, ( \underbrace{2,\ldots,2}_n )}
$$
is nonempty if and only if $(n-3)/5 \in \bZ_{\geq 0}$, and in this case 
there is a rank $(n-3)/5$ vector bundle $E_n$ over
$M_n$ whose fiber over the moduli point $( (C,z_1,\ldots,z_n), \cL,\rho)$ is $H^1(C,\cL)$.  The genus-0, $n$-pointed primary
FJRW invariants of the affine LG model $(W_5, \mu_5)$ are
$$
\theta_n := \begin{cases}
\displaystyle{ \int_{[M_n]} e(E_n)^5,}  & \text{if } \displaystyle{ \frac{n-3}{5}\in \bZ_{\geq 0} }; \\
0, &\text{otherwise}.  
\end{cases} 
$$
Let 
$$
F^{\LG}_0(\tau):= \sum_{n=0}^\infty \theta_n \frac{\tau^n}{n!} = \sum_{\ell=0}^\infty \theta_{3+5\ell} \frac{\tau^{3+5\ell}}{(3+5\ell)!}
$$

\subsubsection{The LG mirror theorem} \label{LGclosed}
The state space $H_{\LG}$ of FJRW theory of $(W_5, \mu_5)$ is isomorphic
to $H^*(X_5;\bC)$ as a graded vector space over $\bC$.  In particular, 
for $k\in \{0,1,2,3\}$, 
$$
H^{2k}_{\LG} \cong H^{2k}(X_5;\bC) =\bC H^k \cong \bC. 
$$
The (small) $J$-function of FJRW theory of $(W_5, \mu_5)$ is
\begin{equation}\label{eq:J-CY}
J^{\LG}(\tau,z) = z \Big(\phi_0 + \sum_{n=1}^\infty \frac{\tau^n}{n!}
\sum_{k=0}^3 \big\langle \frac{\phi_{3-k}}{z-\psi}, \phi_0, 
\underbrace{ \phi_1,\ldots, \phi_1}_{n\text{ times}} \big\rangle_{0,n+2} \phi_k \Big)
\end{equation} 
where $\phi_k$ is a specific $\bC$-basis of $H^{2k}_{\LG}\cong \bC$.
The function $J^{\LG}(\tau,z)$ can be expressed in terms of the generating function  $F_0^{\LG}(\tau)$: 
\begin{equation}\label{eqn:J-F0}
J^{\LG}(\tau,z) = z \phi_0  +  \tau \phi_1 +  \frac{\partial F^{\LG}_0}{\partial \tau}(\tau)  z^{-1}\phi_2 + 
\Big(\tau \frac{\partial F^{\LG}_0}{\partial \tau} -2 F^{\LG}_0(\tau) \Big)     z^{-2} \phi_3.
\end{equation}
The (small) $I$-function $I^{\LG}(t,z)$ is given explicitly by 
\begin{equation}\label{eqn:I-LG}
I^{\LG}(t,z) =z\sum_{d=0}^\infty \frac{t^{d+1}}{d!}\frac{ \Gamma(\frac{d+1}{5})^5 }{ \Gamma( \{ \frac{d+1}{5}\} )^5 }
\frac{\phi_{5 \{ \frac{d}{5}\} }  }{ z^{ 5 \{\frac{d}{5}\} }  }= 
z\sum_{k=0}^3 I_k^{\LG}(t) \frac{\phi_k}{z^k}.
\end{equation}
\begin{eqnarray*}
I^{\LG}_0(t) &=&  t+ \sum_{\ell=1}^\infty \frac{t^{5m+1}}{(5m)!} \frac{\Gamma(\frac{1}{5}+m)^5}{\Gamma(\frac{1}{5})^5},  \\
 \frac{I^{\LG}_1(t)}{I_0^{\LG}(t) } &=&  \frac{t^{2}}{I_{0}^{\LG}(t)}\Big(1 +\sum_{m=1}^\infty \frac{t^{5m}}{(5m+1)!} \frac{\Gamma(\frac{2}{5}+m)^5}{\Gamma(\frac{2}{5})^5}\Big)
= t(1 + f^{\LG}(t) )
\end{eqnarray*}
where $f^{\LG}(t) \in t^5 \bQ[\![t^5 ]\!]$. 

$I^{\LG}(t,z)$ and $J^{\LG}(\tau,z)$ are functions of one (complex) variable,  taking values in the 4-dimensional complex vector space
$$
\cH_{\LG} =\bC z\phi_0 \oplus \bC \phi_1 \oplus \bC z^{-1}\phi_2 \oplus \bC z^{-2}\phi_3
$$
equipped with the linear symplectic structure  
$$
(z^{1-k}\phi_k , z^{1-\ell}\phi_\ell) = (-1)^k \delta_{k+\ell,3}.
$$

The results in \cite{CR10} imply the following mirror theorem, which provides a mirror formula of $F_0^{\LG}(\tau)$. 
\begin{theorem}[mirror theorem for the LG model $(W_5,\mu_5)$]
$$
J^{\LG}(\tau,z) = \frac{I^{\LG}(t,z)}{I^{\LG}_0(t)} \quad \text{under the mirror map}\quad \tau = \frac{I^{\LG}_1(t)}{I^{\LG}_0(t)}. 
$$
\end{theorem}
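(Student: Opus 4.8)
The plan is to realize $I^{\LG}(t,z)/I_0^{\LG}(t)$ as a point on Givental's Lagrangian cone $\cL^{\LG}$ of genus-zero FJRW theory of $(W_5,\mu_5)$, and then to invoke the uniqueness characterization of the small $J$-function. Recall that in Givental's formalism $J^{\LG}(\tau,z)$ is the unique family of the shape $z\phi_0 + \tau\phi_1 + O(z^{-1})$ lying on $\cL^{\LG}$, and conversely that any family $t\mapsto f(t,z)\in\cL^{\LG}$ of that shape equals $J^{\LG}(\tau(t),z)$. From \eqref{eqn:I-LG} and the formulas for $I_0^{\LG}$, $I_1^{\LG}$ above we have
\[
\frac{I^{\LG}(t,z)}{I_0^{\LG}(t)} = z\phi_0 + \frac{I_1^{\LG}(t)}{I_0^{\LG}(t)}\,\phi_1 + \frac{I_2^{\LG}(t)}{I_0^{\LG}(t)}\,\frac{\phi_2}{z} + \frac{I_3^{\LG}(t)}{I_0^{\LG}(t)}\,\frac{\phi_3}{z^2},
\]
with $I_1^{\LG}(t)/I_0^{\LG}(t) = t + O(t^6)$, so the mirror map $\tau = I_1^{\LG}(t)/I_0^{\LG}(t)$ is an honest coordinate change near $t = 0$; once membership in $\cL^{\LG}$ is known, comparing this display with the expression \eqref{eqn:J-F0} of $J^{\LG}$ in terms of $F_0^{\LG}$ gives the claimed mirror formula.

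First I would deduce $I^{\LG}(t,z)/I_0^{\LG}(t)\in\cL^{\LG}$ from the genus-zero LG/CY correspondence of Chiodo-Ruan \cite{CR10}. Both $\cH_{\CY}$ and $\cH_{\LG}$ are four-dimensional, and the components of $I^{\CY}$ (resp. $I^{\LG}$) span the solution space of the degree-four Picard-Fuchs operator of the quintic expanded at the large complex structure point $q = 0$ (resp. at the Landau-Ginzburg point). Chiodo-Ruan construct a degree-preserving, $\bC[z,z^{-1}]$-linear symplectic isomorphism $\mathbb{U}\colon\cH_{\CY}\to\cH_{\LG}$ --- built from the analytic continuation of the hypergeometric series together with $\Gamma$-class normalization factors --- such that $\mathbb{U}(\cL^{\CY}) = \cL^{\LG}$ and such that $\mathbb{U}$ carries the analytic continuation of $I^{\CY}(q,z)$ to $I^{\LG}(t,z)$ under the matching of bases and variables. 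By the CY mirror theorem (Theorem \ref{CYclosed}), $I^{\CY}(q,z)/I_0^{\CY}(q) = J^{\CY}(Q(q),z)\in\cL^{\CY}$; applying $\mathbb{U}$ and continuing to the Landau-Ginzburg point places $I^{\LG}(t,z)/I_0^{\LG}(t)$ on $\cL^{\LG}$. An alternative route, parallel to the $\epsilon$-wall-crossing argument used for the CY phase, is to identify $I^{\LG}$ with the $\epsilon\to 0+$ quasimap/GLSM invariants and $J^{\LG}$ with the $\epsilon\to\infty$ limit in the Landau-Ginzburg chamber of the quintic GLSM, and then to apply quasimap wall-crossing \cite{CK14,CK20} in that chamber, as packaged in \cite{AL23}; this bypasses analytic continuation but needs the FJRW/GLSM wall-crossing machinery in the LG phase.

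The step I expect to be the main obstacle is the precise matching of the two hypergeometric modifications. In the analytic-continuation approach one must check, via Mellin-Barnes integrals and Gamma-function identities, that the factor $\prod_{m=1}^{5d}(5H+mz)/\prod_{m=1}^{d}(H+mz)^5$ of $I^{\CY}$ is transformed into the factor $\Gamma(\tfrac{d+1}{5})^5/\Gamma(\{\tfrac{d+1}{5}\})^5$ of $I^{\LG}$ under the change of variables, while keeping exact track of the powers of $z$, the $\age$-grading shift on $\cH_{\LG}$, and the $\Gamma$-class prefactors that make $\mathbb{U}$ symplectic --- this is exactly the hypergeometric-modularity content of \cite{CR10}. (In the wall-crossing approach, the corresponding obstacle is proving the genus-zero wall-crossing formula for the quintic GLSM in its Landau-Ginzburg chamber.) Everything downstream of membership in $\cL^{\LG}$ --- Birkhoff-factorizing $I^{\LG}/I_0^{\LG}$, recognizing its $\phi_0$- and $\phi_1$-coefficients as $z$ and the mirror map, and matching the $z^{-1}\phi_2$- and $z^{-2}\phi_3$-coefficients against \eqref{eqn:J-F0} --- is routine in Givental's formalism.
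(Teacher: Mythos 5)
The paper does not actually prove this theorem: it records that it is implied by the results of \cite{CR10}, and then notes that it alternatively follows from the $\epsilon$-wall-crossing of Ross--Ruan \cite{RR17}, which interpolates $I^{\LG}=J^{\LG,0+}$ and $J^{\LG}=J^{\LG,\infty}$. Your second route is therefore exactly the paper's, and the downstream steps you describe --- the shape $z\phi_0+\tau\phi_1+O(z^{-1})$ of $I^{\LG}/I_0^{\LG}$, the invertibility of $\tau=t+O(t^6)$, and the comparison of the $z^{-1}\phi_2$ and $z^{-2}\phi_3$ coefficients with \eqref{eqn:J-F0} --- are correct and are indeed the routine part once cone membership is known.

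Your primary route, however, is circular as stated. In \cite{CR10} the identification $\mathbb{U}(\cL^{\CY})=\cL^{\LG}$ is not an input from which the LG mirror theorem is deduced; it is the conclusion, and its proof consists of (a) proving the LG mirror theorem directly --- the narrow-sector genus-zero FJRW theory of $(W_5,\mu_5)$ is realized as a twisted theory over the moduli of $5$-spin curves, and Givental-style quantization together with Tseng's orbifold quantum Riemann--Roch places $I^{\LG}$ on $\cL^{\LG}$ --- then (b) citing the CY mirror theorem, and (c) checking that the linear symplectomorphism defined by analytic continuation of the $I$-functions matches the two ruled cones. Since in this small, $\phi_1$-generated setting the $I$-function and its derivatives sweep out the relevant part of the cone, the statement $\mathbb{U}(\cL^{\CY})=\cL^{\LG}$ is essentially equivalent to the statement $I^{\LG}\in\cL^{\LG}$ that you are trying to prove; you cannot derive the latter from the former without presupposing it. The fix is either to cite the FJRW-side computation of \cite{CR10} (or the wall-crossing of \cite{RR17}) directly for cone membership, as the paper does, or to carry out your alternative $\epsilon$-wall-crossing argument. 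Note also that the Mellin--Barnes/Gamma-function matching you single out as the main obstacle is the content of the LG/CY \emph{correspondence} (comparing the two phases), not of the LG mirror theorem itself.
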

Ross-Ruan \cite{RR17} introduce a family of $J$-functions $J^{\LG,\epsilon}$ where $\epsilon\in \bQ_{>0}$, 
which interpolate the $I$-function and the $J$-function: 
$$
I^{\LG} = J^{\LG, 0+}, \quad J^{\LG} = J^{\LG, \infty}.
$$
Theorem \ref{LGclosed}  follows from their wall-crossing formula relating $J^{\LG, \epsilon_1}$ and $J^{\LG, \epsilon_2}$,
so it can be viewed as a version of $\epsilon$-wall-crossing. 

\subsubsection{Picard-Fuchs equation, analytic continuation, and symplectic transform}
Let $\theta = q\frac{d}{dq}$. Then $\{ I_k^{\CY}(q): k=0,1,2,3\}$ is a basis of the space of solutions to the Picard-Fuchs equation
\begin{equation}\label{eq:iPF}
\theta^4 \omega = 5q(5\theta+1)(5\theta+2)(5\theta+3)(5\theta+4)\omega.
\end{equation} 
Let $\theta_- = t \frac{d}{dt}$. Then $\{ I_k^{\LG}(t): k=0,1,2,3\}$ is a basis of the space of solutions to
\begin{equation}\label{eq:PF-LG}
(\theta_-)^4 \omega = 5^5 t^{-5}(\theta_- -1)(\theta_- - 2)(\theta_- - 3)(\theta_- -3)\omega.
\end{equation} 
Note that \eqref{eq:iPF} and \eqref{eq:PF-LG} are related by a simple change of variable 
$q=t^{-5}$, $\theta_- = -5\theta$.  

The Picard-Fuchs equation \eqref{eq:iPF} is satisfied by periods 
$\int_{\gamma}\Omega$ where $\Omega$ is a holomorphic 3-form on the mirror quintic $\cQ$
and $\gamma\in H_3(\cQ;\bZ)\cong \bZ^4$ \cite{CdGP}. See Section \ref{sec:B-model} for more details.  

The $I$-functions $I^{\CY}(q,z)$ and $I^{\LG}(t,z)$ are related by
change of variable $q=t^{-5}$, analytic continuation, and an isomorphism of complex symplectic vector spaces: 
$$
\phi: \cH_{\CY} \stackrel{\cong}{\lra} \cH_{\LG}. 
$$
The LG/CY correspondence of quintic threefolds is a consequence of {\em global mirror symmetry}
over the one-dimensional complex moduli $\bC_t$ of the mirror quintic $\cQ$, or equivalently the  complexified 
K\"{a}hler moduli of the quintic threefold $X_5$. 

\subsection{Open mirror symmetry}
Open GW invariants are virtual counts of parametrized holomorphic curves in K\"{a}hler manifolds with Lagrangian boundary conditions.
Open GW theory can be viewed as a mathematical theory of A-model topological open strings on K\"{a}hler manifolds.

Open FJRW invariants are virtual counts of sections of orbifold
line bundles over orbifold Riemann surfaces with boundaries, where the sections satisfy certain boundary conditions. Open FJRW theory can be viewed as a mathematical theory of A-model topological open
strings on affine LG models.

\subsubsection{The CY open mirror theorem} \label{sec:CYopen} 
The Fermat quintic $X_5 =\{ W_5(x)=0\}\subset \bP^4$ is defined over $\bR$. Let 
$\RP^4\subset \bP^4$ be the real projective space, and let  $\bR X_5 :=\RP^4\cap X_5$ be the real quintic, which is
diffeomorphic to $\RP^3$. The real quintic $\bR X_5$ is the fixed locus of an anti-holomophic and anti-symplectic involution on the K\"{a}hler manifold $X_5$, so it is a totally real Lagrangian submanifold of $X_5$.  The A-model disk potential $F^{\CY}_{0,1}$ of the pair $(X_5, \bR X_5)$ is 
$$
F^{\CY}_{0,1}(Q) =\sum_{\substack{ d \in \bZ_{> 0} \\ d \text{ odd } }}  N_d^{\text{disk}} Q^{d/2}
$$
where  $N_d^{\text{disk}}\in \bQ$ is the open GW invariant which counts degree $d$ holomorphic disks in $X_5$ bounded by $\bR X_5$. Near the maximal unipotent monodromy (MUM) point $q=0$ in the complex moduli of $\cQ$ (which corresponds to the CY/geometric phase of the K\"{a}hler moduli of $X_5$ under mirror symmetry),  the  B-model disk potential is given explicitly by
\begin{equation}\label{eq:B-model-disk}
\cT^{\CY}(q) = 2 \sum_{\substack{ d\in \bZ_{> 0} \\ d \text{ odd}} }\frac{(5d)!!}{(d!!)^5} q^{d/2} 
\end{equation}
Note that $\cT^{\CY}(q)= 30\sqrt{q}(1+ h(q))$ where $h(q)\in q\bQ[\![q ]\!]$. 

The following mirror formula for the A-model disk potential $F^{\CY}_{0,1}(Q)$ was conjectured by Walcher \cite{Wa07} and proved by
Pandharipande-Solomon-Walcher \cite{PSW08}. 
\begin{theorem}[Open mirror theorem for $(X_5, \bR X_5)$] \label{CYopen} 
$$
F^{\CY}_{0,1}(Q) = \frac{\cT^{\CY}(q)}{I^{\CY}_0(q)} \quad \text{\em under the mirror map} \quad \log Q= \frac{I^{\CY}_1(q)}{I^{\CY}_0(q)}.
$$
\end{theorem}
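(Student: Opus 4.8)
We outline a proof along the lines of Pandharipande--Solomon--Walcher \cite{PSW08}. The plan is to compute the open Gromov--Witten invariants $N_d^{\text{disk}}$ (for $d$ odd) directly by virtual localization and to show that the resulting A-model series, after the change of variables $\log Q = I^{\CY}_1(q)/I^{\CY}_0(q)$ and division by $I^{\CY}_0(q)$, coincides with the explicit hypergeometric series $\cT^{\CY}(q)$ of \eqref{eq:B-model-disk}. First, using Solomon's construction of disk invariants from an anti-symplectic involution, one realizes $N_d^{\text{disk}}$ as a virtual integral over a moduli space of \emph{real} genus-zero stable maps to $\bP^4$ of degree $d$: a holomorphic disk with boundary on $\bR X_5$ doubles across its boundary to a conjugation-invariant rational curve. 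Concretely one works with the conjugation-fixed locus of $\Mbar_{0,1}(\bP^4,d)$ together with a ``real half'' $E_d^{\bR}$ of the obstruction bundle $E_d$ of the closed theory, whose real rank equals the expected dimension, so that $N_d^{\text{disk}} = \int e(E_d^{\bR})$ over this real moduli space.

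Next, pick a one-dimensional $\bC^*$-action on $\bP^4$ compatible with the real structure (for instance a generic one-parameter subgroup of the standard $(\bC^*)^5$-torus together with its conjugation-intertwining) and apply virtual localization in this real setting. The fixed loci are indexed by decorated graphs with a distinguished central component carrying the real structure and mapped to a real torus-invariant line $\RP^1\subset \bR X_5 \cong \RP^3$, with trees of ordinary torus-fixed components attached in complex-conjugate pairs. The contribution of each graph factors into a \emph{disk vertex factor} coming from the central real multiple cover --- which produces products over the \emph{odd} weights and hence the double factorials $(5d)!!$, $(d!!)^5$ in place of $(5d)!$, $(d!)^5$ --- and standard closed \emph{edge/vertex factors} of Atiyah--Bott--Kontsevich type for the conjugate trees, weighted by the restriction of $e(E_d^{\bR})$.

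The localization sum is then assembled into a hypergeometric generating function. As in the proof of Theorem \ref{CYclosed} by Givental \cite{Gi96} and Lian--Liu--Yau \cite{LLY97}, one packages it into a ``disk $I$-function'' and shows, by a Birkhoff-factorization/polynomiality argument, that it agrees with the A-model disk potential after precisely the transformation $\log Q = I^{\CY}_1/I^{\CY}_0$ and normalization by $I^{\CY}_0$; equivalently, one checks that both $\sum_{d\text{ odd}} N_d^{\text{disk}}Q^{d/2}$ and $\cT^{\CY}(q)/I^{\CY}_0(q)$ are annihilated by the extended degree-five Picard--Fuchs operator \eqref{eq:ePF} with the closed solutions $I^{\CY}_k$ as forcing data, and have the same leading term $30\sqrt{Q}$, which forces the identity.

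The main obstacle is the real localization itself: constructing the virtual class and the bundle $E_d^{\bR}$ on the moduli of real maps with a consistent orientation, controlling the boundary and corners of the real locus of $\Mbar_{0,1}(\bP^4,d)$ and the resulting sign subtleties, and --- the technical core --- proving that the central real-cover vertex contributes exactly the double-factorial factor while fixed loci in which the real component degenerates to a point contribute nothing extra. If one argues instead purely through \eqref{eq:ePF}, the difficulty simply migrates to establishing that $F^{\CY}_{0,1}$ satisfies this equation on the A-model side, which is again proved by the same localization, so there is no shortcut. A conceptually cleaner route, developed later in this paper, is to first prove an open/closed correspondence expressing the disk invariants of $(X_5,\bR X_5)$ as closed genus-zero invariants of an auxiliary (extended) GLSM, and then deduce the formula from a mirror theorem for that GLSM.
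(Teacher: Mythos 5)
The paper does not prove this theorem itself; it is quoted from Pandharipande--Solomon--Walcher \cite{PSW08}, and your outline --- doubling disks with boundary on $\bR X_5$ to conjugation-invariant rational curves, real virtual localization with a central component over a real line producing the double-factorial vertex $(5d)!!/(d!!)^5$, and pinning down the answer via the extended Picard--Fuchs equation together with the $30\sqrt{Q}$ leading term --- is essentially the strategy of that reference (building on Solomon's construction \cite{So}). The hard steps you flag (orientations and the real obstruction bundle, degenerate fixed loci, the exact evaluation of the real multiple-cover contribution) are precisely the technical content of \cite{PSW08}, so your sketch is a faithful reduction to the cited proof rather than a new or divergent argument.
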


In the following table, we compare notations in this paper with those in \cite{Wa07} and \cite{PSW08}.
 \begin{center}
 \renewcommand{\arraystretch}{1.3}
 \begin{tabular}{|c|c|c|c|c|}
\hline 
This paper & $q$ & $\cT^{\CY}(q)$ & $Q$ & $F^{\CY}_{0,1}(Q)$ \\ \hline
\cite{Wa07} & $z$ & $30\tau(z)$ & $q=e^{2\pi i t}$ & $\hat{\tau}(q)$ \\  \hline 
\cite{PSW08}  & $e^t$ & $J(t)$ &  $e^T$ & $\cF^{disk}(T)$ \\  \hline
\end{tabular}
\renewcommand{\arraystretch}{1}
\end{center}

\smallskip

\subsubsection{The inhomogeneous and extended Picard-Fuchs equations} 
Let 
\begin{equation}\label{eqn:PF-operator}
\cL  := \theta^4 - 5q (5\theta +1)(5\theta+2)(5\theta+3)(5\theta +4). 
\end{equation}
Then the Picard-Fuch equation \eqref{eq:iPF} can be rewritten as
\begin{equation}
\cL\omega =0
\end{equation}
The B-model disk potential $\cT^{\CY}(q)$ satisfies the inhomogeneous Picard-Fuchs equation
\begin{equation} \label{eqn:inhomogeneous} 
\cL \cT = \frac{15}{8} \sqrt{q}.
\end{equation}
Therefore, 
\begin{equation}
(2\theta-1)\cL\cT =0
\end{equation}
which is equivalent to the extended Picard-Fuchs equation
\begin{equation}\label{eq:ePF}
(2\theta-1)\cL \cT = 5q (2\theta+1)(5\theta +1)(5\theta+2)(5\theta+3)(5\theta +4)\cT.
\end{equation}
The space of solutions to \eqref{eq:ePF} is spanned by $\{ I_k^{\CY}: k=0,1,2,3\}$ and $\cT^{\CY}$. 
Combining Theorem \ref{CYclosed} (closed mirror theorem) and Theorem \ref{CYopen} (open mirror theorem), we obtain a version
of {\em extended} mirror symmetry \cite{Wa08} including both closed string and open string sectors for quintic CY threefolds.

\subsubsection{The LG open mirror conjecture}  \label{sec:LGopen} 
By \cite[Section 3]{Wa07}, the analytic continuation of $\cT^{\CY}(q)$ to the orbifold point $q=\infty$  in the complex moduli of $\cQ$ (which corresponds to the LG phase of the K\"{a}hler moduli of $X_5$  under mirror symmetry) is
\begin{equation} \label{eq:LGCY1}
\cT^{\LG} + \cT_c
\end{equation}
where
\begin{eqnarray*}
\cT^{\LG}  &=& -\frac{2}{3}\sum_{m=0}^\infty \frac{\Gamma(-3/2-5m)}{\Gamma(-3/2)}\frac{\Gamma(1/2)^5}{\Gamma(1/2-m)^5} q^{-(m+1/2)}.  \\
&=& - 2 \sum_{\substack{d >0 \\ d \ \mathrm{odd}} } \frac{(d!!)^5}{d^5(5d-2)!!} q^{-d/2} \\
&=& - 2 \sum_{\ell=0}^\infty \frac{((2\ell+1)!!)^5}{(2\ell+1)^5(10\ell+3)!!} t^{\frac{5}{2} +5\ell},
\end{eqnarray*}
and $\cT_c$ is a linear combination of $\{ I_k^{\LG}(t): k=0,1,2,3\}$
\begin{equation}
  \label{eq:1}
  \begin{aligned}
    \cT_{c} = -\sqrt{-1}\frac{\pi^{3}}{10}\sum_{m=1}^{4}\frac{e^{4\pi\sqrt{-1}m/5}}{\Gamma(1-m/5)^{5}\cos(\pi m/5)} I^{\LG}_{m-1}(t).
  \end{aligned}
\end{equation}
In particular, $\cT^{\LG} = -2/3t^{5/2} + O(t^{5})$.
This leads to the following mirror conjecture (which is implicit in \cite{Wa09}). Let $F_{0,1}^{\LG}(\tau)$ be the (yet-to-be-defined) generating function
of genus-zero open FJRW invariants of $(W_5,\mu_5)$. 
\begin{conjecture}[open mirror conjecture for $(W_5, \mu_5)$]\label{LGopen}
$$
F^{\LG}_{0,1}(\tau) = \frac{\cT^{\LG}(t)}{I^{\LG}_0(t)} \quad \text{\em under the mirror map} \quad \tau = \frac{I^{\LG}_1(t)}{I^{\LG}_0(t)}.
$$
\end{conjecture}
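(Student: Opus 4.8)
The plan is to establish Conjecture~\ref{LGopen} by the same two-step scheme that proves Theorem~\ref{CYopen}, but carried out near the LG/orbifold point $t=0$ (equivalently $q=t^{-5}=\infty$) rather than the MUM point $q=0$: first produce an \emph{open LG mirror theorem} expressing $F^{\LG}_{0,1}$ as an $I$-type disk potential divided by $I^{\LG}_0$ under the LG mirror map, and then identify that $I$-type disk potential with $\cT^{\LG}$. The extended quintic GLSM of Section~\ref{sec:extended} supplies the conceptual glue: $\cT^{\CY}$ is realized there as a GLSM central charge in the geometric phase, so its behavior across the CY/LG wall is governed by the GLSM wall-crossing of~\cite{AL23}, and $\cT^{\LG}$ (plus the closed correction $\cT_c$) is the corresponding central charge in the LG phase.

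\emph{B-model input.} Since $\cT^{\CY}$ satisfies the extended Picard-Fuchs equation~\eqref{eq:ePF} and, by~\eqref{eq:LGCY1}, $\cT^{\LG}+\cT_c$ is (a branch of) its analytic continuation under $q=t^{-5}$, the function $\cT^{\LG}(t)$ satisfies the image of~\eqref{eq:ePF} under $q=t^{-5}$, $\theta_-=-5\theta$ — an order-five equation whose solution space is spanned by $\{I^{\LG}_k:k=0,1,2,3\}$ together with $\cT^{\LG}$. Combined with the normalization $\cT^{\LG}=-\frac{2}{3}\,t^{5/2}+O(t^5)$ recorded after~\eqref{eq:1}, this pins down $\cT^{\LG}$ uniquely modulo the span of the $I^{\LG}_k$; the GLSM wall-crossing provides the complementary statement that this distinguished solution is precisely the analytically continued extended central charge.

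\emph{A-model input.} The substantive work is to \emph{define} $F^{\LG}_{0,1}(\tau)$ through genus-zero open FJRW invariants of $(W_5,\mu_5)$ — a moduli space of bordered orbifold $5$-spin disks with boundary marked points and a real/anti-holomorphic structure, carrying a virtual class and an Euler-type insertion refining $e(E_n)^5$ — building on the open FJRW constructions for $(x_1^5+\cdots+x_5^5,\mu_5)$ in~\cite{TZ1,TZ2} and their $r$-spin precursors~\cite{BCT22,BCT,GKT}. One then introduces $\epsilon$-stabilized disk potentials $\cT^{\LG,\epsilon}$ in the style of Ross-Ruan~\cite{RR17}, proves an open $\epsilon$-wall-crossing relating the limit $\epsilon\to\infty$ (which yields $F^{\LG}_{0,1}$ under the mirror map) and $\epsilon\to 0+$ (which yields a hypergeometric disk potential $\cT^{\LG,0+}$ via the recursive structure of the localization graphs on the open graph space), and checks termwise that $\cT^{\LG,0+}$ equals the series for $\cT^{\LG}$ in~\eqref{eq:LGCY1} after the mirror normalization. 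Combining with the B-model input gives $F^{\LG}_{0,1}(\tau)=\cT^{\LG}(t)/I^{\LG}_0(t)$ under $\tau=I^{\LG}_1(t)/I^{\LG}_0(t)$; the closed piece $\cT_c$, being a combination of the $I^{\LG}_k$, is invisible to open invariants and is absorbed exactly as the homogeneous ambiguity above.

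The main obstacle is the A-model input: making rigorous sense of genus-zero open FJRW theory for the quintic LG model — the moduli of bordered $5$-spin curves, orientations of the relevant (real) obstruction bundle, the boundary strata entering the virtual class, and the insertion playing the role of $e(E_n)^5$ — and then proving the open $\epsilon$-wall-crossing, whose closed counterpart already requires the full machinery of~\cite{RR17}. A secondary subtlety is branch-tracking: \eqref{eq:LGCY1} continues a multivalued function, so the identification $\cT^{\LG,0+}=\cT^{\LG}$ must be made along the branch selected by the GLSM wall-crossing, which is the role played by the extended GLSM here. Once open FJRW invariants are in place and the wall-crossing is proved, the remainder is the same linear-algebra-and-series bookkeeping used for Theorems~\ref{CYclosed}, \ref{CYopen}, and~\ref{LGclosed}.
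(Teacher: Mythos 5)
The statement you are asked to prove is stated in the paper as a \emph{conjecture}, and the paper offers no proof: immediately before Conjecture~\ref{LGopen} the authors describe $F^{\LG}_{0,1}(\tau)$ as the ``yet-to-be-defined'' generating function of genus-zero open FJRW invariants of $(W_5,\mu_5)$. Your proposal correctly identifies the natural strategy (mirror the two-step proof of Theorem~\ref{CYopen} at the orbifold point, using $\epsilon$-wall-crossing \`a la Ross--Ruan on the A-side and the analytic continuation \eqref{eq:LGCY1} together with the extended GLSM wall-crossing on the B-side), but as written it is a research program, not a proof. The decisive gaps are the ones you yourself flag as ``the substantive work'': (i) the left-hand side $F^{\LG}_{0,1}$ has no definition --- without a construction of the moduli of bordered $5$-spin disks, their orientations, boundary strata, and the insertion refining $e(E_n)^5$, the conjecture has no mathematical content to verify; (ii) the open $\epsilon$-wall-crossing relating the $\epsilon\to\infty$ and $\epsilon\to 0+$ disk potentials is asserted, not proved, and its closed analogue already occupies all of \cite{RR17}; (iii) the termwise identification of the hypothetical $\cT^{\LG,0+}$ with the explicit series $\cT^{\LG}=-2\sum_{\ell\ge 0}\frac{((2\ell+1)!!)^5}{(2\ell+1)^5(10\ell+3)!!}\,t^{5/2+5\ell}$ is not carried out.

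A further logical point: your claim that the B-model input ``pins down $\cT^{\LG}$ uniquely modulo the span of the $I^{\LG}_k$,'' with the GLSM wall-crossing supplying ``the complementary statement,'' cannot by itself close the argument, because the homogeneous ambiguity $\cT_c$ is fixed only by matching against an independently defined A-model quantity --- precisely the object that is missing; used as the selection principle for the branch, this is circular. What the paper actually does is different in emphasis: the negative-phase computations in Section~\ref{sec:central-charge-hybrid} and the $I$-function calculation show that the \emph{closed} extended-GLSM generating function $F^-(\tau)=z(J^-_w(\tau,z),\one_3)$ would equal $-\cT^{\LG}(t)/I_0^{\LG}(t)$ under the mirror map \emph{if} Conjecture~\ref{GLSM-J-} holds, thereby reducing Conjecture~\ref{LGopen} to a closed mirror conjecture plus an open/closed correspondence $F^-=-F^{\LG}_{0,1}$; it does not prove either. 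Your write-up should be framed as evidence and a roadmap consistent with the paper's reduction, not as a proof.
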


\section{Gauged Linear Sigma Models} \label{sec:GLSM}
\subsection{Input data}
The input data of a  GLSM is a 5-tuple $(V, G, \bC^{*}_{R}, W, \zeta)$, where:
\begin{enumerate}
  \item (linear space) $V$ is a finite dimensional complex vector space.
  \item (gauge group) $G \subset \GL(V)$ is a complex reductive Lie group.
  \item ($R$ symmetries) $\bC^{*}_{R} \subset \GL(V)$ is a 1-parametric subgroup that commutes with $G$, and 
  $G \cap \bC^{*}_R$ is finite. Therefore $G\cap \bC^*_R \simeq \mu_{r}$ for some $r \in \bZ_{>0}$, where $\mu_r$ is the group of $r$-th roots of unity. 
  \item (superpotential) $W \: : \; V \to \bC$ is a polynomial function that is $G$-invariant and has weight $r$ with respect to the grading defined by
        the action of $\bC^{*}_{R}$ on $V$: $W(t\cdot x) =t^r W(x)$ for $t\in\bC^*_R$ and $x\in V$. 
  \item (stability condition) $\zeta \in \hG:=\Hom(G,\bC^*)$ is a stability condition for the geometric invariant theory (GIT) quotient stack
   $[V \sslash_{\zeta}  G] = [V^{ss}(\zeta)/G]$. 
\end{enumerate}
The GIT quotient stack $\cX_\zeta := [V\sslash_{\zeta} G]$ is a smooth Artin stack with trivial generic stabilizer (since $G$ acts faithfully on $V$). We assume that $V^{ss}(\zeta)=V^s(\zeta)$, so that $\cX_\zeta$ is an orbifold, i.e., a smooth Deligne-Mumford (DM) stack with trivial generic stabilizer.  The GIT quotient variety $X_\zeta:= V\sslash_{\zeta} G = V^{ss}(\zeta)/G$ is a good moduli of the stack $\cX_\zeta$. It is equipped with  a polarization $L_\zeta$ determined by the $G$-character $\zeta$, and is projective over its affinization $V/_{\mathrm{aff}}G := \mathrm{Spec}\left(H^0(V, \cO_V)^G\right)$.  The superpotential
$W \in H^0(V,\cO_V)^G$ defines a regular function  $w: V/_{\mathrm{aff}} G \to \bC$. Let $w_\zeta: \cX_\zeta \to \bC$ denote the pullback of $w$ under  $\cX_\zeta \to X_\zeta \to V/_{\mathrm{aff}} G$. Let 
$$
\cZ_\zeta^w := Z(dw_\zeta) 
$$
be the critical locus of $w_\zeta$. 

The mathematical theory of the GLSM developed by Fan-Jarvis-Ruan \cite{FJR18} can be viewed as a mathematical theory of A-model topological strings on the LG model $(\cX_\zeta, w_\zeta)$. FJRW theory \cite{FJR07} corresponds to the special case where $G$ is a finite abelian group,  $\cX_\zeta = [V/G]$ and $X_\zeta = V/_{\mathrm{aff}}G$.   Fan-Jarvis-Ruan \cite{FJR18} define GLSM invariants in the narrow sector via Kiem-Li cosection localized virtual cycle \cite{KL13},
generalizing construction in \cite{CLL} in the affine case.  See \cite{KL20, CKL} for construction of FJRW invariants in both broad and narrow sectors via cosection localization.  Favero-Kim \cite{FK} define general GLSM invariants via virtual factorization, generalizing construction of Polishchuk-Vaintrob \cite{PV16} in the affine case and construction of Ciocan-Fontanine, Favero, Gu\'{e}r\'{e}, Kim, Shoemaker \cite{CFGKS} in the convex hybrid case. 

Tian-Xu provides a mathematical theory of the GLSM in the geometric phase via symplectic geometry. In 
 \cite{TX17} and \cite{TX}, $V$ is a non-compact K\"{a}hler manifold and $W:V\to \bC$ is a $G$-invariant holomorphic function.
 
\subsection{LG quasimaps} \label{sec:LGquasimaps}
Let $\Gamma\subset \GL(V)$ be the subgroup generated by $G$ and $\bC_R^*$, and let
$$
\bC_\omega^* := \Gamma/G = \bC_R^*/(G\cap \bC_R^*) = \bC_R^*/\mu_r \simeq \bC^*.
$$
We have a short exact sequence of groups
$$
1\to G\lra \Gamma \stackrel{\chi}{\lra}\bC_\omega^*\to 1. 
$$

GLSM invariants are virtual counts of LG quasimaps. Roughly speaking, an LG quasimap to the given GLSM is a  rational map 
from a twisted curve  $(C,z_1,\ldots,z_n)$  to $\cX_\zeta= [V^{ss}_G(\zeta)/\Gamma]$ which extends to a morphism $f:C \to [V/\Gamma]$  such that the diagram
\begin{equation} \label{eqn:LGmap} 
\xymatrix{
 & [V/\Gamma] \ar[d] \\
C \ar[ur]^f \ar[r]^{P \quad} \ar[dr]_{\omega^{\log}_C} & B\Gamma = [\bullet/\Gamma] \ar[d]^{B\chi}  \\
& B\bC^*_\omega 
}
\end{equation}
commutes, and satisfies some stability conditions depending on $\epsilon\in \bQ_{>0}$ and a so-called {\em good lift} $\tilde{\zeta}$
which is  a $\Gamma$-character $\tilde{\zeta}:\Gamma\to \bC^*$ such that $\tilde{\zeta}|_G = \zeta$ and
$V_\Gamma^{ss}(\tilde{\zeta})= V_G(\zeta)$.

For a fixed domain $C$, the upper half of \eqref{eqn:LGmap} is equivalent to a principal $\Gamma$-bundle
$P\to C$ together with a $\Gamma$-equivariant map $\tilde{f}: P\to V$, or equivalently, a principal
$\Gamma$-bundle $P\to C$ together with a section $u$ of the vector bundle
$P\times_\rho V$ associated to the (faithful) representation $\rho: \Gamma\to \GL(V)$. The lower
half of \eqref{eqn:LGmap} says that the line bundle $P\times_{\chi} \bC$ associated
to the $\Gamma$-character $\chi:\Gamma\to \bC^*_\omega$ is isomorphic to  the log canonical line
bundle $\omega^{\log}_C = \omega_C(z_1+\cdots+z_n)$ of the marked curve $(C,z_1,\ldots,z_n)$.  The degree of $f: C\to [V/G]$ is an element in 
$$
H_2([V/\Gamma];\bQ) \simeq H_2(B\Gamma;\bQ)\simeq H_2(BG;\bQ)\oplus H_2(B\bC_\omega^*;\bQ). 
$$
where the first component is $d \in  H_2(BG;\bQ)\simeq \Hom(\hG,\bQ)$ and the second component is $\deg(\omega_C^{\log}) = 2g-2+n \in H_2(\bC_\omega^*;\bQ)=\bQ$. We call $d\in \Hom(\hG,\bQ)$ the degree of the quasimap. 

More precisely, a genus-$g$, $n$-pointed, degree $d$  LG quasimap to the given GLSM is a 4-tuple $((C,z_1,\ldots,z_n), P, \kappa, u)$, where $(C,z_1,\ldots, z_n)$ is a genus-$g$, $n$-pointed twisted curve, $P$ is principal $\Gamma$-bundle on $C$ which defines a representable morphism $C\to B\Gamma$, $\kappa: P\times_{\chi}\bC \stackrel{\simeq}{\lra} \omega^{\log}_C$ is an isomorphism
of line bundles on $C$, and $u$ is a section of the vector bundle $P\times_{\rho} V$ such that the base locus
$$
B:= u^{-1}(P\times_\rho V^{us}_G(\zeta)), 
$$
where $V^{us}_G(\zeta)=V- V^{ss}_G(\zeta)$ is the unstable locus, is a finite set away from nodes and marked
points $z_1,\ldots, z_n$.  

Let $\tilde{\zeta}$ be a good lift, and let $\epsilon \in \bQ_{>0}$. An LG quasimap is $\epsilon$-stable with respect to $\tilde{\zeta}$  if
the $\bQ$-line bundle $\omega_C^{\log}\otimes (P\times_{\tilde{\zeta}}\bC)^{\epsilon}$ is ample
and $\epsilon \ell(b)\leq 1$ for any point $b$ in the base locus, where $\ell(b)$ is the length defined by $\tilde{\zeta}$.  

The GLSM is called abelian if the gauge group $G$ is abelian. In this case,  we may choose coordinates $x_1,\ldots,x_N$ on $V\simeq \bC^N$ such that
both $G$ and $\bC_R^*$ act diagonally on $V$,  by characters $D_i \in \hG$
and  $c_i \in  \widehat{\bC}_R^*=\bZ$, respectively. Then $\Gamma$ acts on $V$ diagonally by characters $\tilde{D}_i\in \widehat{\Gamma}$ with $\tilde{D}_i|_G = D_i$ and $\tilde{D}_i|_{\bC^*_R} = c_i$.  
The $G$-characters $D_i$ are called gauged charges, and $R_i:= 2c_i/r \in \bQ$ are called $R$-charges in 
the physics literature. The GIT quotient stack $\cX_\zeta$ is a smooth toric DM stack \cite{BCS} and its coarse moduli 
space $X_\zeta$ is a semi-projective simplicial toric variety.  In this case $u = (u_1,\ldots, u_N)$ where $u_i$ is a section of the line bundle
$\cL_i := P\times_{\tilde{D}_i}\bC$ associated to the $\Gamma$-character $\tilde{D}_i$, i.e., 
$u_i\in H^0(C,\cL_i)$.  We have
$$
\deg(\cL_i) =\langle D_i, d\rangle + \frac{R_i}{2}(2g-2+n) \in \bQ. 
$$
where $\langle D_i, d\rangle\in \bQ$ is the pairing between $D_i \in \hG$ and $d \in \Hom(\hG,\bQ)= H_2(BG;\bQ)$, and $\displaystyle{ \frac{R_i}{2} }(2g-2+n)$ is the pairing between  $\displaystyle{ \frac{R_i}{2} =\frac{c_i}{r} }\in  H^2(B\bC_\omega^*;\bQ)\simeq\bQ$ and $2g-2+n\in H_2(B\bC_\omega^*;\bQ) \simeq \bQ$.

\subsection{LG loop spaces}
 Orbifold quasimap theory \cite{CCK} can be viewed as a mathematical theory of the GLSM without a superpotential. The $I$-function in orbifold quasimap theory can be obtained by $\bC^*$ localization
on stacky loop spaces. We now assume the gauge group $G$ is abelian, and define LG loop spaces which are analogues of the stacky loop spaces, following \cite{AL23}.

Given any positive integer $a$, let $\bP[a,1]=[(\bC^2\setminus\{0\})/\bC^*]$, where $\bC^*$ acts on $\bC^2$ by weights $(a,1)$. Let
$\infty:= [1,0] \in \bP[a,1]$ be the image of $(1,0)$ under the projection $\bC^2\setminus\{0\} \to \bP[a,1]$.  
Then $\bP[a,1] = \bC\cup \{\infty\}$. The stabilizer of $(1,0)$ is $\mu_a =\{ t\in \bC^*\mid t^a=1\} \subset \bC^*$, so 
$\infty$ is isomorphic to the classifying space $B\mu_a$  of $\mu_a$. 
For any $m\in \bZ$ let $\cO_{\bP[a,1]}(m)$ denote the line bundle over $\bP[a,1]$ with total space
$[ ((\bC^2-0)\times \bC)/\bC^*$ where $\bC^*$ acts by weights $(a,1,m)$.  Define
$$
H^p\Big(\bP^1, \cO\big(\frac{m}{a}\big)\Big) := H^p\big(\bP[a,1], \cO_{\bP[a,1]}(m)\big).
$$  
Note that this is well-defined because the right hand side depends only on $m/a$. 

Given $d\in \Hom(\hG, \bQ)$, define
$$ 
V_d := \bigoplus_{i=1}^N H^0\big(\bP^1,\cO(\langle D_i,d\rangle - \frac{R_i}{2})\big),\quad
W_d := \bigoplus_{i=1}^N H^1\big(\bP^1, \cO(\langle D_i,d\rangle - \frac{R_i}{2})\big),
$$ 
where the $R$-charges $R_i$ are determined by the input data of the abelian GLSM (see the last paragraph
of Section \ref{sec:LGquasimaps}). Let $G$ act on the summand $H^p\big(\bP^1, \cO(\langle D_i, d\rangle -R_i/2) \big)$ by the $G$-character $D_i$. 
The degree $d$ LG loop space $L_d$  and the obstruction bundle $E_d \to L_d$ are 
$$
L_d:= [V_d\sslash_\zeta G] = [ (V_d)^{ss}_G(\zeta)/G], \quad
E_d := [ \left( (V_d)^{ss}_G(\zeta) \times W_d\right)/G],
$$
We define the set of effective classes for the stability condition $\zeta\in\hG$ to be
$$
\Eff_\zeta =\{ d \in \Hom(\hG, \bQ): [V_d\sslash_\zeta G] \text{ is nonempty}\}.
$$
$L_d$ is a smooth toric DM stack (whose generic stabilizer might be non-trivial), and $E_d$ is a toric vector bundle over $L_d$.  The action of $\bC_{\mathsf{q}}^*\simeq \bC^*$ on $\bP^1$ induces a $\bC_{\mathsf{q}}^*$-action on $L_d$, and $E_d$ is a $\bC^*_{\mathsf{q}}$-equivariant vector bundles over $L_d$.  The virtual tangent bundle of $L_d$ is
$$
T^{\vir}_{L_d} = T_{L_d} - E_d. 
$$

There is a map $\ev_\infty: L_d\to I[V/G]$ given by evaluation at $\infty$. Let $L_d^\circ\subset L_d$ be the open substack
which is the preimage of  $I\cX_\zeta = I[V_G^{ss}(\zeta)/G]\subset I[V/G]$. Then $\bC_{\mathsf{q}}^*$ acts on $L_d^\circ$.
We have a disjoint union of connected components: 
$$
I\cX_{\zeta} =\bigsqcup_{v\in \mathrm{Box}_\zeta} \cX_{\zeta, v}
$$
where $\mathrm{Box}_\zeta$ is a finite set defined in \cite{BCS}.  Each element $v\in \mathrm{Box}_\zeta$ corresponds to a unique element $g(v)\in G$ such that
 $V_G^{ss}(\zeta)^{g(v)} :=\{ x \in V^{ss}_G(\zeta): g(v) \cdot x = x \}$ is non-empty, and 
 $$
 \cX_{\zeta,v} = [V^{ss}_G(\zeta)^{g(v)}/G].
 $$
 Let $\inv: \mathrm{Box}_\zeta \to \mathrm{Box}_\zeta$ be the involution characterized by $g(\inv(v))= g(v)^{-1}$. 
 There is an involution $\inv_R: I\cX_\zeta \to I\cX_\zeta$ such that
$$
\inv_R (\cX_{\zeta,v}) = \cX_{\zeta,\inv(v)}, \quad  w_{\zeta,v}\circ \inv_R = -w_{\zeta,\inv(v)}. 
$$
We have
$$
\ev_\infty: L^\circ_d \to \cX_{\zeta, \inv(v(d)) }
$$
where $v(d) \in \mathrm{Box}_\zeta$ is determined by $d\in \Eff_\zeta$. 
Let 
$\cF_d := (L_d^\circ)^{\bC_{\mathsf{q}}^*}$ which is a connected component of $L_d^{\bC^*_{\mathsf{q}}}$. 
% When $a=1$, the domain is $(\bP^1, \infty)$ and we have the following communative diagram
% $$
% \xymatrix{
% \bC^*\times \{0\} \ar[r] \ar[d] &  \bC^2-0 \ar[r]^{\tilde{f}} \ar[d] &[V/G] \ar[d]\\
% [1,0] \ar[r] & \bP^1 \ar[r]^f & [V/\Gamma]
% }
% $$
% where $\bC^2-0$ is the total space of the frame bundle of $\omega_{\bP^1}(\infty) =\cO_{\bP^1}(-1)$. 
% The evaluation map is given by $\tilde{f}(1,0)\in [V/G]$.  A point in $\cF_d$ is represented by $f$  with $\tilde{f}(1,0) \in [V^{ss}/G]$. 
Since $\bC_{\mathsf{q}}^*$ acts trivially on $\cF_d$, it acts linearly on the fibers of any $\bC_{\mathsf{q}}^*$-equivariant vector bundle  on $\cF_d$.  If $V$ is a $\bC^*_{\mathsf{q}}$-equivariant vector bundle over $\cF_d$ then
$$
V = \bigoplus_{k \in \bQ}  V_k = V^f \oplus V^m,
$$
where $V_k$ is the subbundle on which $\bC_{\mathsf{q}}^*$ acts by weight $k$, and $V^m =\displaystyle{ \bigoplus_{k \neq 0} V_k}$  (resp. $V^f=V_0$)  is the moving (resp. fixed) part of $V$ under the $\bC^*_{\mathsf{q}}$-action.  Let 
$$
T^1_d:= T_{L_d}\big|_{\cF_d}, \quad T^2_d:= E_d\big|_{\cF_d}. 
$$
Then $T_d^{1,f}=T_{\cF_d}$ is the tangent bundle of $\cF_d$, and $T_d^{1,m} = N_{\cF_d/L_d}$ is the normal bundle of $\cF_d$ in $L_d$.
We also have $T_d^{2,f}=0$ and $T_d^{2,m} = E_d\big|_{\cF_d}$. 
The virtual tangent bundle of $\cF_d$ is 
$$
T^\vir_{\cF_d} = T^{1,f}_d - T^{2,f}_d = T_{\cF_d} \in K_{\bC_{\mathsf{q}}^*}(\cF_d), 
$$
and the virtual normal bundle of $\cF_d$ is
$$
N^\vir_d = T^{1,m}_d - T^{2,f}_d = N_{\cF_d/L_d} - E_d\big|_{\cF_d} \in K_{\bC_{\mathsf{q}}^*}(\cF_d). 
$$

\subsection{Virtual factorizations and virtual fundamental classes} Given any effective class $d\in \Eff_\zeta$,
$\ev_\infty: L_d^\circ \to \cX_{\zeta, \inv(v(d))}$ restricts to a closed toric embedding 
$\iota_{d\to  \inv(v(d))}: \cF_d = (L_d^\circ)^{\bC_{\mathsf{q}}^*} \to \cX_{\zeta,  \inv(v(d)) }$. In the remainder of this subsection, we fix an effective class $d\in\Eff_\zeta$ and let $v=\inv(v(d))$. 

Let $g(v)\in G$ be the element corresponding to $v\in \mathrm{Box}_\zeta$.  There exists a $\Gamma$-invariant subspace $H\subset V^{g(v)}$ such that
$$
\cF_d = [H   \sslash_\zeta G]  \subset \cX_{\zeta,v} = [V^{g(v)}\sslash_\zeta G].
$$
More explicitly, 
$$
H= \Spec\bC[y_1,\ldots,y_{n_+}] \simeq \bC^{n_+} \subset V^{g(v)} = \Spec\bC[y_1,\ldots, y_{n_+}, p_1,\ldots, p_{n_-}] 
\simeq \bC^{n_++n_-}, 
$$
where $\{ y_1,\ldots, y_{n_+}, p_1,\ldots, p_{n_-}\} \subset \{x_1,\ldots, x_N\}$. $\cF_d$ is the zero locus of the section 
\begin{equation}\label{eqn:section}
\beta_d = (p_1,\ldots, p_{n_-}) \in H^0(\cX_{\zeta,v}, B_d)
\end{equation}
where $B_d$ is a  toric vector bundle over $\cX_{\zeta,v}$ with fiber $\Spec\bC[p_1,\ldots, p_{n_-}] \simeq \bC^{n_-}$. 
One can show that $\iota_{d\to v}^*w_{\zeta,v} =0$, which implies $W|_H=0$. Therefore,  $W_v:= W|_{V^{g(v)}} 
\in \bC[y_1,\ldots, y_{n_+}, p_1,\ldots, p_{n_-}]$ is in the ideal
generated by $p_1,\ldots, p_{n_-}$:
$$
W_v(y,p) =\sum_{i=1}^{n_-} W_i(y,p) p_i.
$$
\begin{equation}\label{eqn:cosection}
\alpha^W_d :=  (-W_1,\ldots, -W_{n_-})  \in H^0(\cX_{\zeta,v}, B_d^\vee)
\end{equation} 
is a section of $B_d^\vee$ (the dual of $B_d$) which can also be viewed as a cosection of $B_d$, i.e., 
$\alpha^W_d: B_d \to \cO_{\cX_{\zeta,v}}$. We have
$$
\langle \alpha^W_d,\beta_d \rangle = -w_{\zeta,v} \in H^0\left(\cX_{\zeta,v}, \cO_{\cX_{\zeta,v} }\right) 
$$
and 
$$
\cZ^w_d := Z(dw_{\zeta,v}) \cap \cF_d = Z(\alpha^W_d)\cap Z(\beta_d). 
$$
In this case, the Favero-Kim  virtual factorization is the following Koszul matrix factorization
\begin{equation}  \label{eq:VMF}
\{\alpha_d^W, \beta_d \} = \Big[
\xymatrix{
\bigoplus_{i} \Lambda^{2i} B_d^\vee \ar@/{ }^{1pc}/[r]^{ \partial = i_{\beta_d} +\alpha_d^W \wedge}       &  \bigoplus_{i} \Lambda^{2i+1} B_d^\vee \ar@/{ }^{1pc}/[l]^\partial
}
\Big]
\end{equation}
where $i_{\beta_d}$ is the interior product with $\beta_d$, and $\alpha^W_d\wedge$ is the wedge product with $\alpha_d$.
\begin{itemize}
\item The cosection $\alpha_d^W$ depends on the superpotential $W_v: V^{g(v)}\to \bC$ but the
section $\beta_d$ does not. The Koszul matrix factorization $\{\alpha_d^W, \beta_d\}$ is
regular in the sense that $\beta_d$ is a regular section of $B_d$. 
\item The cosection $\alpha_d^W$ is not unique, but different choices define quasi-isomorphic
Koszul matrix factorizations. Moreover, $\iota^* B_d = N_{\cF_d/\cX_{\zeta,v}}$ is the normal bundle of the
closed regular embedding $\iota := \iota_{d\to v}: \cF_d\to \cX_{\zeta,v}$, and 
$$
\iota^* \alpha^W_d =  \Big(-\frac{\partial W}{\partial p_1}(y,0),\cdots, -\frac{\partial W}{\partial p_{n_-}}(y,0) \Big)  
\in H^0\big(\cF_d, N^\vee_{\cF_d/\cX_{\zeta,v}}\big)
$$
is independent of choice of $\alpha_d^W$. 
\end{itemize}  

Let
$$
n_d := n_- = \rank_\bC B_d =  \dim_{\bC} \cX_{\zeta,v} -\dim_\bC \cF_d
$$
be the codimension of $\cF_d$ in $\cX_{\zeta,v}$. The Favero-Kim virtual fundamental class is
$$
F_d^w := \tdch^{\cX_{\zeta,v}}_{\cZ^w_d} \{ \alpha_d^W, \beta_d\} \in 
\bH^{2n_d}_{\cZ^w_d}\Big(\cX_{\zeta,v}, (\Omega^\bullet_{\cX_{\zeta,v}}, -dw_{\zeta,v}\wedge)\Big)
$$
which is a class in the hypercohomology group of the twisted Hodge complex $(\Omega^\bullet_{\cX_{\zeta,v}}, -dw_{\zeta,v}\wedge)$.  The precise definition of $\tdch$ (which is quite involved) can be found in \cite{FK}. 
In particular, when $w_{\zeta,v}=0$, $\{ \alpha_d^W, \beta_d\} = \{0, \beta_d\}$ is 
the Koszul complex of the closed regular embedding $\iota:\cF_d\to \cX_{\zeta,v}$, 
$\cZ_d^w =\cF_d$, and
$$
F_d^w = \tdch^{\cX_{\zeta,v}}_{\cF_d} \{ 0, \beta_d\} 
\in \bH^{2n_d}_{\cF_d}\Big(\cX_{\zeta,v}, (\Omega^\bullet_{\cX_{\zeta,v}}, 0)\Big). 
$$ 
We also define
$$
F_d := \iota_*1  =\tau  \in H^{2n_d}_{\cF_d}(\cX_{\zeta,v}) := H^{2n_d}_{\cF^{an}_d}(\cX^{an}_{\zeta,v}, \underline{\bC})
$$
where $\underline{\bC}$ is the constant sheaf on $\cX_{\zeta,v}^{an}$ associated to $\bC$, and   $\tau$ is the Thom class of the closed regular embedding $\iota:\cF_d\to \cX_{\zeta,v}$.  Recall that 
the Gysin map 
$$
\iota_*: H^k(\cF_d) \to H^{k+2n_d}_{\cF_d}(\cX_{\zeta,v}) 
$$
is an isomorphism of $\bC$-linear spaces.  For any $a\in H^k(\cF)$, 
$$
\iota^*\iota_* a = e(N_{\cF_d/\cX_{\zeta,v}})\cup a
$$
where $e(N_{\cF_d/\cX_{\zeta,v}}) = c_{2n_d}(N_{\cF_d/\cX_{\zeta,v} })$ is the Euler class (and the top Chern class) of
the normal bundle $N_{\cF_d/\cX_{\zeta,v}}$.

\subsection{Projection formulas} 
The class $F^w_d$ satisfies the following property. Let
$$
a \in \bH^k_{\cY}\Big(\cX_{\zeta,v},  (\Omega^\bullet_{\cX_{\zeta,v}}, dw_{\zeta,v}\wedge) \Big)
$$
where $\cY$ is a closed substack of $\cX_{\zeta,v}$. Then
$$
\iota^* a  \in  \bH^k_{\cZ^w_d \cap \cY}\Big(\cF_d,  (\Omega^\bullet_{\cF_d}, 0) \Big),\quad
F^w_d \wedge a \in \bH^{2n_d+k}_{\cZ^w_d\cap \cY} \Big(\cX_{\zeta,v},  (\Omega^\bullet_{\cX_{\zeta,v}},0 ) \Big). 
$$
By the proof of \cite[Proposition 4.10]{FK},  if $\cZ^w_d\cap \cY$ is proper over $\bC$ then
\begin{equation}\label{eqn:projection-hyper}
\int_{\cX_{\zeta,v}} F^w_d \wedge a  = \int_{\cF_d} \iota^* a.
\end{equation}
which is zero unless $k=2\dim_{\bC} \cF_d \Leftrightarrow 2n_d+k = 2\dim_{\bC}\cX_{\zeta,v}$.

Similarly, let
$$
a \in H^k_{\cY}(\cX_{\zeta,v})
$$
where $\cY$ is a closed substack of $\cX_{\zeta,v}$. Then
$$
\iota^* a  \in  H^k_{\cZ^w_d \cap \cY} (\cF_d),\quad
F^w_d \wedge a \in H^{2n_d+k}_{\cZ^w_d\cap \cY}(\cX_{\zeta,v}) 
$$
If $\cZ^w_d\cap \cY$ is proper over $\bC$ then
\begin{equation}\label{eqn:projection}
\int_{\cX_{\zeta,v}} F_d \wedge a  = \int_{\cF_d} \iota^* a.
\end{equation}
The left hand side of \eqref{eqn:projection-hyper} (resp. \eqref{eqn:projection}) depends
only on the class of $F_d^w\wedge a$ (resp. $F_d\wedge a$) in $H^{2n_d +k}_c(\cX_{\zeta,v})$. 
The right hand side of \eqref{eqn:projection-hyper} (resp. \eqref{eqn:projection}) depends
only on the class of $\iota^*a$  in $H^k_c(\cF_d)$.

\subsection{$I$-functions}  We first introduce some notation. Let $\kappa:= \dim_{\bC} G$. 
Let $\{ \xi_1^*,\ldots, \xi_\kappa^* \}$ be a $\bZ$-basis  of $\hG\simeq \bZ^\kappa$. Introduce Novikov variables $y=(y_1,\ldots, y_{\kappa})$, 
and define
$$
y^d  := \prod_{a=1}^\kappa y_a^{\langle \xi_a^*, d\rangle}, \quad d\in \Eff_\zeta.
$$
Let $P_a \in \Pic(\cX_\zeta)$ be the line bundle associated to the $G$-character $\xi_a^*$, and let $p_a := - c_1(P_a) \in H^2(\cX_\zeta;\bZ)$. 
Let  $\iota_v: \cX_{\zeta,v}\to \cX_\zeta$ be the inclusion.
We define the GLSM $I$-function of the abelian GLSM $(V, G, \bC_R^*, W, \zeta)$ to be
$$
I_w = z \sum_{v\in \mathrm{Box}_\zeta} I_{w,v}, \quad
I_{w,v}=  e^{ \sum_{a=1}^\kappa (\log y_a)  \iota_v^* p_a/z} \sum_{\substack{ d\in \Eff_\zeta \\ v(d)=v} } y^d \frac{ (\inv_R)_*F_d^w}{e_{\bC_{\mathsf{q}}^*}(\tN^\vir_d)}
$$
where $\tN^\vir_d \in K_{\bC_{\mathsf{q}}^*}(\cX_{\zeta,v})$ is a natural extension of $N^\vir_d \in K_{\bC_{\mathsf{q}}^*}(\cF_d)$, $e_{\bC^*_q}(\tN^\vir_d)$
is the $\bC^*_q$-equivariant Euler class of $\tN^\vir_d$, and  
$$
(\inv_R)_* F_d^w \in \bH^{2n_d}_{\inv_R(\cZ_d^w)}\Big(\cX_{\zeta,v(d)}, (\Omega_{\cX_{\zeta,v(d)}}^\bullet, w_{\zeta,v(d)}\wedge)\Big).
$$ 
In particular, when $W=0$, we obtain the following GLSM $I$-function of $(V, G, \bC_R^*, 0, \zeta)$:
$$
I_0 = z \sum_{v\in \mathrm{Box}_\zeta} I_{0,v}, \quad
I_{0,v}=  e^{ \sum_{a=1}^\kappa (\log y_a)  \iota_v^* p_a/z} \sum_{\substack{ d\in \Eff_\zeta \\ v(d)=v} } y^d \frac{ (\inv_R)_* F_d^0}{e_{\bC_{\mathsf{q}}^*}(\tN^\vir_d)}
$$
where
$$
(\inv_R)_*F_d^0 \in \bH^{2n_d}_{\inv_R(\cF_d)} \Big(\cX_{\zeta,v}, (\Omega_{\cX_{\zeta,v}}^\bullet, 0)\Big).
$$
When $W=0$, we also define the following non-equivariant $I$-function of $(V,G,\bC_R^*,0, \zeta)$:
$$
I = z \sum_{v\in \mathrm{Box}_\zeta} I_v, \quad
I_v =  e^{ \sum_a \log y_a  \iota_v^* p_a/z} \sum_{\substack{ d\in \Eff_\zeta \\ v(d)=v} } y^d \frac{(\inv_R)_*F_d}{e_{\bC_{\mathsf{q}}^*}(\tN^\vir_d)}
$$
where 
$$
F_d = (\iota_{d\to \inv(v(d)) })_*1  \in H^{2n_d}_{\cF_d}(\cX_{\zeta,  \inv(v(d)) }),
\quad  (\inv_R)_*F_d \in H^{2n_d}_{\inv_R(\cF_d)} (\cX_{\zeta,v(d)}). 
$$

\subsection{Matrix factorizations and quasimap central charges} \label{sec:MF} 
Following \cite{PV11}, a {\em matrix factorization} for $(\cX_\zeta,w_\zeta)$ is a pair
$$
(E, \delta_E) = \Big( \xymatrix{ E_0 \ar@<0.6ex>[r]^{\delta_0} & E_1 \ar@<0.6ex>[l]^{\delta_1} } \Big) 
$$
where $E_0$ and $E_1$ are vector bundles (locally free sheaves of $\cO_{\cX_\zeta}$-modules) of finite rank on $\cX_\zeta$,  and 
$$
\delta_1 \circ \delta_0 = w_\zeta \cdot \id_{E_0}, \quad \delta_0\circ \delta_1 = w_\zeta \cdot \id_{E_1}.
$$

Let $\MF(\cX_\zeta, w_\zeta)$ be the dg-category of matrix factorizations of $w_\zeta$ for $\cX_\zeta$. 
There is a well-defined forgetful map 
\begin{equation}\label{eq:MFtoK-zeta} 
\forget: K(\MF(\cX_\zeta,w_\zeta)) \lra K(\cX_\zeta)  = K(\mathrm{Perf}(\cX_\zeta)) 
\end{equation} 
sending a matrix factorization $(E=E_0\oplus E_1, \delta_E)$ to $[E_0]-[E_1]$. 

In \cite{AL23} we consider various versions of category of B-branes. We
define the quasimap central charge of a B-brane in terms of suitable versions of $I$-function, Iritani's Gamma class,  and Chern character of
the K-theory class of the B-brane. In this paper we will  consider the following two versions.  

\begin{definition}[GLSM quasimap central charges of matrix factorizations] \label{central-charge-MF}
Suppose that $\cZ_d^w = \Crit(w_{\zeta, v(d)}) \cap \cF_d$ is proper for all $d\in \Eff_\zeta$. 
Given a matrix factorization $\fB\in \MF(\cX_\zeta, w_\zeta)$,  let $[\fB] \in K(\MF(\cX_\zeta, w_\zeta))$ be
its K-theory class. The GLSM quasimap central charge of $\fB$ is defined to be
\begin{equation}\label{eq:GLSM-central-charge}
Z_w(\fB) = Z_w([\fB]):= (I_w(y, -1), \hat{\Gamma} \ch_w([\fB]))
\end{equation}
which is a $\bC$-valued function in $y$,
where $\hat{\Gamma}$ is a version of Iritani's Gamma class defined in \cite{AL23} and 
$$
\ch_w([\fB]) \in \bigoplus_{v\in \mathrm{Box}_\zeta} \bH^{\even}\left(\cX_{\zeta,v}, (\Omega^*_{\cX_{\zeta,v}}, dw_{\zeta,v}\wedge)\right).
$$
is the Chern character defined in \cite{CKS}.
\end{definition}
 
\begin{remark}
We use the sign convention of~\cite{FK} in the definition of the Chern character  which is the opposite of the sign convention in~\cite{CKS}.
\end{remark}

\begin{definition}[non-equivariant quasimap central charges] \label{central-charge-Perf} 
Suppose that $\cF_d$ is proper for all $d\in \Eff_\zeta$. The non-equivariant quasimap central charge of $F \in K(\cX_\zeta)$ is defined to be
\begin{equation}\label{eq:non-equivariant-central-charge} 
Z(F):= ( I(y,-1), \hat{\Gamma}\ch(F))
\end{equation}
where 
$$
\ch(F) \in \bigoplus_{v\in \mathrm{Box}_\zeta} H^*(\cX_{\zeta,v}) 
$$
is the combinatorial Chern character defined in \cite[Section 5]{BH}. 
\end{definition}

\begin{remark}
When there is a good lift $\tilde{\zeta} \in \widehat{\Gamma}\otimes \bQ$ of $\zeta \in \hG\otimes \bQ$ , one may define 
$J^\epsilon$-functions $J^\epsilon_w$ and $J^\epsilon$    
using $\epsilon$-stable LG quasimap graph spaces,  and 
define $\epsilon$-dependent quasimap central charges $Z^\epsilon_w$ and $Z^\epsilon$ 
using $J^\epsilon_w$ and $J^\epsilon$ (instead of the $I$-functions $I_w$ and $I$), respectively. 
Under the semi-positive assumption, the GLSM  $J$-function $J_w=J_w^\infty$ (resp. the non-equivariant $J$-function $J = J^\infty$) and the GLSM $I$-function 
$I_w=J^{0+}$ (resp. the non-equivariant $I$-function $I=I^{0+}$) should be related by a mirror map ($\epsilon$-wall-crossing). The central charges of
a compact semi-positive toric orbifold \cite{Ir09, Fa20} can be expressed in terms of the $J$-function of the compact orbifold  and is the analogue of $Z^\infty$. 
\end{remark}

\begin{proposition}[comparison of quasimap central charges]
Suppose that $\cF_d$ is proper for any $d\in \Eff_\zeta$.  For any $\fB\in \MF(\cX_\zeta, w_\zeta)$, 
$$
Z_w(\fB) = Z\left(\forget[\fB] \right)
$$
where $[\fB]\in K(\MF(\cX_\zeta,w_\zeta))$ is the $K$-theory class of $\fB$, and $\mathfrak{forget}$ is 
the natural forgetful map \eqref{eq:MFtoK-zeta}. 
\end{proposition}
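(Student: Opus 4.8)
The plan is to compare $Z_w(\fB)$ and $Z(\forget[\fB])$ term by term in the $I$-function expansions, reducing both---via the projection formulas \eqref{eqn:projection-hyper} and \eqref{eqn:projection}---to integrals over the fixed loci $\cF_d$, on which the superpotential restricts to zero; there the two central charges turn out to be computed by literally the same cohomological data.

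First I would expand $Z_w(\fB)=(I_w(y,-1),\hat\Gamma\,\ch_w([\fB]))$ using $I_w=z\sum_v I_{w,v}$ and the definition of $I_{w,v}$. The pairing on $\bigoplus_v\bH^{\even}(\cX_{\zeta,v},(\Omega^*,\pm dw_{\zeta,v}\wedge))$ couples $\inv_R$-related sectors, so using the isomorphism $\inv_R$ to absorb the pushforward $(\inv_R)_*$ appearing in $I_{w,v}$, each term of $Z_w(\fB)$ becomes $\int_{\cX_{\zeta,\inv(v(d))}}F_d^w\wedge a_d$, where $a_d$ collects the relevant sector-component of $\hat\Gamma\,\ch_w([\fB])$ over $\cX_{\zeta,\inv(v(d))}$, the Novikov--exponential factor of $I_{w,v}$ (evaluated at $z=-1$), and the equivariant inverse Euler class $1/e_{\bC_{\mathsf{q}}^*}(\tN^\vir_d)$. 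Since $\cF_d$ is proper---this is exactly the hypothesis that makes the projection formulas applicable---\eqref{eqn:projection-hyper} gives $\int_{\cX_{\zeta,\inv(v(d))}}F_d^w\wedge a_d=\int_{\cF_d}\iota_d^*a_d$, where $\iota_d:=\iota_{d\to\inv(v(d))}\colon\cF_d\to\cX_{\zeta,\inv(v(d))}$; and because $W|_H=0$ the function $w_{\zeta,\inv(v(d))}$ restricts to $0$ on $\cF_d$, so $\iota_d^*a_d$ lies in $\bH^{\even}(\cF_d,(\Omega^*,0))$, which the Hochschild--Kostant--Rosenberg identification turns into ordinary cohomology $H^{\even}(\cF_d)$ with $\int_{\cF_d}$ the usual integral. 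Running the identical computation for $Z(\forget[\fB])$---now with \eqref{eqn:projection}, with $F_d^w$ replaced by $F_d=(\iota_d)_*1$, and with $\ch_w([\fB])$ replaced by $\ch(\forget[\fB])$---produces $\int_{\cF_d}\iota_d^*a'_d$, where $a'_d$ is obtained from $a_d$ by replacing $\ch_w([\fB])$ with $\ch(\forget[\fB])$; the remaining factors ($\hat\Gamma$, the Novikov--exponential factor, $1/e_{\bC_{\mathsf{q}}^*}(\tN^\vir_d)$) and the embedding $\iota_d$ are literally the same.

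Hence everything reduces to the identity
\[
\iota_d^*\ch_w([\fB])=\iota_d^*\ch(\forget[\fB])\qquad\text{in}\qquad H^{\even}(\cF_d),
\]
where on the left one uses the HKR identification $\bH^{\even}(\cF_d,(\Omega^*,0))\cong H^{\even}(\cF_d)$. I would prove it from three compatibilities of the Chern characters: (i) $\ch_w$ commutes with pullback along $\iota_d$, and since $\iota_d^*dw_{\zeta,\inv(v(d))}=0$ this identifies $\iota_d^*\ch_w([\fB])$ with the $(w=0)$ matrix-factorization Chern character $\ch_0(\iota_d^*[\fB])$ of the restricted factorization; (ii) the forgetful map \eqref{eq:MFtoK-zeta} commutes with pullback, so $\iota_d^*\ch(\forget[\fB])=\ch(\forget_0(\iota_d^*[\fB]))$ for the $(w=0)$ forgetful map $\forget_0$; and (iii) for $w=0$ the matrix-factorization Chern character agrees, under HKR, with the ordinary Chern character of the underlying $\bZ/2$-graded complex, i.e.\ $\ch_0=\ch\circ\forget_0$. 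Fact (iii) holds because $\ch_0$ and $\ch\circ\forget_0$ are built from the same Atiyah/curvature data, and because for a factorization $E_0\xrightarrow{\delta_0}E_1\xrightarrow{\delta_1}E_0$ with $\delta_i\delta_{1-i}=0$ the $K$-theory class $\forget_0([\fB])=[E_0]-[E_1]$ equals the class $[\ker\delta_0/\mathrm{im}\,\delta_1]-[\ker\delta_1/\mathrm{im}\,\delta_0]$ of the cohomology, which is exactly what $\ch_0$ records. Combining (i)--(iii) yields the displayed identity, hence $\int_{\cF_d}\iota_d^*a_d=\int_{\cF_d}\iota_d^*a'_d$; so $Z_w(\fB)$ and $Z(\forget[\fB])$ agree term by term, and summing over $v\in\mathrm{Box}_\zeta$ and $d\in\Eff_\zeta$ (a formal sum in the Novikov variables $y$, so convergence is not at issue) completes the proof.

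I expect the main obstacle to be fact (iii)---the precise statement that the matrix-factorization Chern character of \cite{CKS}, specialized to $w=0$, reduces under HKR to the ordinary Chern character compatibly with the forgetful map \eqref{eq:MFtoK-zeta}---together with the pullback compatibility (i) of $\ch_w$. Both must be read off from the constructions in \cite{CKS}, \cite{FK}, and \cite{BH}, with attention to the sign convention recorded in the Remark following Definition \ref{central-charge-MF}. The rest of the argument---undoing the $(\inv_R)_*$ pushforwards and the sector matching in the pairing, and confirming that the $\hat\Gamma$-, Novikov--exponential-, and equivariant-Euler-class normalizations of $I_w$ and $I$ genuinely coincide after restriction to $\cF_d$---is routine but should be carried out carefully.
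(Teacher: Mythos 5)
The paper states this proposition without proof (it is deferred to \cite{AL23}), so there is no in-text argument to compare against; I can only assess your proposal on its merits. Your skeleton is the intended one and is visibly what the surrounding text is set up for: the two projection formulas \eqref{eqn:projection-hyper} and \eqref{eqn:projection} have \emph{identical} right-hand sides $\int_{\cF_d}\iota^*a$, so after applying them the entire difference between $F_d^w$ and $F_d=\iota_*1$ evaporates, the Gamma-class, Novikov, and $1/e_{\bC_{\mathsf{q}}^*}(\tN^\vir_d)$ factors cancel term by term, and everything reduces to comparing $\iota_d^*\ch_w([\fB])$ with $\iota_d^*\ch(\forget[\fB])$ on $\cF_d$, where $w$ restricts to zero. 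That reduction, including the $\inv_R$ bookkeeping in the pairing and the observation that properness of $\cF_d$ supplies the hypotheses of both projection formulas, is correct.

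The genuine gap is exactly where you suspect it: steps (i) and (iii). Your justification of (iii) --- that $\ch_0$ and $\ch\circ\forget_0$ are ``built from the same Atiyah/curvature data'' and that $[E_0]-[E_1]$ equals the class of the cohomology of the two-periodic complex --- is a heuristic, not an argument, and the second half of it is not even well posed in general (the cohomology sheaves of a two-periodic complex of bundles need not be locally free, and more importantly the CKS Chern character is not defined as a class of cohomology sheaves). The structural obstruction is that the forgetful map \eqref{eq:MFtoK-zeta} is defined only on $K$-theory: for $w\neq 0$ there is no dg-functor $\MF(\cX_\zeta,w_\zeta)\to\Perf(\cX_\zeta)$, so you cannot invoke naturality of Chern characters under functors to get $\iota_d^*\ch_w([\fB])=\ch(\iota_d^*\forget[\fB])$; this compatibility has to be extracted from the explicit construction in \cite{CKS}/\cite{FK}, or verified on a generating set of $K(\MF(\cX_\zeta,w_\zeta))$ (Koszul factorizations $\{a,b\}$ twisted by line bundles, for which both sides are computable in closed form, as in \eqref{eq:chKoszul}). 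Note also that you cannot shortcut this by citing the later assertion that $Z_w(\fB)$ depends only on $\forget[\fB]$ --- that assertion is a consequence of this very proposition. With that one compatibility supplied, your proof is complete.
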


\subsection{Higgs-Coulomb correspondence} \label{sec:HC}

We say a GLSM $(V,G, \bC^*_\bR, W,\zeta)$ is Calabi-Yau if
$G\subset \SL(V)$,  or equivalently, the GIT quotient   $X_\zeta = V\sslash_\zeta G$ is a Calabi-Yau variety. 
When the GLSM is abelian, the Calabi-Yau condition is equivalent to 
\begin{equation}\label{eq:abelian-CY}
\sum_{i=1}^N D_i  =0 
\end{equation}
where $N=\dim V$. 

In this case we will often call our quasimap central charges simply central charges since they correspond 
to (under the mirror map) central charges of D-branes represented by the matrix factorizations
in the physics literature \cite{Ho00}.

One of the main results of~\cite{AL23} is the so-called Higgs-Coulomb correspondence, which means that 
central charges of an abelian, Calabi-Yau GLSM have explicit Mellin-Barnes type integral representations.

Let $\fB \in \MF([V/G], w)= \MF_G(V, W)$ be a $G$-equivariant matrix factorization of $(V, W)$. There is a natural forgetful map
\begin{equation}\label{eq:MFtoK}
\mathfrak{forget}: K(\MF([V/G],w)) \to K([V/G]) = K_G(V) 
\end{equation} 
sending a $G$-equivariant matrix factorization $(E=E_0\oplus E_1, \delta_E)$ to $[E_0]-[E_1]$. 
% We have a natural forgetful map:
% \begin{equation} \label{eq:MFtoK}
%  K(MF([V/G], W)) \stackrel{rest}{\to} K(MF(W^{-1}(0), 0)) \to K([V/G]),
% \end{equation}
% where the first map is restriction and the second one is the pushforward. This map sends $E_{0} \rightleftarrows E_{1}$ to $[E_{0}]-[E_{1}]$.

Let $\fg$ denote the Lie algebra of the gauge group $G$, and let $\bullet$ denote a point.  We have
$$
K_G(V) \simeq K_G(\bullet) = \bZ[P_1^{\pm 1},\ldots, P_\kappa^{\pm 1}], \quad G = \mathrm{Spec}\bC[P_1^{\pm 1},\ldots, P_\kappa^{\pm 1}]  \simeq (\bC^*)^\kappa;
$$ 
$$
H^*_G(V;\bC)\simeq  H^*_G(\bullet;\bC) =\bC[\sigma_1,\ldots, \sigma_\kappa], \quad \fg =\mathrm{Spec}\bC[\sigma_1,\ldots,\sigma_\kappa] \simeq \bC^\kappa.
$$
Given
$$
\bft =\sum_{a=1}^\kappa t_a \xi_a^* \in \hG, 
$$
where $t_a \in \bZ$,  let
$$
\cL_{\bft} = \prod_{a=1}^\kappa P_a^{t_a} \in \Pic([V/G]= \Pic_G(V) 
$$
be the associated $G$-equivariant line bundle on $V$.

Consider the $(2\pi\sqrt{-1})$-modified equivariant Chern character: \begin{equation}
  \label{eq:eqCh}
  \begin{aligned}
    &\mathrm{Ch} \; : \; K([V/G])  \to H^{*}_{an}([V/G]),  \;\;\;
    \mathrm{Ch} = (2\pi\sqrt{-1})^{\deg/2} \circ \ch \\ &\mathrm{Ch}(\cL_{\bf t}) =
     e^{2\pi\sqrt{-1}\langle \bft, \sigma\rangle} = e^{2\pi\sqrt{-1}\sum_{a=1}^\kappa t_a \sigma_a}
  \end{aligned}
\end{equation}
where $H^{*}_{an}([V/G]) \simeq H^{*}_{G, an}(\bullet)$ is the analytic completion of the equivariant cohomology 
$H^*_G(\bullet)$ of a point $\bullet$,
and $\sigma =\sum_{a=1}^\kappa \sigma_a \xi_a  \in \fg$.

Assuming the Calabi-Yau condition \eqref{eq:abelian-CY}, we define the
integral density of the hemisphere partition function to be
\begin{equation}
  \Gamma(\sigma) \cdot \Ch(\forget([\fB])) = \prod_{i=1}^N  \Gamma(\langle D_i,  \sigma\rangle + \frac{R_i}{2}) \cdot
  \Ch(\forget([\fB]))
  \in H^{*}_{an}([V/G]).
\end{equation}
This expression can be treated as either an equivariant cohomology class or just a complex 
analytic function on $\kappa$ variables $\sigma_1,\ldots, \sigma_\kappa$. The (non-equivariant limit of the)
hemisphere partition function of
the matrix factorization $\fB$ is defined to be~\cite{AL23, HR13} an analytic function of
complex variables $\theta_a = \zeta_a + 2\pi\sqrt{-1}B_a$, $1\leq a\leq  \kappa$: 
\begin{equation}
  \label{eq:hemi}
  Z_{D^2}(\fB) := \frac{1}{2\pi \sqrt{-1}}\oint_{\delta + \sqrt{-1}\fg_\bR} \dd \sigma \,
  \Gamma(\sigma) \cdot e^{ \langle \theta, \sigma\rangle} \Ch\left(\forget([\fB])\right),
\end{equation} 
where  $\fg_\bR=\bigoplus_{a=1}^\kappa \bR \xi_a$, and $\delta \in \fg_\bR$ satisfies the condition 
\begin{equation}\label{eq:separate-poles}
\langle D_i, \delta\rangle + \frac{R_i}{2}>0, \quad i=1,\ldots, N.
\end{equation} 
\begin{lemma}
If there exists $\delta \in \fg_\bR$ which satisfies \eqref{eq:separate-poles} then 
$\cF_d$ is proper for any $d\in \Eff_\zeta$ for any stability condition $\zeta \in \hG$ such that
$V_G^{s}(\zeta) = V_G^{ss}(\zeta)$. 
\end{lemma}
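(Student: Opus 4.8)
The plan is to realise each fixed locus $\cF_d$, $d\in\Eff_\zeta$, as a compact toric quotient. Fix $d\in\Eff_\zeta$ and set $v=\inv(v(d))$. As recalled above, $\cF_d=[H\sslash_\zeta G]$ with $H=\Spec\bC[y_1,\dots,y_{n_+}]\cong\bC^{n_+}$, where $\{y_1,\dots,y_{n_+}\}$ is a subset of $\{x_1,\dots,x_N\}$ and $G$ acts on the coordinate $y_i$ through the character $D_i$. Being a closed substack of the orbifold $\cX_{\zeta,v}$, $\cF_d$ is Deligne--Mumford, and its coarse moduli space is the GIT quotient variety $H\sslash_\zeta G$, which is projective over the affinization $\Spec\big(\bC[y_1,\dots,y_{n_+}]^G\big)$. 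Hence $\cF_d$ is proper over $\bC$ once $\bC[y_1,\dots,y_{n_+}]^G=\bC$, i.e. once the only $a=(a_i)\in\bZ_{\ge0}^{n_+}$ with $\sum_i a_iD_i=0$ in $\hG$ is $a=0$.

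To rule out a nontrivial such relation I would use the constraint on the indices occurring in $H$ that comes from the description of $\cF_d$ as a $\bC^*_{\mathsf{q}}$-fixed locus inside the LG loop space $L_d=[V_d\sslash_\zeta G]$: each coordinate $y_i$ of $H$ comes from the summand $H^0\big(\bP^1,\cO(\langle D_i,d\rangle-\tfrac{q_i}{2})\big)$ of $V_d$, which is therefore nonzero, forcing $\langle D_i,d\rangle-\tfrac{q_i}{2}\ge0$. Now set $\ell:=\delta+d\in\fg_\bR$, using the inclusion $\Hom(\hG,\bQ)\subset\fg_\bR$. For every index $i$ occurring in $H$,
$$
\langle D_i,\ell\rangle=\langle D_i,\delta\rangle+\langle D_i,d\rangle>-\frac{q_i}{2}+\frac{q_i}{2}=0,
$$
by \eqref{eq:separate-poles} for the first summand and the constraint just derived for the second. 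Pairing any relation $\sum_i a_iD_i=0$ with all $a_i\ge0$ against $\ell$ then gives $0=\sum_i a_i\langle D_i,\ell\rangle$, which forces $a=0$. Thus $\bC[y_1,\dots,y_{n_+}]^G=\bC$ and $\cF_d$ is proper over $\bC$. Since $d\in\Eff_\zeta$ was arbitrary --- and the estimate used only the class $d$, the stability condition $\zeta$ entering merely through the set $\Eff_\zeta$ and through the requirement $V^s_G(\zeta)=V^{ss}_G(\zeta)$ that makes $\cF_d$ Deligne--Mumford --- the lemma follows.

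The genuinely delicate point is the second paragraph: identifying $H$ and matching the indices it involves with the inequality $\langle D_i,d\rangle-\tfrac{q_i}{2}\ge0$, which requires carefully tracking the twisted sector $g(v)$ and the sign convention introduced by $\inv_R$. The remaining ingredients are formal --- the GIT properness criterion is standard, and the closing step is a one-line pairing argument.
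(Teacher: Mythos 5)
Your argument is correct. The paper states this lemma without proof, so there is nothing to compare it against line by line; but your route --- reduce properness of $\cF_d=[H\sslash_\zeta G]$ to the triviality of the invariant ring $\bC[y_1,\ldots,y_{n_+}]^G$, then kill any nonnegative relation $\sum_i a_iD_i=0$ by pairing against $\ell=\delta+d$ --- is the natural one and is exactly what the hypothesis \eqref{eq:separate-poles} is designed for. The one step you rightly flag as delicate does hold: by construction $\cF_d$ is the distinguished $\bC^*_{\mathsf{q}}$-fixed component of $L_d=[V_d\sslash_\zeta G]$, and $H$ is spanned by precisely those coordinates $x_i$ for which the corresponding monomial section exists, i.e.\ $\langle D_i,d\rangle-\tfrac{q_i}{2}\in\bZ_{\geq 0}$ (one checks this against the paper's examples, e.g.\ $H=\Spec\bC[x_1,\ldots,x_5]$ for $d\in\bZ_{>0}$ and $H=\Spec\bC[u]$ for $d\in\tfrac12+\bZ_{\geq 0}$ in the extended model); this is even stronger than the inequality $\langle D_i,d\rangle\geq \tfrac{q_i}{2}$ you use, and combined with the strict inequality from \eqref{eq:separate-poles} it gives $\langle D_i,\ell\rangle>0$ as claimed. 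The remaining points (invariant monomials span $\bC[H]^G$ for a diagonal abelian action, projectivity over the affinization, and properness of the DM stack from properness of its coarse space given $V^{s}_G(\zeta)=V^{ss}_G(\zeta)$) are standard, as you say.
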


Since $K_{G}(V)$ is generated by line bundles and $Z_{w}(\fB)$ depends only on
$\forget([\fB])$ we can reduce the computation of $Z_{w}(\fB)$ to computations of
$Z(\cL_{\bt})$.
\begin{theorem}[Higgs-Coulomb correspondence~\cite{AL23}]\label{thm:HC} 
The formal power series $Z_{w}(\fB|_{\cX_{\zeta}})$ has a non-zero convergence radius
in the variables $\{y_{a}\}_{a=1}^{\kappa}$ for any $\fB \in \mathrm{MF}([V/G],w)$. Moreover, for any
$\bt \in \hG$ there exist
a non-empty open region $U_{\bt} \subset \fg^{*}_{\bR}$ such that the function 
$$
Z_{D^{2}}(\cL_{\bt}) :=  \frac{1}{2\pi \sqrt{-1}}\oint_{\delta + \sqrt{-1}\fg_\bR} \dd \sigma \,
  \Gamma(\sigma) \cdot e^{ \langle \theta, \sigma\rangle} \Ch(\cL_{\bt})
  $$
is well-defined for $\mathrm{im}(\theta) \in U_{\bt}$.
$$
Z_{D^2}(\cL_{\bt}) = Z(\cL_{\bt}|_{\cX_\zeta})|_{\log y_{a} = -\theta_{a}}
$$
for $\theta$ in a non-empty open region in $\fg^{*}$.
\end{theorem}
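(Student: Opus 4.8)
The plan is to reduce the theorem to the case of a line bundle $\fB=\cL_\bt$, to pin down the convergence domain $U_\bt$ of the Mellin--Barnes integral by a Stirling estimate, and then to establish the identity by deforming the vertical contour to infinity and collecting the iterated residues, which reassemble into the Higgs-branch central charge.

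\emph{Step 1: reduction to line bundles.} Since $G$ acts faithfully and diagonally on $V\simeq\bC^N$ through the characters $D_1,\dots,D_N$, the group $K_G(V)\simeq K_G(\bullet)$ is a free $\bZ$-module generated by the classes $[\cL_\bt]$, $\bt\in\hG$; as $Z_w(\fB)$ depends only on $\forget([\fB])$ — indeed $Z_w(\fB)=Z(\forget[\fB])$ by the comparison of quasi-central charges — the whole statement reduces to $\fB=\cL_\bt$. Writing $\sigma=\sum_a\sigma_a\xi_a\in\fg$, the integrand of $Z_{D^2}(\cL_\bt)$ is the explicit analytic function
$$
\prod_{i=1}^{N}\Gamma\!\left(\langle D_i,\sigma\rangle+\tfrac{q_i}{2}\right)e^{\langle\theta,\sigma\rangle+2\pi\sqrt{-1}\langle\bt,\sigma\rangle},
$$
whose only singularities are simple poles along the real hyperplanes $\{\langle D_i,\sigma\rangle+q_i/2\in\bZ_{\le0}\}$; the hypothesis~\eqref{eq:separate-poles} (which by the Lemma makes every $\cF_d$ proper, so that the right-hand side $Z(\cL_\bt|_{\cX_\zeta})$ is defined) ensures that the contour $\delta+\sqrt{-1}\fg_\bR$ lies strictly on the positive side of every one of these hyperplanes.

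\emph{Step 2: convergence and the region $U_\bt$.} Put $\sigma=\delta+\sqrt{-1}s$, $s\in\fg_\bR$. Stirling's formula gives $|\Gamma(a+\sqrt{-1}b)|=O\big((1+|b|)^{a-1/2}e^{-\pi|b|/2}\big)$, so the modulus of the integrand is bounded by $C(1+|s|)^{M}\exp\!\big(-\tfrac{\pi}{2}\sum_{i}|\langle D_i,s\rangle|-\langle\mathrm{im}(\theta)+2\pi\bt,\,s\rangle\big)$. Because the $D_i$ span $\fg^*_\bR$, the map $s\mapsto\tfrac{\pi}{2}\sum_i|\langle D_i,s\rangle|$ is the support function of the full-dimensional symmetric zonotope $\tfrac{\pi}{2}P$ with $P=\sum_{i=1}^N[-D_i,D_i]$, and the integral converges absolutely and locally uniformly exactly when $\mathrm{im}(\theta)+2\pi\bt\in\tfrac{\pi}{2}\,\mathrm{int}(P)$. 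Hence
$$
U_\bt=\big\{\eta\in\fg^*_\bR\;:\;\eta+2\pi\bt\in\tfrac{\pi}{2}\,\mathrm{int}(P)\big\}
$$
is a nonempty open region and $Z_{D^2}(\cL_\bt)$ is well-defined and holomorphic on the tube $\{\mathrm{im}(\theta)\in U_\bt\}$. The same Stirling bound applied to the degree-$d$ coefficients of $I(y,z)$ — products of ratios of $\Gamma$-values, cf.\ \cite{AL23} — shows that $I$, hence $Z(\cL_\bt|_{\cX_\zeta})$ and $Z_w(\fB|_{\cX_\zeta})$, converges on a polydisc in the $y_a$; alternatively this is forced by the GKZ/Picard--Fuchs system satisfied by $I$. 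This settles the first assertion and the well-definedness claim.

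\emph{Step 3: the identification by residues.} For $\mathrm{im}(\theta)\in U_\bt$ generic I would push $\delta+\sqrt{-1}\fg_\bR$ to infinity in the direction singled out by the stability $\zeta$, where the integrand decays away from the pole hyperplanes; by the multidimensional residue theorem $Z_{D^2}(\cL_\bt)$ becomes the sum of iterated residues at the $0$-dimensional strata of the pole arrangement, the contributing $\kappa$-tuples of hyperplanes being precisely the minimal anticones of the simplicial toric DM stack $\cX_\zeta$. Summing the geometric-type series of residues attached to a fixed anticone produces one hypergeometric summand, which I would match term by term with the Higgs-branch data: the integer parts of $\langle D_i,\sigma\rangle+q_i/2$ at a pole assemble into a class $d\in\Eff_\zeta$, with $e^{\langle\theta,\sigma\rangle}$ becoming the monomial $y^d$ under $\log y_a=-\theta_a$; the fractional parts select the box element $v=\inv(v(d))$ and the twisted sector $\cX_{\zeta,v}$; the finite products of shifted $\Gamma$-values collapse to the localization factor $(\inv_R)_*F_d/e_{\bC^*_{\mathsf{q}}}(\tN^\vir_d)$; and the infinite tails of the $\Gamma$-factors, reassembled via the product formula for $\Gamma$, give Iritani's Gamma class $\hat{\Gamma}$, while the leftover $e^{2\pi\sqrt{-1}\langle\bt,\sigma\rangle}$ gives $\ch(\cL_\bt|_{\cX_\zeta})$. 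Tracking the powers of $2\pi\sqrt{-1}$ and setting $z=-1$, the residue sum is exactly the symplectic pairing $\big(I(y,-1),\hat{\Gamma}\,\ch(\cL_\bt|_{\cX_\zeta})\big)=Z(\cL_\bt|_{\cX_\zeta})$ on the overlap of $\{\mathrm{im}(\theta)\in U_\bt\}$ with the image of the polydisc of convergence under $\log y_a=-\theta_a$; here the Calabi--Yau condition~\eqref{eq:abelian-CY} is what makes both the symplectic pairing on $\cH$ and the Gamma-class reassembly work. Structurally this is the GLSM-with-superpotential version of the Mellin--Barnes computation behind Givental's and Iritani's mirror theorems.

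The main obstacle is Step 3. One must make the multidimensional contour deformation rigorous — absolute convergence of every intermediate integral together with the vanishing of the boundary contributions from the faces at infinity, which is exactly where the linear inequalities defining $U_\bt$ get used repeatedly — and one must establish the combinatorial dictionary precisely: minimal anticones of $\cX_\zeta$ against the fixed loci $\cF_d$, box elements against the twisted sectors $\cX_{\zeta,v}$, and ratios of $\Gamma$-values against the equivariant Euler classes $e_{\bC^*_{\mathsf{q}}}(\tN^\vir_d)$ and the Gamma class, so that the residue sum is literally the pairing defining $Z(\cL_\bt|_{\cX_\zeta})$ and not merely proportional to it. A secondary technical point is to verify that the polydisc of convergence of the series and the tube $\{\mathrm{im}(\theta)\in U_\bt\}$ really do overlap after the substitution $\log y_a=-\theta_a$, so that the claimed identity is an equality of holomorphic functions on a genuine nonempty open set.
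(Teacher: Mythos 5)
A preliminary remark: this theorem is quoted from \cite{AL23} and the present paper contains no proof of it, so there is no in-paper argument to compare against; what follows measures your sketch against what such a proof must contain, and against the Mellin--Barnes/residue argument in the cited reference, which is indeed of the shape you describe. Your Steps 1 and 2 are correct and essentially complete: the reduction to $\cL_{\bt}$ via $K_G(V)\simeq K_G(\bullet)$ and the fact that $Z_w$ factors through $\forget$ is right, and your identification of $U_{\bt}$ as the translate by $-2\pi\bt$ of the open zonotope $\tfrac{\pi}{2}\,\mathrm{int}\big(\sum_i[-D_i,D_i]\big)$ is the correct answer --- after writing $\theta_a=\zeta_a+2\pi\sqrt{-1}B_a$ it becomes $|\langle B+\bt,h\rangle|<\tfrac14\sum_i|\langle D_i,h\rangle|$, which is exactly the grade restriction window in the wall-crossing theorem stated immediately afterwards, a reassuring consistency check. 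One point you should make explicit: the nonzero convergence radius of the Higgs-branch series genuinely uses the Calabi--Yau condition \eqref{eq:abelian-CY} (it is what keeps the Gamma-ratio coefficients of $I$ growing only exponentially in $d$, e.g.\ $(5d)!/(d!)^5\sim 5^{5d}$ for the quintic), not merely ``the same Stirling bound.''

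The genuine gap is Step 3, which you have correctly located yourself: as written it is a program, not a proof, and it is where the entire content of the identity $Z_{D^2}(\cL_{\bt})=Z(\cL_{\bt}|_{\cX_\zeta})|_{\log y_a=-\theta_a}$ lives. ``Push the contour to infinity and apply the multidimensional residue theorem'' is not a routine operation for $\kappa>1$: the pole divisors $\{\langle D_i,\sigma\rangle+q_i/2\in\bZ_{\le 0}\}$ are not in general position and there is no canonical several-variable residue theorem, so one must either iterate one-dimensional contour shifts (ordering the variables compatibly with $\zeta$ and re-verifying absolute convergence and the vanishing of boundary terms at every stage --- this is where the inequalities defining $U_{\bt}$ are actually consumed) or set up a Jeffrey--Kirwan-type residue formalism. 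Likewise the dictionary you announce (lattice points on pole strata $\leftrightarrow$ $d\in\Eff_\zeta$, fractional parts $\leftrightarrow$ box elements, finite Gamma-ratios $\leftrightarrow$ $1/e_{\bC_{\mathsf{q}}^*}(\tN^\vir_d)$, infinite tails $\leftrightarrow$ $\hat{\Gamma}$) must be checked against the specific normalizations of $\tdch$, the $(2\pi\sqrt{-1})^{\deg/2}$ twist in $\Ch$, and the symplectic pairing, because the theorem asserts an equality, not a proportionality; the worked example in Section 4, where the constants $64\pi^3$ and $128\pi^3$ have to come out exactly, shows how unforgiving this bookkeeping is. Until the contour deformation and this matching are carried out, the identity is plausible but not established.
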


Higgs-Coulomb correspondence also allows to look at the Landau-Ginzburg correspondence
from a different perspective. Let $C_{+}, C_{-} \subset \fg^{*}_{\bR}$ be two chambers
of the secondary fan on $\cX_{\zeta}$ adjacent along codimension 1 wall perpendicular to
$h \in \fg_{\bR}$.

Let $\fB \in \MF([V/G], W)$. We say that $\fB$ satisfies the \textit{grade restriction
  rule} with respect to the wall if there exist $B \in \fg^{*}_{\bR}$ such that
\[
  |\langle B+\bt, h \rangle| < \frac14\sum_{i=1}^{N}|\langle D_{i}, h \rangle|
\]
for all characters $\bt \in \hG$ entering $\fB$ nontrivially (underlying $G$-representation $\fB$ contains a representation defined by $\bt$).

\begin{theorem}[Wall-Crossing~\cite{AL23}]
  Let $C_{\pm}, h$ as above and $\zeta_{\pm} \in \mathrm{int}(C_{\pm})$. If
  $\fB$ satisfies grade restriction rule for some $B$, then the functions
  $Z_{w}(\fB|_{\cX_{\zeta_{+}}})|_{\log y_{a} = \theta_{a}}$ and $Z_{w}(\fB|_{\cX_{\zeta_{-}}})|_{\log y_{a} = \theta_{a}}$ are related by
  analytic continuation in $\{\theta_{a}\}$.
\end{theorem}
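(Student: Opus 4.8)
The plan is to deduce the statement from the Higgs--Coulomb correspondence (Theorem~\ref{thm:HC}), by realizing both central charges as germs of a single Mellin--Barnes integral whose domain of holomorphy, under the grade restriction rule, contains the regions attached to $\zeta_+$ and to $\zeta_-$. First I would reduce to line bundles: by the comparison proposition for quasi-central charges, $Z_w(\fB)$ depends only on $\forget([\fB])\in K(\cX_\zeta)$, and $\fB\mapsto Z_w(\fB)$ is additive, so writing $\forget([\fB])=\sum_{\bt}n_\bt[\cL_\bt]$ as a finite integral combination of line bundles attached to characters $\bt\in\hG$ that enter $\fB$ nontrivially, each such $\bt$ satisfies $|\langle B+\bt,h\rangle|<\tfrac14\sum_{i=1}^N|\langle D_i,h\rangle|$ by hypothesis; by linearity it then suffices to treat each summand $Z_w(\cL_\bt|_{\cX_{\zeta_\pm}})$.

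Next I would feed this into Theorem~\ref{thm:HC}: for each sign there is a non-empty open region of $\theta$ on which $Z_w(\cL_\bt|_{\cX_{\zeta_\pm}})\big|_{\log y_a=-\theta_a}$ equals the hemisphere integral
$$
Z_{D^2}(\cL_\bt)=\frac{1}{2\pi\sqrt{-1}}\oint_{\delta+\sqrt{-1}\fg_\bR}\dd\sigma\;\Gamma(\sigma)\cdot e^{\langle\theta,\sigma\rangle}\,\Ch(\cL_\bt),\qquad \Gamma(\sigma)=\prod_{i=1}^N\Gamma\big(\langle D_i,\sigma\rangle+\tfrac{q_i}{2}\big),
$$
with $\delta$ satisfying \eqref{eq:separate-poles}. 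The key claim is that, for a grade-restricted $\cL_\bt$, this integral with a \emph{fixed} admissible $\delta$ converges absolutely for all $\theta$, is independent of the admissible choice of $\delta$, and is therefore an entire function of $\theta$; granting this, the two germs $Z_w(\cL_\bt|_{\cX_{\zeta_+}})$ and $Z_w(\cL_\bt|_{\cX_{\zeta_-}})$ are both germs of $Z_{D^2}(\cL_\bt)$, hence analytic continuations of one another, and summing against $n_\bt$ yields the theorem (the sign in $\log y_a=\pm\theta_a$ being immaterial).

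The estimate behind the claim is the crux. Writing $\sigma=\delta+\sqrt{-1}y$, $y\in\fg_\bR$, and testing along a ray $y=sh$: Stirling gives $|\Gamma(\langle D_i,\delta\rangle+\tfrac{q_i}{2}+\sqrt{-1}s\langle D_i,h\rangle)|\sim C_i|s|^{b_i}e^{-\frac{\pi}{2}|\langle D_i,h\rangle|\,|s|}$; the imaginary part of $\theta$ (equal to $2\pi B$) together with $\Ch(\cL_\bt)=e^{2\pi\sqrt{-1}\langle\bt,\sigma\rangle}$ contributes the magnitude $e^{-2\pi\langle B+\bt,h\rangle\,s}$; and the real part of $\theta$ contributes only an oscillatory phase. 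Hence the integrand decays as $|s|\to\infty$ along \emph{every} direction $h$ exactly when $2\pi|\langle B+\bt,h\rangle|<\frac{\pi}{2}\sum_i|\langle D_i,h\rangle|$ for all $h$, i.e. precisely the grade restriction rule with its coefficient $\tfrac14$. This gives absolute, locally uniform convergence for all $\theta$; moving $\delta$ within the cone $\{\langle D_i,\cdot\rangle+\tfrac{q_i}{2}>0\}$ crosses no pole of $\prod_i\Gamma(\langle D_i,\sigma\rangle+\tfrac{q_i}{2})$ and the same estimate kills the connecting arcs, so the value is $\delta$-independent and the integral is entire in $\theta$.

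The hard part is carrying out this last step rigorously when $\kappa=\dim_\bC G>1$: the Stirling asymptotics must be made uniform over all rays $h\in\fg_\bR$ at once and locally uniform in $\theta$, which forces a careful analysis near the facets of the cone \eqref{eq:separate-poles}, where single $\Gamma$-factors degenerate while the product stays integrable --- and it is exactly this balancing that makes the constant $\tfrac14$ sharp. A related point to nail down is that deforming the contour toward the $\zeta_+$- (resp.\ $\zeta_-$-) direction and collecting residues reproduces the series of the phase $\zeta_+$ (resp.\ $\zeta_-$), indexed by $\Eff_{\zeta_+}$ (resp.\ $\Eff_{\zeta_-}$), so that the germ identified with $Z_{D^2}$ in each region is indeed the central charge of the correct chamber; without the grade restriction rule no single admissible contour is integrable, the two chamber series no longer share a domain of holomorphy, and they differ by an (in general divergent) residue sum rather than by analytic continuation.
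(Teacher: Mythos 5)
This theorem is not proved in the paper at all --- it is imported from~\cite{AL23} --- and your strategy is the one used there: reduce to line bundles via $\forget$ and linearity, identify each chamber central charge with a germ of the hemisphere Mellin--Barnes integral through the Higgs--Coulomb correspondence (Theorem~\ref{thm:HC}), and let the grade restriction rule supply a common domain of holomorphy. Your Stirling computation also correctly locates the origin of the coefficient $\tfrac14$: along $\sigma=\delta+\sqrt{-1}\,s h$ the Gamma product decays like $e^{-\frac{\pi}{2}\sum_i|\langle D_i,h\rangle|\,|s|}$ while $e^{\langle\theta,\sigma\rangle}\Ch(\cL_{\bt})$ contributes $e^{-2\pi\langle B+\bt,h\rangle s}$, so only $\mathrm{im}(\theta)$ enters the convergence condition and the real part may roam freely between the chambers.

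The genuine gap is your ``key claim'' that for a grade-restricted $\cL_{\bt}$ the integral converges absolutely for \emph{all} $\theta$ and is entire. The grade restriction rule in the statement is a condition on the single direction $h$ normal to the wall, whereas absolute convergence of the $\kappa$-dimensional contour integral requires $|\langle B+\bt,h'\rangle|<\tfrac14\sum_{i=1}^N|\langle D_i,h'\rangle|$ for \emph{every} $h'\in\fg_{\bR}$, i.e.\ that $B+\bt$ lie in the interior of the zonotope $\tfrac14\sum_{i=1}^N[-D_i,D_i]$ --- a strictly stronger condition when $\kappa>1$. Under the stated hypothesis the integrand can grow along directions parallel to the wall, so the integral need not define a function of $\theta$ on all of $\fg^*$, only on the tube over $\bigcap_{\bt}\big(-\bt+\mathrm{int}\,\tfrac14\sum_i[-D_i,D_i]\big)$ if that set is nonempty; one must then still check that this tube connects the two open regions furnished by Theorem~\ref{thm:HC} for $\zeta_+$ and $\zeta_-$ across the given wall. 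You flag $\kappa>1$ as ``the hard part'' but misdiagnose it as a uniformity-of-Stirling issue; the real problem is that your hypothesis does not control the wall-parallel directions at all, and the fix is either to strengthen the hypothesis to the full zonotope condition or to perform the continuation one transverse direction at a time as in~\cite{AL23}. For every GLSM actually used in this paper one has $\kappa=1$, the wall normal is the only direction, and your argument closes as written.
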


\section{LG/CY correspondence and wall-crossing in GLSMs} 
\label{sec:LGCY0}

The LG/CY correspondence is a special case of wall-crossing in gauged linear sigma models (GLSMs). 

\subsection{GLSM of the  quintic 3-fold}\label{sec:GLSMquintic} 
Let $V= \Spec\,\bC[x_1,\ldots,x_5,p] \simeq \bC^6$.  The weights of 
the gauge group $G=\bC^*$ and $\bC_R^*$ are listed in the following table. 
\begin{center}
  \begin{tabular}{l|cccccc } 
   & $x_1$ & $x_2$ & $x_3$ & $x_4$ & $x_5$ & $p$ \\ \hline 
    $G$ & $1$ & $1$ & $1$ & $1$ & $1$ & $-5$  \\ \hline
    $\bC^{*}_{R}$ & $0$ & $0$ & $0$ & $0$ & $0$ & $1$
  \end{tabular}
\end{center}
In particular, $G\cap \bC^*_R$ is trivial, and $\bC_\omega = \bC_R^*$. 

Let $x=(x_1,\ldots, x_5)$.  The superpotential is 
 $$
 W(x,p)  = p W_5(x)  =  \sum_{i=1}^5 p x_i^5. 
 $$

 The stability condition $\zeta\in \bZ-\{0\} \subset \bZ = \hG$ is a non-zero integer. 
 \begin{itemize}
 \item In the CY phase $\zeta>0$,  
$$
\cX_\zeta = \left((\bC^5-\{0\})\!\times\!\bC\right)/G = K_{\bP^4}.
$$
The critical locus of $w_\zeta$ is the Fermat quintic Calabi-Yau 3-fold:
$$
\Crit(w_\zeta)= \{ p=W_5(x)=0\} = X_5  \subset \bP^4 = \{p=0\} \subset K_{\bP^4}. 
$$

The GLSM invariants are, up to sign, GW invariants of $X_5$ \cite{CL}. 

\item  In the LG phase $\zeta<0$,   
$$
\cX_\zeta = [ \left(\bC^5 \!\times\! (\bC-\{0\})\right)/\bC^*] = [\bC^5/\mu_5].
$$
The critical locus of $w_\zeta$ is a fat stacky point at the origin, and 
the reduced critical locus is 
$$
\Crit(w_\zeta)_{\mathrm{red}} = [0/\mu_5] = B\mu_5. 
$$
The  GLSM invariants are FJRW invariants of  $(W_5,\mu_5)$.
\end{itemize}

\subsection{$I$-functions  in the CY phase $\zeta>0$}
$$
I\cX_\zeta =\cX_\zeta =K_{\bP^4}, \quad 
\Eff_\zeta = \{d: d\in \bZ_{\geq 0}\}.
$$
$$
V_d =  H^0(\bP^1, \cO_{\bP^1}(d))^{\oplus 5}, \quad W_d = H^1(\bP^1, \cO_{\bP^1}(-5d-1)).
$$
The degree $d$ LG loop space is 
$$
L_d = (V_d- 0)/\bC^* \simeq \bP^{5d+4}.
$$
The obstruction bundle is
$$
 E_d  = ((V_d-0)\times W_d)/\bC^* = \cO_{\bP^{5d+4}} (-5)^{\oplus 5d}. 
$$
For all $d\in \Eff_\zeta =\bZ_{\geq 0}$, 
$$
\cF_d =\bP^4.
$$

We now compute the GLSM $I$-function for the superpotential
$W= p W_5(x)$  and for the zero superpotential $W=0$. 

When $W=pW_5(x)$, the GLSM $I$-function is 
$$
I_w(q,z) = z e^{ (\log q) H/z}  \sum_{d=0}^\infty q^d \frac{\prod_{m=1}^{5d} (-5H-mz)}{\prod_{m=1}^d (H+mz)^5} 
\tdch_{X_5}^{K_{\bP^4}}\{W_5,p\}.
$$
When $W=0$, the GLSM $I$-function is 
$$
I_{w=0}(q,z) = z e^{ (\log q) H/z}  \sum_{d=0}^\infty q^d \frac{\prod_{m=1}^{5d} (-5H-mz)}{\prod_{m=1}^d (H+mz)^5} 
\tdch_{\bP^4}^{K_{\bP^4}}\{0,p\}.
$$

\subsection{$I$-functions in the LG phase $\zeta<0$}
$$
I\cX_\zeta = \bigsqcup_{v=0}^4 \cX_{\zeta,v}, \quad
\cX_{\zeta,v} =\begin{cases}
[\bC^5/\mu_5], & v=0,\\
[0/\mu_5]=B\mu_5, & v\in \{1,2,3,4\}
\end{cases}.
$$
$$
\Eff_\zeta= \{ d: -5d-1\in \bZ_\geq 0\} = \left\{ -\frac{k+1}{5}: k\in \bZ_{\geq 0} \right\}.
$$
$$
V_{-\frac{k+1}{5} } = H^0(\bP^1, \cO_{\bP^1}(k)),\quad 
W_{-\frac{k+1}{5}} = H^1\left(\bP^1, \cO_{\bP^1}(-\frac{k+1}{5} ) \right) ^{\oplus 5}
$$
The LG loop space is 
$$
L_{-\frac{k+1}{5} } =  (V_{-\frac{k+1}{5}} -0)/\bC^* = \bP[5^{k+1}]:=\bP[\underbrace{5,\ldots, 5}_{k+1} ].
$$
The obstruction bundle is
$$
E_{-\frac{k+1}{5}} =  \left( (V_{-\frac{k+1}{5} } -0)\times W_{-\frac{k+1}{5}} \right)/\bC^*
 = \cO_{\bP[5^{k+1}]}(-1)^{\oplus 5 (\lceil \frac{k+1}{5}\rceil-1) }.
$$
For all $k\in \bZ_{\geq 0}$, 
$$
\cF_{-\frac{k+1}{5}} = B\mu_5.
$$

For both $W=pW_5$ and $W=0$, the GLSM $I$-function
\begin{eqnarray*}
I_w(t,z) &=& z \sum_{\substack{ k\in \bZ_{\geq 0}\\ k\notin 4+ 5\bZ} } q^{-(k+1)/5}  
\frac{\Gamma(1-\{\frac{k+1}{5}\} )^5  }{\Gamma(1-\frac{k+1}{5}))^5 } 
\frac{1}{k!} z^{5\lfloor \frac{k+1}{5}\rfloor -k} \one_{ \{ \frac{k+1}{5} \} }  \\
&=&  z \sum_{ \substack{ k\in \bZ_{\geq 0}\\ k\notin 4+ 5\bZ }} q^{-(k+1)/5}  
\frac{\Gamma(1-\{\frac{k+1}{5}\} ))^5}{\Gamma(1-\frac{k+1}{5}))^5 } 
\frac{1}{k!} \frac{ \one_{ \{ \frac{k+1}{5} \} } }{z^{5\{ \frac{k+1}{5}\} -1}   }   \\
&=& z  \sum_{ \substack{ k\in \bZ_{\geq 0}\\ k\notin 4+ 5\bZ }} q^{-(k+1)/5}  
\frac{\Gamma(1-\{\frac{k+1}{5}\} ))^5 }{\Gamma(1-\frac{k+1}{5}))^5} 
\frac{1}{k!} \frac{ \phi_{5 \{\frac{k}{5}\} } }{z^{5\{ \frac{k}{5}\}}   } 
\end{eqnarray*}
where $\phi_k = \one_{\frac{k+1}{5}}$, $k=0,1,2,3$. 

\subsection{Central charges and wall-crossing}
The central charges for the quintic GLSM can be computed using the Higgs-Coulomb
correspondence. Let $B \in \bR$ and $\fB \in \MF([V/G], w)$ such that for any character $\bt \in \hG \simeq \bZ$
entering the underlying representation of $\fB$ the grade restriction rule
\[
  |B + \bt| < 5/2
\]

is satisfied. Then the GLSM central charges $Z_{w}(\fB|_{K_{\bP^{4}}})$ and $Z_{w}(\fB|_{[\bC^{5}/\mu_{5}]})$
are computed by the hemisphere partition function:
\begin{equation} \label{eq:hemisphereQuintic}
  Z_{D^{2}}(\fB) = -\frac{1}{2\pi\sqrt{-1}}\int_{\sqrt{-1}\bR + \delta} \dd \sigma \,
  \Gamma(\sigma)^{5}\Gamma(1-5\sigma)q^{-\sigma}\Ch(\forget[\fB]),
\end{equation}
where $\delta \in (0,1/5)$ and $q = \exp(\zeta+2\pi\sqrt{-1}B)$.
\begin{remark}
  The hemisphere partition function provides the analytic continuation between
  GW theory of the quintic threefold $X_5$ and FJRW theory of the affine LG model $(W_5, \mu_5)$. Such an analytic continuation
  was established in~\cite{CR10} in a different basis. In~\cite{CIR} the analytic
  continuation was found to be consistent with Orlov equivalence. In the GLSM
  language hemisphere partition function provides the analytic continuation between
  $Z_{w}(\fB|_{K_{\bP^{4}}})$ and $Z_{w}(\fB|_{[\bC^{5}/\mu_{5}]})$, so Orlov equivalence is satisfied automatically.
\end{remark}

\section{Open/closed correspondence and extended LG/CY correspondence} 
\label{sec:extended}

% The periods of the mirror quintic are known to satisfy the following Picard-Fuchs equation~\cite{CdGP}:
% \begin{equation}
%  \label{eq:iPF}
%  \theta^{4} \omega = 5z(5\theta+1)(5\theta+2)(5\theta+3)(5\theta+4) \omega
% \end{equation}
% The disk potential is known to satisfy the inhomogeneous Picard-Fuchs equation~\cite{Wa07}:
% \begin{equation}
%  \label{eq:ePF}
%  (2\theta-1)\theta^{4} \tau = 5z(2\theta+1)(5\theta+1)(5\theta+2)(5\theta+3)(5\theta+4) \tau.
% \end{equation}
% Note, that the usual periods also satisfy the same PF equation.

The B-model disk potential satisfies the extended Picard-Fuchs equation \eqref{eq:ePF}. Correspondingly, it has the following Mellin-Barnes type integral representation \cite[Section 3]{Wa07}:
\begin{equation}
  \label{eq:MB1}
  \cT(q) = \frac{\pi^{2}}{2}\cdot \frac{1}{2\pi i} \oint_{i\bR} \dd \sigma \, \frac{\Gamma(-\sigma+1/2)\Gamma(5\sigma+1)\Gamma(\sigma+1/2)}
  {\Gamma(\sigma+1)^{5}}e^{i\pi(\sigma-1/2)}q^\sigma,
\end{equation}
where $\cT(q) = \cT^{\mathrm{CY}}(q)$ for the small $q$ expansion. 
 We rewrite this integral using reflection relation for the Gamma function:
\begin{equation}
  \Gamma(x)\Gamma(1-x) = \frac{\pi}{\sin(\pi x)},
\end{equation}
and we also replace $\sigma$ by $-\sigma$:
\begin{equation} \label{eq:MB2}
\begin{aligned}
 &  \cT(q) =\frac{1}{64\pi^3} \times \\
 & \times \frac{1}{2\pi i} \oint_{i \bR+\delta} \dd \sigma \, \Gamma(\sigma)^{5}\Gamma(1-5\sigma)
  \Gamma\big(\sigma+\frac{1}{2}\big)\Gamma\big(-\sigma+\frac{1}{2}\big)
  q^{-\sigma} e^{4\pi i \sigma}(1-e^{-2\pi i \sigma})^5,
\end{aligned} 
\end{equation}
where $0 <\delta < 1/5$.
We observe below that the second line of \eqref{eq:MB2} is nothing but a
quasimap central charge for a particular (abelian, Calabi-Yau) GLSM (see Section \ref{sec:MF}-\ref{sec:HC}).
In this paper we call this particular GLSM the extended GLSM corresponding to the real quintic. 

\subsection{Extended GLSM corresponding to the real quintic} 
More precisely, the input data of the extended GLSM corresponding to the real quintic is
$$
(V = \Spec\, \bC[x_1,\ldots,x_5, p, u,v]\simeq \bC^8 ,\, G=\bC^*\times \mu_2,\, \bC_R^*,\, W,\, \zeta)
$$
where the dimension 8 of $V$ is equal to the number of gamma functions on the right hand side of \eqref{eq:MB2}, 
$\mu_2 =\{\pm 1\}$,  and $G\cap \bC_R^*=\mu_2 \subset \bC_R^*$.  The weights of $G_0=\bC^*$ (the connected component of 1 of the gauge group $G$)  and $\bC^*_\omega = \bC_R^*/\mu_2$ on $V$ can be read from the Gamma function arguments; the weights
of $G_0=\bC^*$, $\mu_2$, $\bC_R^*$, $\bC_\omega^*$ are summarized in the following table: 
\begin{center}
  \begin{tabular}{c|cccccc|cr}
     &   $x_1$ & $x_2$ & $x_3$ & $x_4$ & $x_5$ & $p$ & $u$ & $v$  \\ \hline 
    $G_0=\bC^*$ & $1$ & $1$ & $1$ & $1$ & $1$ & $-5$ & $1$ & $-1$ \\ \hline
    $\mu_2$ & $+$ & $+$ & $+$ & $+$ & $+$ & $+$ & $-$ & $-$ \\ \hline
    $\bC^{*}_\omega$ & $0$ & $0$ & $0$ & $0$ & $0$ & $1$ & $1/2$ & $1/2$\\ \hline
    $\bC^*_R$ &  $0$ & $0$ & $0$ & $0$ & $0$ & $2$ & $1$ & $1$\\ 
  \end{tabular}
\end{center}
where we distinguished the last two columns for later convenience. It is clear, that the above extended GLSM  is obtained from the GLSM of the quintic threefold (see Section \ref{sec:GLSMquintic})  by an addition of two coordinates  $u$ and $v$ of weights $1$ and $-1$ with respect to $G$. The new coordinates have weights $1/2$ with respect to $\bC^{*}_\omega$ and therefore have to be orbifolded by an additional $\mu_{2}$-symmetry (see discussion below on how $G$ is related to $\Gamma$).

Definition of the superpotential $W$ is ambiguous, but we can take a simple family of polynomials such as:
\begin{equation} \label{eq:suPot}
  W_t(x,p,u,v) = p\sum_{i=1}^{5} x_{i}^{5}+tuv, \quad t\in \bC.
\end{equation}
% where $t$ is a parameter over a disk. 

Let $\Gamma$ be the subgroup of $\GL(V)=\GL_8(\bC)$ generated by $G$ and $\bC_R^*$. Then $\Gamma\simeq \bC^* \times \bC_R^*$ acts on 
$V$ by 
$$
 (t_1,t_2)\cdot (x, p, u, v) = ( t_1 x , t_1^{-5} t_2^2 p, t_1 t_2 u, t_1^{-1} t_2 v).
$$
For any $t\in \bC$, 
$$
(t_1,t_2)\cdot W_t(x,p,u,v) = (t_2)^2 W_t(x,p,u,v).
$$ 
So  $\chi:\Gamma = (\bC^*)^2 \to \bC^*_\omega$ is given by $\chi(t_1,t_2)= t_2^2$, and 
$$
G= \mathrm{Ker}(\chi) = \{ (t_1,t_2)\in (\bC^*)^2: (t_2)^2=1\}\simeq \bC^* \times \{\pm 1\}.
$$
The stability condition is a rational $G$ character $\zeta \in \hG \otimes \bQ \simeq \bQ$. 

\subsection{Central charges in the positive phase $\zeta>0$} \label{sec:central-charge+} 
The GIT quotient stack $\cX_\zeta =[V\sslash_\zeta G]$  for the positive choice of the stability parameter $\zeta$ is
$$
 \cX_\zeta =  \big[ \tot\big(\cO_{\bP^{5}}(-5) \oplus \cO_{\bP^{5}}(-1) \big)\big/\mu_2 \big]
 $$
 where $-1$ acts on $\tot(\cO_{\bP^{5}}(-5) \oplus \cO_{\bP^{5}}(-1))$ by 
\begin{equation} \label{eqn:minus-one}
-1 \cdot [x, p, u,v] = [x,p, -u,-v] =  [-x, -p, u,v], \quad x = (x_1,\ldots, x_5).
\end{equation} 
 
\subsubsection*{Case 1: $t\ne 0$.}  For $t \ne 0$ the critical locus of the supertotential~\eqref{eq:suPot} is
\begin{eqnarray*}
\Crit(w_\zeta) &=&  X_5\times B\mu_2 \stackrel{W_5(x) =0}{\subset} \bP^4 \times B\mu_2  \stackrel{u=0}{\subset} [ \bP^5/\mu_2]  \\
&& \stackrel{p=v=0}{ \subset}\big[\tot \big( \cO_{\bP^{5}}(-5) \oplus \cO_{\bP^{5}}(-1) \big)\big/\mu_2\big].
\end{eqnarray*}
Therefore, in this case there are no additional central charges compared to the quintic example. 
On the level of integral representations, for $t\ne 0$ all the branes 
$$
\fB \in \MF([V/G],w) = \MF_G(V, W_t = p W_5(x) + tuv)  
$$
contain the matrix factorization of $uv$. In particular, the Chern character contains a factor $(1+e^{2\pi \sqrt{-1}\sigma})$ that 
cancels all the poles of the two additional gamma functions
$$
\Gamma(\sigma+\frac{1}{2})\Gamma(-\sigma+\frac{1}{2}) = \frac{\pi}{\cos(\pi\sigma)}.
$$

\subsubsection*{Case 2: $t=0$} For $t=0$, the critical locus becomes larger. Let 
$$
CX_5 = \{ W_5(x)=0\} \subset \bP^5
$$
be the cone over the Fermat quintic threefold $X_5\subset \bP^4$, and let
$\cO_{CX_5}(-1)$ be the restriction of $\cO_{\bP^5}(-1)$ to $CX_5$.  For $t=0$ the critical locus of the superpotential is 
singular and is the union of two irreducible components: 
$$
\Crit(w_\zeta) = \{ W_5(x) = px_i^4 =0\} = C_1 \cup C_2
$$
where 
$$
C_1  = \big[\tot \big(\cO_{CX_5}(-1) \big)/\mu_2\big] 
\stackrel{W_5=p=0}{\subset}  \cX_\zeta= \big[\tot \big( \cO_{\bP^{5}}(-5) \oplus \cO_{\bP^{5}}(-1) \big)\big/\mu_2\big].
$$
is a smooth closed substack of codimension 2, and 
$$
C_2 = \{ x_1^4=\cdots= x_5^4=0\} \subset \cX_\zeta
$$
is a non-reduced closed substack of codimension 5. 

The disk potential $\cT^{\CY}(q)$ appears as an additional central charge in this case for a particular choice of a brane 
$$
\fB \in \MF(\cX_\zeta, w_\zeta)  = \MF_G(V^{ss}_G(\zeta), W_0 = p W_5(x)).
$$
Let $\cD_i \subset \cX_\zeta$ be the toric divisor defined by $x_i=0$, and let
$$
U = \bigoplus_{i=1}^5 \cO_{\cX_\zeta}(\cD_i). 
$$
The closed substack $\bigcap_{i=1}^5 \cD_i = (C_2)_{\mathrm{red}}  \subset \cX_\zeta$ 
is the zero locus of the regular section 
$$
b =(x_1,\ldots, x_5) \in H^0(\cX_\zeta, U). 
$$
Let 
$$
a = (px_1^4, \ldots, px_5^4) \in H^0(\cX_\zeta, U^\vee).
$$
Then
$$
\langle a, b\rangle = p\sum_{i=1}^5 x_i^5 = W_0.
$$
The brane $\fB = \cL \otimes \fB_{0}$ is a tensor product of
a line bundle $\cL = \cO_{\cX_{\zeta}}(\cD_{1})^{\otimes 2}$ with the Koszul matrix factorization
\begin{equation}
  \label{eq:KMF}
\fB_0= \{a, b\} = \Big[
\xymatrix{
\bigoplus_i \Lambda^{2i} U^\vee  \ar@/{ }^{1pc}/[r]^{\partial = i_b + a\wedge}  &   \bigoplus_i \Lambda^{2i+1} U^\vee \ar@/{ }^{1pc}/[l]^\partial
}
\Big]
\end{equation}
where $i_b$ is the interior product with the section $b$ of $U$ and  $a\wedge$ is the wedge product with the section $a$ of $U^\vee$.   The matrix factorization \eqref{eq:KMF}
is a deformation of the  Koszul resolution of the closed substack $\bigcap_{i=1}^5 \cD_i \subset \cX_\zeta$.  
The Chern character of $\mathfrak{forget}[\fB]\in K(\cX_\zeta)$ is equal to
\begin{equation}
  \label{eq:chKoszul}
  \Ch(\mathfrak{forget}[\fB]) = e^{4\pi \sqrt{-1} \sigma}(1-e^{-2\pi\sqrt{-1}\sigma})^{5}.
\end{equation}

We have the following open/closed correspondence  relating  
disk invariants $\cT^{\CY}(q)$ of $(X_5, \bR X_5)$ to closed string invariants $Z(\fB)$ of the extended GLSM. 
\begin{proposition}[open/closed correspondence for $(X_5, \bR X_5)$ at $\epsilon = 0^+$] \label{prop:openHC} 
\begin{equation} \label{eq:openHC}
 Z_{D^2}(\fB) = 64 \pi^3 \cT^{\CY}(q) \Big|_{q=e^{-\theta}}.
\end{equation}
\end{proposition}
\begin{proof} 
This follows from \eqref{eq:hemi}, \eqref{eq:MB2}, and \eqref{eq:chKoszul}.
\end{proof}

\subsection{Central charges in the negative phase $\zeta<0$}  \label{sec:central-charge-hybrid}

The brane $\fB$ satisfies the grade restriction rule with respect to the unique wall 
separating the chambers $\zeta > 0$ and $\zeta < 0$.
Indeed, the grade restriction rule takes the form:
\[
  |B+\bt| < 3.
\]
The brane $\fB$ consists of $\bt \in \{-3,-2,-1,0,1,2\}$, so for $B \in (0,1)$ the brane
satisfies the grade restriction rule. Then
$Z_{D_{2}}(\fB)$ realizes the analytic continuation between $Z_{w}(\fB_{\zeta > 0})$
and $Z_{w}(\fB_{\zeta < 0})$. Notice that the analogous brane for the original
quintic GLSM would restrict to zero in the geometric phase and, in particular, would be
not grade restricted.

The wall-crossing of the extended model provides
GLSM viewpoint on the LG/CY correspondence~\eqref{eq:LGCY1} in
the open string sector.

To compute the decomposition of $Z_{D_{2}}(\fB)$ into $\cT^{\LG}$ and
a closed period of the FJRW model we can decompose the class of the brane
$\forget[\fB] \in K([V/G])$. Let $t \in \hG$ be the fundamental character
such that $\forget[\fB] = t^{2}(1-t^{-1})^{5}$.

From the formula~\eqref{eq:MB2} it is clear,
that $\cT^{\LG}$ must come from a part of the brane whose class is proportional to
$1-t^{-5}$ to cancel the poles of $\Gamma(1-5\sigma)$ and the closed period $\cT_{c}$
must come from the part proportional to $1+t^{-1}$ to cancel the poles of
$\Gamma(1/2-\sigma)$. Therefore, to find representation of the form
\[
  \cT^{\CY} = \cT^{\LG} + \cT_c
\]
we need to decompose:
\[
  t^{2}(1-t^{-1})^{5} = f(1+t^{-1})+g(1-t^{-5}).
\]
Such a decomposition is easy to find:
\[
  t^{2}(1-t^{-1})^{5} = (-15t^{2}+10t-10t^{-1}+15t^{-2})(1+t^{-1})+16t^{2}(1-t^{-5}).
\]
Then, using~\eqref{eq:openHC} we have
\[
  \begin{aligned}
  &\cT^{\LG} = \frac{\sqrt{-1}}{2\pi^{2}}\frac{1}{2\pi\sqrt{-1}}\int_{\bR+\delta}
  \dd \sigma \, \frac{\Gamma(\sigma)^{5}}{\Gamma(5\sigma)}\Gamma(1/2-\sigma)
  \Gamma(1/2+\sigma)q^{-\sigma}e^{-\pi\sqrt{-1}\sigma} \\
  &\cT_c = \frac{1}{32\pi^{2}}\frac{1}{2\pi\sqrt{-1}}\int_{\bR+\delta}
  \dd \sigma \, \Gamma(\sigma)^{5}\Gamma(1-5\sigma)q^{-\sigma}e^{-\pi\sqrt{-1}\sigma} \times \\ &\times (-15e^{4\pi\sqrt{-1}\sigma}+10e^{2\pi\sqrt{-1}\sigma}-10e^{-2\pi\sqrt{-1}\sigma}
  +15e^{-4\pi\sqrt{-1}\sigma}).
  \end{aligned}
\]

Notice, that the second line is (up to a factor and coordinate change $q \to -q$) the integral representation of
the quintic hemisphere partition function~\eqref{eq:hemisphereQuintic} for the brane
$\fB_{c}$ with $\Ch(\forget[\fB_{c}]) = -15e^{4\pi\sqrt{-1}\sigma}+10e^{2\pi\sqrt{-1}\sigma}-10e^{-2\pi\sqrt{-1}\sigma}
  +15e^{-4\pi\sqrt{-1}\sigma}$.
% $$
 % \cX_{\zeta} =  \big[  \tot\big(\cO_{\bP[5,1]}(-1)^{\oplus 6}\big) \big/\mu_2  \big] 
 % $$
  
% GLSM of the quintic threefold is defined by the data $V \simeq \bC^{6}, \; G \simeq \bC^{*}$ and the
% weight table:
% \begin{center}
%  \begin{tabular}{l|cccccr}
%    $G$ & $1$ & $1$ & $1$ & $1$ & $1$ & $-5$ \\ \hline
%    $\bC^{*}_{R}$ & $0$ & $0$ & $0$ & $0$ & $0$ & $1$
%  \end{tabular}
% \end{center}

\subsection{The fan and minimal anticones} \label{sec:fan} 
The coarse moduli space $X_\zeta$ of the GIT quotient stack $\cX_\zeta$ is 
a semi-projective simplicial toric variety defined by a fan. 
Choice of the fan is non-canonical and depends on the choice of coordinates on the torus. 
For the extended model, the fan in the positive and negative phases have the same 1-dimensional cones
whose generators $v_1,\ldots, v_8 \in \bZ^7$ can be chosen to be the column vectors of the following $7\times 8$ matrix: 
\begin{equation}
  v = \begin{pmatrix}
        1 & 1 & 1 & 1 & 1 & 1 & 1 & 1 \\
        1 & 0 & 0 & 0 & -1 & 0 & 0 & 0 \\
        0 & 1 & 0 & 0 & -1 & 0 & 0 & 0 \\
        0 & 0 & 1 & 0 & -1 & 0 & 0 & 0 \\
        0 & 0 & 0 & 1 & -1 & 0 & 0 & 0 \\
        0 & 0 & 0 & 0 & 0 & 0 & 1 & 1 \\
        0 & 0 & 0 & 0 & 2 & 0 & -1 & 1
      \end{pmatrix}
\end{equation}
 The $5 \times 6$ submatrix in the upper-left corner is the standard choice of 1-dimensional cones 
 for the $K_{\bP^{4}}$. We modify this fan by adding 2 vectors and 2 coordinates. Resulting 8 vectors
 satisfy the relation $\sum_{i=1}^{5} v_{i} -5v_{6}+v_{7}-v_{8} = 0$.
 In addition, the coefficients are chosen to represent the correct $\mu_{2}$ action.

The set of minimal anticones is
$$
\cA^{\min}_{\zeta} =\begin{cases}
\big\{ \{1\}, \{2\}, \{3\}, \{4\}, \{5\}, \{7\} \big\}, & \zeta>0; \\
\big\{ \{6\}, \{8\} \big\}, & \zeta<0. 
\end{cases}
$$
Minimal anti-cones are in one-to-one correspondence with torus fixed points in $X_\zeta$. 

\subsection{$I$-functions in the positive phase $\zeta>0$} 
\begin{equation}\label{eq:b013}
\mathrm{Box}_\zeta = \Big\{ 
b_0= 0, \ 
b_1 = \frac{1}{2}(v_7+v_8) = \begin{pmatrix}1\\ 0\\ 0\\ 0\\ 0\\ 1\\ 0\end{pmatrix}, \ 
b_3 = \frac{1}{2}\sum_{i=1}^6 v_i =\begin{pmatrix} 3\\ 0\\ 0\\ 0\\ 0\\ 0\\ 1 \end{pmatrix} 
\Big\} 
\end{equation}
where $\age(b_k)= k$. The three elements $b_0, b_1, b_3\in \mathrm{Box}_\zeta$ correspond to  the following three elements in $G\subset (\bC^*)^8$:
\begin{equation}\label{eq:gb013}
\begin{aligned}
g(b_0)  &=  (1,1,1,1,1,1,1,1) \quad \text{(identity)} \\
g(b_1) &=  (1,1,1,1,1,1,-1,-1), \\
g(b_3) &=  (-1, -1, -1, -1, -1, -1,1,1).
\end{aligned} 
\end{equation}
The inertia stack of $\cX_\zeta$ is a disjoint union of three connected components: 
$$
I \cX_\zeta = \cX_{\zeta,0} \cup \cX_{\zeta, 1} \cup \cX_{\zeta,3} 
$$
where  $\cX_{\zeta,0} = \cX_{\zeta}$ is the untwisted sector, and
\begin{eqnarray*}
\cX_{\zeta,1} &:= &\cX_{\zeta,b_1} =   \{ u=v=0\}\simeq  K_{\bP^4} \times B\mu_2,\\
\cX_{\zeta, 3} &:= & \cX_{\zeta, b_3} =   \{ x_i=p=0\}\simeq B\mu_2\times \bC.
\end{eqnarray*}

As a graded complex vector space, the orbifold Chen-Ruan cohomology of $\cX_\zeta$ (which is the state space of
orbifold quasimap theory to the GIT quotient stack $\cX_\zeta = [V\sslash_\zeta G]$) is  
$$
H^*_{\CR}(\cX_\zeta) =  H^*(\cX_{\zeta, 0})\one_0  \oplus H^*(\cX_{\zeta,1})\one_1 \oplus  H^*(\cX_{\zeta,3}) \one_3,
$$
where  $\deg \one_k = 2k$, and 
$$
H^*(\cX_{\zeta,0}) \simeq H^*(\bP^5),\quad  H^*(\cX_{\zeta,1})\simeq H^*(\bP^4), \quad H^*(\cX_{\zeta,3}) = H^*(\bullet),
$$
Therefore, the Poincar\'{e} polynomial is 
\begin{equation}\label{eq:CR-plus}
P_t\left(H^*_{\CR}(\cX_\zeta)\right) := \sum_k \dim_{\bC} t^k H^k_{\CR}(\cX_\zeta) =  1 + 2t^2 + 2t^4 + 3t^6 + 2t^8 + 2t^{10}.
\end{equation} 

As a graded complex vector space, the A-model state space of the extended GLSM  in the positive phase is 
$$
\cH_W = \cH_{W,0}\one_0 \oplus \cH_{W,1}\one_1 \oplus \cH_{W,3}\one_3 
$$
where $\deg \one_k = 2k-4$, and
$$
\cH_{W,k}=   \bH^*\left(\cX_{\zeta, k},  (\Omega^*_{\cX_{\zeta,k}}, dw_{\zeta,k}\wedge) \right).
$$

Let  $\cZ^w_{\zeta,k} = \Crit(w_{\zeta,k}) \subset \cX_{\zeta,k}$. We consider the following cases:
\begin{center}
 \renewcommand{\arraystretch}{1.2}
 \begin{tabular}{l|c|c|c}
 & $\cZ_{\zeta,0}^w$ & $\cZ_{\zeta,1}^w$ &  $\cZ_{\zeta,3}^w$ \\ \hline 
(1) $W=0$ & $\cX_{\zeta,0}=\cX_\zeta$  & $\cX_{\zeta,1} = K_{\bP^4}\times B\mu_2$  & $\cX_{\zeta,3} = B\mu_2\times \bC$ \\ \hline
(2) $W=pW_5(x)$ &  $C_1\cup C_2$  & $X_5\times  B\mu_2$ & $B\mu_2\times \bC$ \\  \hline 
(3) $W=uv$ & $K_{\bP^4}\times B\mu_2$ & $K_{\bP^4}\times B\mu_2 $ &  $\emptyset$  \\  \hline
(4) $W=pW_5(x)+ uv$ & $X_5\times B\mu_2$ & $X_5\times B\mu_2$ & $\emptyset$ 
\end{tabular}
\renewcommand{\arraystretch}{1}
\end{center}
where $\emptyset$ denotes the empty set.  In cases (1)--(3), $\cZ_{\zeta,0}^w$ is not proper and $\cH_{W,0}$ is infinite dimensional.
In case (4), or more generally, when $W= pW_5(x) + t uv$ where $t\neq 0$, 
$$
\cH_W \simeq H^*(X_5) \oplus H^*(X_5),
$$ 
so Poincar\'{e} polynomial is 
\begin{equation}\label{eq:GLSM-plus}
P_t(\cH_W) :=  \sum_k t^k \dim_{\bC} \cH_W^k  = 2P_t(H^*(X_5)) =  2 + 2t^2 + 408 t^3+ 2t^4 + 2t^6. 
\end{equation} 

The effective classes are $\Eff_\zeta = \{ d \in \frac{1}{2}\bZ \mid d \geq 0 \}$. Let
\begin{eqnarray*}
V_d &=& H^0(\bP^1, \cO_{\bP^1}(d))^{\oplus 5} \oplus H^0(\bP^1, \cO_{\bP^1}(d-\frac{1}{2}))\\
W_d &=&  H^1(\bP^1, \cO_{\bP^1}(-5d-1)\oplus H^1(\bP^1, \cO_{\bP^1}(-d-\frac{1}{2}))
\end{eqnarray*}
Note that $V_0=\bC^5$ and $W_0=0$, so 
$$
L_0 = E_0=\bP^4\times B\mu_2.
$$
The degree $d$ LG loop space is 
$$
L_d = [(V_d- 0)/G] =  \begin{cases} 
\left[\bP^{6d+4} /\mu_2 \right] \text{ where -1 acts on the last $d$ coordinates},   & d\in \bZ_{>0}; \\ 
\left[\bP^{6d+1}/\mu_2 \right]  \text{ where -1 acts on the last $d-\frac{1}{2}$ coordinates}, & d\in \frac{1}{2}+\bZ_{\geq 0}.
\end{cases} 
$$
The degree $d$ obstruction bundle is 
$$
E_d  = [(V_d-0)\times W_d)/\bC^* = \begin{cases}
\left[  \cO_{\bP^{6d+4}} (-5)^{\oplus 5d}\oplus \cO_{\bP^{6d+4}}(-1)^{\oplus d } /\mu_2\right] , & d\in \bZ_{>0};\\
\left[ \cO_{\bP^{6d+1}} (-5)^{\oplus (5d+1/2)}\oplus   \cO_{\bP^{6d+1}} (-1)^{\oplus (d-1/2) }/\mu_2 \right], & d\in \frac{1}{2} + \bZ_{\geq 0}.
  \end{cases}
$$
$$
\dim L_d -\rank E_d = \begin{cases}
4, & d\in \bZ_{\geq 0};\\
1, & d\in \frac{1}{2}+\bZ_{\geq 0}
\end{cases}.
$$
We have
$$ 
\cF_d= \begin{cases}
\bP^4 \times B \mu_2 \stackrel{p=0}{\subset} \cX_{\zeta,1} = \tot(\cO_{\bP^4}(-5))\times B\mu_2,  & d\in \bZ_{>0}; \\
B\mu_2  \stackrel{v=0}{\subset}  \cX_{\zeta,3} = B\mu_2 \times \bC, & d\in \frac{1}{2} +  \bZ_{\geq 0}.
\end{cases}
$$
For any superpotential $W$, 
$$
z^{-1} I_w = I_{w,0} + I_{w,1} + I_{w,3}  
$$
where $I_{w,0}=0$. In cases (3) and (4), $I_{w,3}=0$. 

\begin{enumerate}
\item If $W=0$ then
\begin{eqnarray*}
I_{w,1} (q,z) &=& e^{\log q H/z} \cdot \sum_{d=0}^\infty q^d \frac{\prod_{m=1}^{5d} (-5H-mz) \prod_{m=1}^{d}(-H-(m-\frac{1}{2})z)} {\prod_{m=1}^d (H+mz)^5  \prod_{m=1}^d (H+(m-1/2)z)} \one_1 \cdot \tdch^{K_{\bP^4}\times B\mu_2}_{\bP^4\times B\mu_2} \{0,p\}  \\
&=& e^{\log q H/z}  \cdot \sum_{d=0}^\infty q^d \frac{\prod_{m=1}^{5d} (5H+mz) } {\prod_{m=1}^d (H+mz)^5}  \one_1 \cdot
\tdch^{K_{\bP^4}\times B\mu_2}_{\bP^4\times B\mu_2} \{0,p\},
\end{eqnarray*}
\begin{eqnarray*} 
I_{w,3}(q,z) &=&  \sum_{k=0}^\infty q^{k+\frac{1}{2}} 
 \frac{\prod_{m=0}^{5k+2} (-(m+\frac{1}{2})z)\prod_{m=1}^k(-mz)}{\prod_{m=0}^k ((m+\frac{1}{2})z)^5 \prod_{m=1}^k(mz)}\one_3
  \cdot \tdch^{B\mu_2\times \bC}_{B\mu_2}\{0,v\} \\
&= & - \sum_{k=0}^\infty q^{k+\frac{1}{2}}  \frac{\prod_{m=0}^{5k+2} ((m+\frac{1}{2})z)}{\prod_{m=0}^k ((m+\frac{1}{2})z)^5 } \one_3 
\cdot \tdch^{B\mu_2\times \bC}_{B\mu_2}\{0,v\} \\
&=& -4 \sum_{\substack{ d>0 \\ d\text{ odd} }} q^d \frac{(5d)!!}{ (d!!)^5} \frac{\one_3}{z^2}  \tdch^{B\mu_2\times \bC}_{B\mu_2}\{0,v\}.
\end{eqnarray*} 

\item If $W=pW_5(x)$ then
\begin{eqnarray*}
I_{w,1} (q,z) &=& e^{\log q H/z}  \cdot \sum_{d=0}^\infty q^d \frac{\prod_{m=1}^{5d} (5H+mz) } {\prod_{m=1}^d (H+mz)^5}  \one_1 \cdot
\tdch^{K_{\bP^4}\times B\mu_2}_{X_5\times B\mu_2} \{ W_5(x),p\} ,\\
I_{w,3}(q,z) &=&  -4 \sum_{\substack{ d>0 \\ d\text{ odd} }} q^d \frac{(5d)!!}{ (d!!)^5} \frac{\one_3}{z^2}  \tdch^{B\mu_2\times \bC}_{B\mu_2}\{0,v\}.
\end{eqnarray*}

Let $\fB$ be defined as in Section \ref{sec:central-charge+}. Then
\begin{eqnarray*}
Z_w(\fB) &=& \langle I_w(q,-1), \hat{\Gamma}\ch_w([\fB])\rangle\\
&=& 8\sum_{\substack{ d>0 \\ d\text{ odd} }} q^d \frac{(5d)!!}{ (d!!)^5}\Gamma(\frac{1}{2})^6 2^5 \int_{B\mu_2}1 \\
&=&  128 \pi^3 \sum_{\substack{ d>0 \\ d\text{ odd} }} q^d \frac{(5d)!!}{ (d!!)^5} \\
&=&  64\pi^3\cT^{\CY}(q). 
\end{eqnarray*} 
By  \eqref{eq:openHC},
$$
Z_{D^2}(\fB) = 64\pi^3\cT^{CY}(q)\Big|_{q=e^{-\theta}}.
$$
This is consistent with Higgs-Coulomb correspondence (Theorem \ref{thm:HC}).

\item If $W= uv$ then 
\begin{eqnarray*}
I_{w,1} (q,z) &=& e^{\log q H/z}  \cdot \sum_{d=0}^\infty q^d \frac{\prod_{m=1}^{5d} (5H+mz) } {\prod_{m=1}^d (H+mz)^5}  \one_1 \cdot
\tdch^{K_{\bP^4}\times B\mu_2}_{\bP^4\times B\mu_2} \{ 0,p\}, \\
I_{w,3}(q,z) &=& -4 \sum_{\substack{ d>0 \\ d\text{ odd} }} q^d \frac{(5d)!!}{ (d!!)^5} \frac{\one_3}{z^2}  
\tdch^{B\mu_2\times \bC}_{\emptyset}\{u,v\} = 0.
\end{eqnarray*} 

\item If $W=pW_5(x)+uv$  (or more generally $W=pW_5(x) + tuv$ where $t\neq 0$) then 
\begin{eqnarray*}
I_{w,1} (q,z) &=& e^{\log q H/z}  \cdot \sum_{d=0}^\infty q^d \frac{\prod_{m=1}^{5d} (5H+mz) } {\prod_{m=1}^d (H+mz)^5}  \one_1 \cdot
\tdch^{K_{\bP^4}\times B\mu_2}_{X_5\times B\mu_2} \{ W_5(x),p\}, \\
I_{w,3}(q,z) &=&  0.
\end{eqnarray*} 

\end{enumerate}

\subsection{A mirror conjecture in the positive phase.}
We now specialize to the case (2): $W=pW_5(x)$. Define  
\begin{eqnarray*}
\be &:=&  \tdch^{K_{\bP^4}\times B\mu_2}_{X_5\times B\mu_2} \{W_5(x),p\} \one_1,   \\
\bp &:=& \tdch^{B\mu_2\times \bC}_{B\mu_2}\{0,v\} \one_3. 
\end{eqnarray*} 
Then
$$
(\bp, \one_3) = \int_{B\mu_2\times \bC} \tdch^{B\mu_2\times \bC}_{B\mu_2}\{0,v\} = \int_{B\mu_2} 1 = \frac{1}{2}. 
$$
Then the GLSM $I$-function can be rewritten as
$$
I^+_w(q,z) := I_w(q,z) = z \left( \sum_{k=0}^3 I_k^{\CY}(q) \frac{H^k}{z^k} \be  -2 \cT^{\CY}(q) \frac{\bp}{z^2}\right)
$$

Let $J^+_w(Q,z)$ be the  (yet-to-be-defined)  GLSM (small) $J$-function of the extended GLSM in the positive phase $\zeta>0$. 
\begin{conjecture}[mirror conjecture for the extended GLSM in the positive phase] \label{GLSM-J+}
\begin{equation}\label{eq:GLSM-J+} 
J^+_w(Q,z) = \frac{I^+_w(q,z)}{I^{\CY}_0(q,z)}  \quad \text{under the mirror map}\quad \log Q = \frac{I_1^{\CY}(q )}{I_0^{\CY}(q)}. 
\end{equation}
\end{conjecture}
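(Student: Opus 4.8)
I would deduce \eqref{eq:GLSM-J+} from the closed mirror theorem (Theorem~\ref{CYclosed}) and the open mirror theorem (Theorem~\ref{CYopen}), once a definition of the extended GLSM $J$-function $J^+_w$ has been fixed. Following the remark in Section~\ref{sec:HC}, I would introduce a family $J^{+,\epsilon}_w$, $\epsilon\in\bQ_{>0}$, by $\bC_{\mathsf{q}}^*$-localization on $\epsilon$-stable LG quasimap graph spaces of the extended GLSM in the positive phase with superpotential $W=pW_5(x)$ (case (2)), the insertions being taken from the finite-dimensional part of the A-model state space on which the cycles $F^w_d$ are supported, namely $H^*(X_5\times B\mu_2)\,\one_1$ together with the class $\bp$ in sector $3$; here the invariants are built from Favero--Kim virtual factorizations (or cosection-localized virtual cycles) concentrated on the proper pieces $X_5\times B\mu_2\subset\cX_{\zeta,1}$ and $B\mu_2\subset\cX_{\zeta,3}$, even though $\Crit(w_\zeta)=C_1\cup C_2$ is not proper. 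I would set $J^+_w:=J^{+,\infty}_w$; by construction $I^+_w=J^{+,0^+}_w$. Then \eqref{eq:GLSM-J+} is the $\epsilon$-wall-crossing statement relating $\epsilon=\infty$ and $\epsilon=0^+$ (the GLSM analogue of the quasimap wall-crossing of Ciocan-Fontanine--Kim and Ross--Ruan), with the normalization and the mirror map read off from the $z\to\infty$ asymptotics of $I^+_w$: the leading ($z^1$) coefficient of the $\be$-part is $I^{\CY}_0(q)$, and after dividing by it the $H\be$-coefficient is $I^{\CY}_1(q)/I^{\CY}_0(q)=\log Q$, while the $\bp$-part of $I^+_w$ is already $O(z^{-1})$ and so produces no shift.

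\textbf{Reduction to the two known mirror theorems.} Since no $\cF_d$ meets $\cX_{\zeta,0}$, both $I^+_w$ and $J^+_w$ are supported in sectors $1$ and $3$, so I would match their $\be$- and $\bp$-components separately. For the $\be$-component, the coordinates $u,v$ and the group $\mu_2$ are spectators in sector $1$, and the sector-$1$ theory of the extended GLSM with $W=pW_5(x)$ coincides with the quintic GLSM of Section~\ref{sec:GLSMquintic} tensored by the trivial factor $B\mu_2$; its genus-zero invariants are, up to sign, the GW invariants of $X_5$, so the sector-$1$ component of $J^+_w=J^{+,\infty}_w$ is (the $\be$-image of) the quintic $J$-function $J^{\CY}(Q,z)$. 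Hence the $\be$-component of \eqref{eq:GLSM-J+} reads $J^{\CY}(Q,z)=I^{\CY}(q,z)/I^{\CY}_0(q)$ under $\log Q=I^{\CY}_1(q)/I^{\CY}_0(q)$, i.e.\ Theorem~\ref{CYclosed}. For the $\bp$-component, the sector-$3$ contributions --- indexed by half-integer degrees $d\in\frac12+\bZ_{\geq 0}$ with fixed locus $\cF_d=B\mu_2$ --- should assemble into the generating function of genus-zero open GW invariants of $(X_5,\bR X_5)$; that their $\epsilon=0^+$ specialization equals Walcher's $\cT^{\CY}$ is the computation of $I_{w,3}$ above together with \eqref{eq:openHC}, and the $\epsilon=\infty$ counterpart should produce, up to the constant $-2$ and the factor $\int_{B\mu_2}1=\frac12$, the open GW potential $F^{\CY}_{0,1}(Q)$. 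Granting this, the $\bp$-component of \eqref{eq:GLSM-J+} becomes $F^{\CY}_{0,1}(Q)=\cT^{\CY}(q)/I^{\CY}_0(q)$ under the same mirror map, i.e.\ Theorem~\ref{CYopen}.

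\textbf{The main obstacle.} What keeps this a conjecture is that two ingredients of the above are not presently available in the needed form: (i) a well-posed definition of $J^+_w$ (and of $J^{+,\epsilon}_w$) in the presence of the non-proper, singular critical locus $\Crit(w_\zeta)=C_1\cup C_2$, via virtual factorization cycles concentrated on the proper pieces; and (ii) an $\epsilon$-wall-crossing theorem for this extended GLSM asserting that $J^{+,\epsilon}_w$ depends on $\epsilon$ only through the closed-string mirror map, with \emph{no} correction along the $\bp$-direction. Point (ii) is the hard part: one expects it because $\cH_{W,3}$ contributes in a single, divisor-free degree, so the wall-crossing change of variables can only move the degree-$2$ coordinate $\log Q$; but a rigorous proof needs a quasimap/GLSM wall-crossing formalism that simultaneously accommodates broad insertions, cosection-localized graph spaces, and non-proper targets. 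An alternative route bypassing wall-crossing would be to prove directly that each component of $J^+_w$ solves the extended Picard--Fuchs equation \eqref{eq:ePF} (via the topological recursion relations and the divisor equation inside the extended GLSM) and then pin it down by matching leading $z$-coefficients; but this essentially re-proves both mirror theorems and is no easier. Once either route is in place, the remaining work --- tracking the factors $\frac12$ from $\int_{B\mu_2}1$, the sign $-2$, and the normalization by $I^{\CY}_0$ --- is routine.
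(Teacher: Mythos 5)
This statement is left as a conjecture in the paper: the authors do not prove it, and indeed they explicitly defer even the \emph{definition} of $J^+_w$ to future work, noting that the $\epsilon$-stability of \cite{FJR18, FK} requires a good lift $\tilde{\zeta}$ of $\zeta$, which does not exist for the extended GLSM. Your proposal is therefore correctly calibrated in spirit --- it is a strategy outline that ends by acknowledging the missing ingredients --- and your primary route ($\epsilon$-wall-crossing between $J^{+,0^+}_w=I^+_w$ and $J^{+,\infty}_w=J^+_w$) matches the mechanism the paper itself gestures at in the remark of Section~\ref{sec:HC} and in its discussion after the conjecture. One correction: the specific definitional obstruction the paper names is not the non-properness of $\Crit(w_\zeta)=C_1\cup C_2$ (the Favero--Kim formalism already handles that via cycles supported on the proper pieces, as in the table of Section 5.5), but the absence of a good lift of the stability condition, without which the $\epsilon$-stability condition on LG quasimaps is not even formulated. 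Your obstacle (i) should be replaced, or at least supplemented, by this.

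There is also a genuine circularity in your ``reduction to the two known mirror theorems.'' For the $\be$-component the reduction to Theorem~\ref{CYclosed} is plausible (sector $1$ is the quintic GLSM tensored with $B\mu_2$). But for the $\bp$-component you assert that the sector-$3$ part of $J^{+,\infty}_w$ ``should produce'' the open GW potential $F^{\CY}_{0,1}(Q)$ and then invoke Theorem~\ref{CYopen}. That identification --- $z(J^+_w,\one_3)=-F^{\CY}_{0,1}(Q)$, i.e.\ $\langle\one_3\rangle_{d/2}=-N_d^{\mathrm{disk}}$ --- is precisely Corollary~\ref{OC+}, which the paper \emph{derives from} Conjecture~\ref{GLSM-J+} together with Theorem~\ref{CYopen}, and for which the authors say a direct A-model proof would be interesting but is not available. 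The sector-$3$ invariants of the extended GLSM are closed-string GLSM invariants of a different moduli problem from Solomon's disk counts; nothing currently identifies them except the conjecture itself. So Theorem~\ref{CYopen} cannot serve as an input to the $\bp$-component of the conjecture; the only non-circular route in your outline is the wall-crossing one, and your point (ii) correctly isolates why that is out of reach at present.
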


Define a generating function of genus-zero primary GLSM invariants:
\begin{equation}\label{eq:one3+}
F^+(Q) :=  z (J_w^+(Q,z),\one_3) =\sum_{\substack{d\in\bZ_{>0} \\ d \text{ odd} } } \langle \one_3\rangle_{\frac{d}{2}} Q^{\frac{d}{2}}
\end{equation}
Conjecture  \ref{GLSM-J+} implies
\begin{equation} \label{eq:one3-T+} 
F^+(Q) = -\frac{\cT^{\CY}(q)}{I_0^{\CY}(q)}   \quad \text{under the mirror map}\quad \log Q = \frac{I_1^{\CY}(q )}{I_0^{\CY}(q)}. 
\end{equation} 
\eqref{eq:one3+}, \eqref{eq:one3-T+} and Theorem \ref{CYopen} (open mirror theorem in the Calabi-Yau phase) imply 
$$
F^+(Q)=  -F_{0,1}^{\CY}(Q) 
$$
which is equivalent to the following. 
\begin{corollary}[numerical open/closed correspondence in CY phase]\label{OC+} 
For all positive odd number $d$,
$$
\langle \one_3\rangle_{\frac{d}{2}} =  -N^{\text{disk}}_d.
$$
where $-N^{\text{disk}}_d$ is the disk invariants defined by the non-trivial spin structure on 
the real quintic $\bR X_5 \simeq\bR \bP^3$. 
\end{corollary}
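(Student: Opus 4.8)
The plan is to deduce Corollary \ref{OC+} from Conjecture \ref{GLSM-J+} together with the open mirror theorem (Theorem \ref{CYopen}), by isolating the $\one_3$-component of the (conjectural) $J$-function; the resulting statement is therefore conditional on Conjecture \ref{GLSM-J+}. First I would pair both sides of \eqref{eq:GLSM-J+} with $\one_3$ and multiply by $z$. Using the explicit formula $I^+_w(q,z)=z\big(\sum_{k=0}^3 I_k^{\CY}(q)z^{-k}H^k\,\be-2\cT^{\CY}(q)z^{-2}\bp\big)$ together with the fact that the orbifold Poincar\'{e} pairing pairs each twisted sector with itself, so $(\be,\one_3)=0$ (because $\be$ is supported on $\cX_{\zeta,1}$ while $\one_3$ lives on $\cX_{\zeta,3}$), only the $\bp$-term survives; with the normalization $(\bp,\one_3)=\tfrac12$ the constants $-2$ and $\tfrac12$ combine to $-1$, giving $F^+(Q)=-\cT^{\CY}(q)/I_0^{\CY}(q)$ under the mirror map $\log Q=I_1^{\CY}(q)/I_0^{\CY}(q)$, which is \eqref{eq:one3-T+}. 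Here one also records that the mirror map has the form $Q=q\exp(f^{\CY}(q))$, hence is an invertible change of variable on formal neighbourhoods of $q=0$ and $Q=0$, and that $Q^{1/2}=q^{1/2}\exp(\tfrac12 f^{\CY}(q))$, so $\cT^{\CY}(q)/I_0^{\CY}(q)$ is a genuine power series in $Q^{1/2}$ supported in odd powers.

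Next I would invoke Theorem \ref{CYopen}, giving $F^{\CY}_{0,1}(Q)=\cT^{\CY}(q)/I_0^{\CY}(q)$ under the same mirror map; combining the two identities yields $F^+(Q)=-F^{\CY}_{0,1}(Q)$ as power series in $Q^{1/2}$. Since $F^+(Q)=\sum_{d>0,\ d\ \text{odd}}\langle\one_3\rangle_{d/2}Q^{d/2}$ by \eqref{eq:one3+} and $F^{\CY}_{0,1}(Q)=\sum_{d>0,\ d\ \text{odd}}N_d^{\text{disk}}Q^{d/2}$, comparing the coefficient of $Q^{d/2}$ for each positive odd $d$ gives $\langle\one_3\rangle_{d/2}=-N_d^{\text{disk}}$. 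The parenthetical identification of $-N_d^{\text{disk}}$ with the disk invariants attached to the other relative spin structure on $\bR X_5\simeq\bR\bP^3$ follows from the standard fact that the two relative spin structures on $(X_5,\bR X_5)$ differ exactly by the sign on the odd-degree open Gromov-Witten contributions.

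The only delicate point is the first step: keeping the signs and the factor $\tfrac12$ straight so that precisely $-1$ (and no spurious constant) appears, together with the routine verification that half-integer powers of $Q$ transform correctly under the mirror map. Everything else is formal once Conjecture \ref{GLSM-J+} and Theorem \ref{CYopen} are granted; in particular the whole argument is numerical, at the level of generating functions rather than virtual cycles, so no further geometric input is needed.
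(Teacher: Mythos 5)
Your proposal is correct and follows essentially the same route as the paper: the authors likewise pair the conjectural identity \eqref{eq:GLSM-J+} with $\one_3$, use the explicit form of $I^+_w$ together with $(\bp,\one_3)=\tfrac12$ to obtain \eqref{eq:one3-T+}, and then combine with Theorem \ref{CYopen} to get $F^+(Q)=-F^{\CY}_{0,1}(Q)$, comparing coefficients of $Q^{d/2}$. You also correctly flag that the statement is conditional on Conjecture \ref{GLSM-J+}, exactly as the paper does.
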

It will be interesting to have a direct A-model proof of the above numerical open/closed correspondence, as in 
\cite{LY22, LY}. 

The GLSM $J$-function $J_w^+$ should be defined in terms of genus-zero $\infty$-stable LG quasimaps to the extended GLSM.
The definition of  $\epsilon$-stability in \cite{FJR18, FK} requires a good lift of $\zeta$, which does not exist for the extended GLSM.
We leave  the precise definition of $J_w^+$, as well as the proofs of Conjecture \ref{GLSM-J+} and Corollary \ref{OC+}, to future work.

\subsection{$I$-functions in the negative phase $\zeta<0$} 
In the negative phase $\zeta<0$, 
$$
 \cX_{\zeta} =  \big[  \tot\big(\cO_{\bP[5,1]}(-1)^{\oplus 6}\big) \big/\mu_2  \big]. 
$$ 

$$
\mathrm{Box}_\zeta =\{ b_0, b_1, b_3\} \cup \{ b_{j,k}: j\in \{1,2,3,4\}, k\in \{0,1\} \} 
$$
where $b_0, b_1, b_3$ are given in Equation \eqref{eq:b013}, 
$$
b_{j,0} = \frac{j}{5} (\sum_{i=1}^5 v_i +v_7) +  (1-\frac{j}{5}) v_8 =
\begin{pmatrix}
j+1\\ 0\\ 0\\ 0\\ 0\\ 1\\ 1
\end{pmatrix}
$$
and
$$
b_{j,1}= \frac{j}{5}\sum_{i=1}^5 v_i + \{ \frac{1}{2}+\frac{j}{5}\} v_7 + \{ \frac{1}{2}-\frac{j}{5}\} v_8
=  \begin{pmatrix}
j+1\\ 0\\ 0\\ 0\\ 0\\ 1\\ 1 +\epsilon_j
\end{pmatrix}, \quad
\epsilon_j =\begin{cases} -1, & j=1,2\\ 1, & j= 3, 4.\end{cases}
$$
We have $\age(b_{j,k})= j+1$ for $j\in\{1,2,3,4\}$ and $k\in \{0,1\}$. 

For $k\in \{0,1,3\}$, $g(b_k) \in G$ is given in Equation \eqref{eq:gb013}. 
Let $\zeta_5 = e^{2\pi\sqrt{-1}/5}$. For $j\in \{1,2,3,4\}$ and $k\in \{0,1\}$, 
\begin{equation}\label{eq:gbjk}
g(b_{j,k})= \left(\xi_5^j, \xi_5^j, \xi_5^j, \xi_5^j, \xi_5^j, 1,  (-1)^k \xi_5^j, (-1)^k \xi_5^{-j}\right) \in G. 
\end{equation} 
The inertia stack of $\cX_\zeta$  is a disjoint union of eleven connected components: 
$$
I\cX_{\zeta} = \cX_0\cup \cX_1 \cup \cX_3 \cup \bigcup_{\substack{ j\in \{1,2,3,4\} \\ k\in \{0,1\}} } \cX_{j,k} 
$$
where 
\begin{eqnarray*}
\cX_0&=& \cX_{\zeta} =\left[ \tot\big(\cO_{\bP[5,1]}(-1)^{\oplus 6}\big)/\mu_2\right] ,\\
\cX_1&:=&  \cX_{\zeta, b_1}= \{ u=v=0\} = [\bC^5/\mu_5] \times B\mu_2, \\
\cX_3 &:=& \cX_{\zeta, b_3}= \{x_i=p=0\} = B\mu_2\times \bC, \\
\cX_{j,k} &:=& \cX_{\zeta, b_{j,k}} = \{ x_i = u=v=0\} = B(\mu_5\times \mu_2).
\end{eqnarray*} 

As a graded complex vector space, the Chen-Ruan orbifold cohomology of $\cX_\zeta$ is 
$$
H^*_{\CR} (\cX_\zeta) = \bigoplus_{k\in \{0,1,3\}}H^*(\cX_k)\one_k \oplus   \bigoplus_{\substack{ j=\{1,2,3,4\} \\ k\in \{0,1\} }} H^*(\cX_{j,k})\one_{j,k} 
$$ 
where $\deg \one_k = 2k$, $\deg \one_{j,k}= 2j+2$, and
$$
H^*(\cX_0) \simeq H^*(\bP[5,1]) =\bC 1 \oplus \bC p, \quad
H^*(\cX_1) \simeq H^*(\cX_3)\simeq H^*(\cX_{j,k})\simeq H^*(\bullet).
$$
The Poincar\'{e} polynomial is
\begin{equation}\label{eq:CR-minus} 
P_t(H^*_{\CR}(\cX_\zeta)) := \sum_k t^k \dim_\bC H^k_{\CR}(\cX_\zeta) = 
1 + 2t^2 + 2t^4 + 3t^6 + 2 t^8 + 2t^{10}
\end{equation}
which agrees with \eqref{eq:CR-plus} in the positive phase, as expected. 

As a graded complex vector space, the A-model state space of the extended GLSM in the negative phase is 
$$
\cH_W =  \bigoplus_{k\in \{0,1,3\} } \cH_{W,k}\one_k \oplus  \bigoplus_{\substack{ j=\{1,2,3,4\} \\ k\in \{0,1\} }} \cH_{W,j,k} \one_{j,k} 
$$
where $\deg \one_k = 2k-4$, $\deg\one_{j,k} = 2j-2$, and 
$$
\cH_{W,k} =  \bH^*\left(\cX_k,  (\Omega^*_{\cX_k}, dw_k\wedge) \right),\quad
\cH_{W,j,k}=   \bH^*\left(\cX_{j,k},  (\Omega^*_{\cX_{j,k}}, dw_{j,k}\wedge) \right).
$$

Let   $\cZ^w_k = \Crit(w_k) \subset \cX_k$ and let $\cZ^w_{j,k}= \Crit(w_{j,k})\subset \cX_{j,k}$. 
Let 
$$
X_0 = \{ W_5(x)=0\} \subset \bC^5,\quad \mathbf{o}=\{x_i^4=0\}\subset \bC^5, 
$$
$$
C^-_1 = \{ W_5(x)= p=0\} \simeq [X_0/\mu_5] \times \bC\times B\mu_5 \subset \cX_\zeta, \quad C^-_2 = \{ x_i^4=0\} \subset \cX_\zeta, 
$$

 \begin{center}
 \renewcommand{\arraystretch}{1.2}
 \begin{tabular}{l|c|c|c|c}
 & $\cZ_0^w$ & $\cZ_1^w$  & $\cZ_3^w$ &  $\cZ_{j, k}^w$  \\ \hline 
(1) $W=0$ & $\cX_\zeta$  & $[\bC^5/\mu_5] \times B\mu_2$ & $B\mu_2\times \bC$  & $B(\mu_5\times \mu_2)$  \\ \hline
(2) $W=pW_5(x)$ &  $C^-_1\cup C^-_2$  & $[\mathbf{o}/\mu_5] \times  B\mu_2$  & $B\mu_2\times \bC$ & $B(\mu_5\times \mu_2)$   \\  \hline 
(3) $W=uv$ & $[\bC^5/\mu_2] \times B\mu_2$ & $[\bC^5/\mu_5] \times B\mu_2 $  &  $\emptyset$  &   $B(\mu_5\times \mu_2)$  \\  \hline
(4) $W=pW_5(x)+ uv$ & $[\mathbf{o}/\mu_5]\times B\mu_2$ & $[\mathbf{o}/\mu_5] \times B\mu_2$ &  $\emptyset$ 
 &  $B(\mu_5\times \mu_2)$ \end{tabular}
\renewcommand{\arraystretch}{1}
\end{center}
In cases (1)--(3), $\cZ_0^w$ is not proper and $\cH_{W,0}$ is infinite dimensional.
In case (4), or more generally, when $W= pX_5(x) + t uv$ where $t\neq 0$,
$$
\cH_{W,0} = \cH_{W,0}^7 =\bC^{204},\;\;
\cH_{W,1} = \cH_{W,1}^5 =\bC^{204},\;\;
\cH_{W,3}=0, \;\;
\cH_{W,j, k}  \simeq H^*(\bullet), 
$$
so the  Poincar\'{e} polynomial is 
\begin{equation}\label{eq:GLSM-minus}
P_t(\cH_W) :=  \sum_k t^k \dim_{\bC} \cH_W^k  = 2 + 2t^2 + 408 t^3+ 2t^4 + 2t^6 
\end{equation}
which agrees with \eqref{eq:GLSM-plus} in the positive phase, as expected. 

The effective classes are
\begin{eqnarray*}
\Eff_\zeta &=& \{ d: -5d-1\in \bZ_{\geq 0} \} \cup \{ d: -d-\frac{1}{2} \in \bZ_{\geq 0}\} \\
&=& \{ -\frac{k+1}{5}: k\in \bZ_{\geq 0}\} \cup \{ -\frac{1}{2}-k: k\in \bZ_{\geq 0}\}. 
\end{eqnarray*}

Let $k\in \bZ_{\geq 0}$. 
\begin{eqnarray*}
V_{-\frac{k+1}{5}} &=& H^0(\bP^1, \cO_{\bP^1}(k)) \oplus H^0\Big(\bP^1, \cO_{\bP^1}(\frac{2k-3}{10})\Big) \\
W_{-\frac{k+1}{5}} &=& H^1\Big(\bP^1, \cO_{\bP^1}(-\frac{k+1}{5}) \Big)^{\oplus 5} \oplus H^1(\bP^1, \frac{-2k-7}{10}) 
\end{eqnarray*}
The degree $-(k+1)/5$ LG loop space is 
$$
L_{-\frac{k+1}{5}} = \left[ \bP[5^{k+1}, 1^{\lfloor \frac{2k-3}{10} \rfloor +1}] \big/\mu_2\right] 
$$
where $-1$ acts on the last  $\lfloor \frac{2k-3}{10} \rfloor +1$ coordinates. The obstruction bundle is
$$
E_{-\frac{k+1}{5}} =\left[ \tot\big(\cO_{\bP}(-1)^{\oplus 5 (\lceil \frac{k+1}{5}\rceil -1) +\lceil \frac{2k+7}{10}\rceil -1}\big) \big/\mu_2\right]
$$
where $\bP = \bP\big[5^{k+1}, 1^{\lfloor \frac{2k-3}{10} \rfloor +1} \big]$. 
$$
\cF_{-\frac{k+1}{5}} = B(\mu_5\times \mu_2)  \subset 
\begin{cases}
\cX_1= [\bC^5/\mu_5]\times B\mu_5, & k\in 5\bZ+4,\\
\cX_{5\{\frac{k+1}{5}\}, 1} =  B(\mu_5\times \mu_2), & k\notin 5\bZ+4.
\end{cases}
$$
 
\begin{eqnarray*}
V_{-\frac{1}{2}-k} &=&  H^0(\bP^1, \cO_{\bP^1}(5k+\frac{3}{2})) \oplus H^0(\bP^1, \cO_{\bP^1}(k)\Big)\\
W_{-\frac{1}{2}-k} &=& H^1\Big(\bP^1, \cO_{\bP^1}(-\frac{1}{2}-k)\Big)^{\oplus 5}  \oplus H^1(\bP^1, \cO(-1-k))
\end{eqnarray*}
The degree $-1/2-k$ LG loop space is 
$$
L_{-\frac{1}{2}-k} = \left[ \bP[5^{5k+2}, 1^{k+1}]\big/\mu_2\right] 
$$
where  $-1$ acts on the last $k+1$ coordinates.  The obstruction bundle is
$$
E_{-\frac{1}{2}-k} =\left[ \tot\big(\cO_{\bP}(-1)^{\oplus 5 (\lceil \frac{1}{2} + k \rceil -1) +k}\big) \big/\mu_2\right]
$$
where $\bP = \bP\big[5^{5k+2}, 1^{k+1} \big]$. 

$$
\cF_{-\frac{1}{2}-k} =B\mu_2 \stackrel{u=0}{\subset} \cX_3 = B\mu_2\times \bC. 
$$

For any superpotential $W$,  
$$
z^{-1} I_w = I_{w,0} +  I_{w,1} +  I_{w,3} +   \sum_{j=1}^4 I_{w,j,0} +  \sum_{j=1}^4 I_{w,j,1}  
$$
where
$$
I_{w,0} = I_{w,1} = I_{w,j,0}=0,
$$ 
and 
\begin{eqnarray*}
\sum_{j=1}^4 I_{w,j,1} &=& \sum_{\substack{ k\in \bZ_{\geq 0} \\ k\notin 4+5\bZ}} q^{-(k+1)/5}
\frac{\Gamma(1- \{ \frac{k+1}{5} \})^5\Gamma(1-\{ \frac{2k+7}{10} \} )\Gamma(1-\{ -\frac{2k-3}{10}\})}{\Gamma(1-\frac{k+1}{5})^5 \Gamma(1-\frac{2k+7}{10})
\Gamma(1+ \frac{2k-3}{10 })}
\frac{1}{k!}\frac{ \one_{5\{ \frac{k+1}{5} \}, 1 } }{ z^{5\{\frac{k+1}{5}\} -1} } \\
&=& \sum_{\substack{ k\in \bZ_{\geq 0} \\ k\notin 4+5\bZ}} q^{-(k+1)/5}
\frac{\Gamma(1- \{ \frac{k+1}{5} \})^5}{\Gamma(1-\frac{k+1}{5})^5}
\frac{(-1)^{\lfloor\frac{2k+7}{10}\rfloor}}{k!}\frac{ \one_{5\{ \frac{k+1}{5} \}, 1 } }{ z^{5\{\frac{k+1}{5}\} -1} }.
\end{eqnarray*}

\begin{enumerate}
\item If $W=0$ then
\begin{eqnarray*}
I_{w,3}&=& \sum_{k=0}^\infty q^{-\frac{1}{2}-k} \frac{ \prod_{m=0}^{k-1} (-(m+\frac{1}{2})z)^5 \prod_{m=1}^k (-mz)}{\prod_{m=0}^{5k+1}((m+\frac{1}{2}) z) \prod_{m=1}^k (mz)} \one_3 \cdot \tdch^{B\mu_2\times \bC}_{B\mu_2}\{ 0,u\}  \\
&=& 4 \sum_{k=0}^\infty q^{-\frac{1}{2}-k} \frac{ ((2k+1)!!)^5 }{(2k+1)^5 (10k+3)!!} 
  \frac{\one_3}{z^2} \cdot \tdch^{B\mu_2\times \bC}_{B\mu_2} \{0,u\} \\
&=& 4 \sum_{d>0, d\text{ odd} } q^{-\frac{d}{2} }\frac{(d!!)^5}{d^5(5d-2)!!}  
 \frac{\one_3}{z^2} \cdot \tdch^{B\mu_2\times \bC}_{B\mu_2} \{0,u\}.
\end{eqnarray*}
\item If $W=pW_5(x)$ then
$$
I_{w,3} = 4 \sum_{d>0, d\text{ odd} } q^{-\frac{d}{2} } \frac{(d!!)^5}{d^5(5d-2)!!}  
\frac{\one_3}{z^2}  \tdch^{B\mu_2\times \bC}_{B\mu_2} \{0,u\}. 
$$

Let $\fB\in \MF(\cX_\zeta, w_\zeta)$ such that $\mathfrak{forget}([\fB]) = 16 t^2(1-t^{-5})$, where $t=\cO_{\cX_\zeta}(\cD_1)$ (see Section \ref{sec:central-charge-hybrid}). Then
\begin{eqnarray*}
Z_w(\fB) &=& \langle I_w(q,-1), \hat{\Gamma}\ch_w([\fB])\rangle\\
&=&  -8 \sum_{\substack{ d>0 \\ d\text{ odd} }} q^{-\frac{d}{2}} \frac{(d!!)^5}{d^5 (5d-2)!!}\Gamma(\frac{1}{2})^6 32 \int_{B\mu_2}1 \\
&=&  -128 \pi^3 \sum_{\substack{ d>0 \\ d\text{ odd} }}q^{-\frac{d}{2}} \frac{(d!!)^5}{d^5 (5d-2)!!}\\
&=& 64 \pi^3 \cT^{\LG}(q). 
\end{eqnarray*}

\item  If $W=uv$ then 
$$
I_{w,3} = 4 \sum_{d>0, d\text{ odd} } q^{-\frac{d}{2} } \frac{(d!!)^5}{d^5(5d-2)!!}   \frac{\one_3}{z^2} \tdch^{B\mu_2\times \bC}_{\emptyset} 
\{ v,u\} =0. 
$$
\item If $W=pW_5(x) + uv$, or more generally $W= pW_5(x) + tuv$ where $t\neq 0$, then
$$
I_{w,3}=0.
$$
\end{enumerate}

\subsection{A mirror conjecture in the negative phase.}
We now specialize to the case (2): $W=pW_5(x)$. Define  
$$
\tphi_k := \cdot \one_{ 5\{\frac{k+1}{5}\}, 1}, \quad k=0,1,2, 3, 
$$
and
$$
\bp_-  :=  \tdch^{B\mu_2\times \bC}_{B\mu_2}\{0,u\} \one_3. 
$$
Then
$$
(\bp_-, \one_3) =\int_{B\mu_2\times \bC} \tdch^{B\mu_2\times \bC}_{B\mu_2}\{0,u\} = \int_{B\mu_2} 1 = \frac{1}{2}. 
$$
Then GLSM $I$-function can be rewritten as (recall that $q=t^{-5}$) 
$$
I^-_w(q,z) := I_w(q,z) = z \left( \sum_{k=0}^3 \epsilon_k I_k^{\LG}(t) \frac{\tphi_k}{z^k}    -2 \cT^{\LG}(t) \frac{\bp_-}{z^2}\right)
$$ 
where $\epsilon_0=\epsilon_1=1$ and $\epsilon_2=\epsilon_3=-1$.

Let $J^-_w(\tau,z)$ be the (yet-to-be-defined) GLSM (small) $J$-function of the extended GLSM in the negative phase $\zeta < 0$. 
\begin{conjecture}[mirror conjecture for the extended GLSM in the negative phase] \label{GLSM-J-}
\begin{equation}\label{eq:GLSM-J-} 
J^-_w(\tau,z) = \frac{I^-_w(t,z)}{I^{\LG}_0(t,z)}  \quad \text{under the mirror map}\quad \tau = \frac{I_1^{\LG}(t)}{I_0^{\LG}(t)}. 
\end{equation}
\end{conjecture}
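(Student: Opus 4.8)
The statement concerns the (not yet constructed) small $J$-function $J^-_w$ of the extended GLSM in the negative phase, so the first task is to pin down its definition. The plan is to define $J^-_w$ as the generating function of genus-zero, one-pointed GLSM invariants of the extended model, obtained from $\infty$-stable (stable-map-limit) LG quasimap graph spaces together with Favero--Kim virtual factorizations~\cite{FK, FJR18}. The obstruction that no \emph{rational} good lift $\tilde\zeta\in\widehat{\Gamma}\otimes\bQ$ of $\zeta$ exists will be handled either by working with a \emph{real} good lift $\tilde\zeta\in\widehat{\Gamma}\otimes\bR$ (which does exist here, by the Lemma preceding Theorem~\ref{thm:HC}, since a separating $\delta$ exists), or, equivalently, by defining $J^-_w$ intrinsically as the unique family of points of the GLSM Lagrangian cone $\cL_w$ in Givental normal form $z\tphi_0 + \tau\tphi_1 + O(z^{-1})$. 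Granting such a definition, the proof then follows the familiar three steps: (i) show that $I^-_w$ lies on $\cL_w$; (ii) apply Birkhoff factorization; and (iii) identify the resulting change of variables with the mirror map $\tau = I_1^{\LG}/I_0^{\LG}$.

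Step (i) is the technical heart. I would establish it by a GLSM $\epsilon$-wall-crossing argument, adapting the quasimap wall-crossing of Ciocan-Fontanine--Kim~\cite{CK14, CK20} and its orbifold/FJRW incarnations~\cite{CCK, RR17}: one interpolates between the $0^+$-stable theory --- whose $I$-function should be exactly the $I^-_w = J^{-,0^+}_w$ computed above by $\bC^*$-localization on the LG loop spaces $L_{-(k+1)/5}$ and $L_{-1/2-k}$ --- and the $\infty$-stable theory, whose $J$-function is $J^-_w$. Because the superpotential in case (2) is $W = pW_5(x)$ and the extra coordinates $u,v$ decouple from it, the extended GLSM in the negative phase is, up to the $B\mu_2$-gerbe carrying $(u,v)$, a mild enlargement of the FJRW model of $(W_5,\mu_5)$: the only genuinely new sector is the non-compact one $\cX_3 = B\mu_2\times\bC$ supporting the $\one_3$-direction. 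This product-like structure should allow one to import the quasimap wall-crossing of ordinary quintic FJRW theory sector by sector. One must also verify finiteness of the relevant invariants despite $\cZ_0^w = C^-_1\cup C^-_2$ being non-proper and non-reduced; this holds because every insertion lives in a compact sector ($\cX_{j,1}$ or $\cX_3$) and the cosection-localized Favero--Kim class is supported on $\cZ_d^w$, which is proper for every $d\in\Eff_\zeta$.

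Steps (ii) and (iii) are then largely formal. The computation above gives $I^-_w(t,z)/I_0^{\LG}(t) = z\tphi_0 + \tau(t)\tphi_1 + O(z^{-1})$ with $\tau(t) = I_1^{\LG}(t)/I_0^{\LG}(t)$; by step (i) this is a point on $\cL_w$ in Givental normal form, hence coincides with $J^-_w(\tau,z)$ under $\tau = I_1^{\LG}/I_0^{\LG}$. For the components along $\tphi_k$, $k=0,1,2,3$, the resulting identity reduces --- after the identification $\cX_{j,1}\cong B(\mu_5\times\mu_2)$ and bookkeeping of the signs $\epsilon_k = (-1)^{\lfloor(2k+7)/10\rfloor}$ --- to the closed LG mirror theorem of Section~\ref{LGclosed}. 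The genuinely new content is the component along $\bp_-$: up to normalization, it asserts that the generating function of genus-zero GLSM invariants of the extended model carrying a single $\one_3$-insertion equals $\cT^{\LG}(t)/I_0^{\LG}(t)$ under the mirror map (the $\one_3$-analogue of $F^+$ in the positive phase). I would deduce this by combining the positive-phase statement (Conjecture~\ref{GLSM-J+}, whose $\one_3$-part gives $-\cT^{\CY}(q)/I_0^{\CY}(q)$) with the wall-crossing theorem for central charges of the brane $\fB$ from~\cite{AL23} used in Section~\ref{sec:central-charge-hybrid}, together with Walcher's analytic continuation $\cT^{\CY} = \cT^{\LG} + \cT_c$: the $\cT_c$ part is absorbed into the $\tphi_k$-directions, leaving precisely $\cT^{\LG}$ along $\bp_-$. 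Alternatively this component can be read off directly from the Mellin--Barnes representation~\eqref{eq:MB2} via the Higgs--Coulomb correspondence (Theorem~\ref{thm:HC}).

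The principal obstacle, as the authors themselves flag, is foundational rather than computational: one needs a working theory of $\infty$-stable LG quasimaps --- hence of the Lagrangian cone $\cL_w$ and of an $\epsilon$-wall-crossing formula --- for a GLSM whose stability admits no rational good lift and whose untwisted-sector critical locus $\cZ_0^w$ is non-proper and non-reduced. Once that theory and the statement $I^-_w\in\cL_w$ of step (i) are available, the remaining steps --- Birkhoff factorization and matching the new $\bp_-$-component to $\cT^{\LG}$ via the open/closed correspondence for central charges already established above --- are routine.
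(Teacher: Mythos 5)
This statement is a \emph{conjecture} in the paper: the authors explicitly write that the $J$-function $J^-_w$ is yet to be defined and that they ``leave the precise definition of $J_w^-$ and the proof of Conjecture~\ref{GLSM-J-} to future work.'' There is therefore no proof in the paper to compare against, and your proposal should be judged as a research strategy rather than a proof. As such it is a sensible and well-informed roadmap --- the three-step template ($I$-function on the Lagrangian cone, Birkhoff factorization, identification of the mirror map) is indeed the standard route, and your identification of the $\bp_-$-component as the genuinely new content is correct.

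However, the proposal does not close any of the gaps that make this a conjecture, and in places it leans on statements that are themselves open. Concretely: (a) your definition of $J^-_w$ via a real good lift $\tilde\zeta\in\widehat{\Gamma}\otimes\bR$ or via an abstract Lagrangian cone $\cL_w$ is a proposal, not a construction --- the existing foundations \cite{FJR18, FK} require a rational good lift, which the paper notes does not exist here, and no GLSM Lagrangian-cone formalism with the required properties is available in this setting; (b) step (i), the assertion $I^-_w\in\cL_w$, is exactly the hard unproven ingredient --- the $\epsilon$-wall-crossing of \cite{CK14, CK20, RR17} has not been established for GLSMs with broad-type extra sectors, non-proper untwisted critical locus, and half-integral effective classes $d=-\tfrac12-k$, and the claim that the model is a ``mild enlargement'' of quintic FJRW theory understates precisely the sector $\cX_3=B\mu_2\times\bC$ where the new invariants live; and (c) your derivation of the $\bp_-$-component invokes Conjecture~\ref{GLSM-J+}, which is equally open, so that part of the argument is circular at the level of the paper's logical structure (the Higgs--Coulomb/Mellin--Barnes alternative you mention computes the \emph{central charge}, i.e.\ a pairing against $I^-_w$, which the paper already does; it does not by itself say anything about $J^-_w$). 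In short, the proposal correctly identifies what would need to be done but does not constitute a proof, consistent with the statement's status as a conjecture.
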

We leave  the precise definition of $J_w^-$ and the proof of Conjecture \ref{GLSM-J-} to future work.  

Define a generating function of genus-zero primary GLSM invariants:
\begin{equation}\label{eq:one3-}
F^-(\tau) :=  z (J_w^-(\tau,z),\one_3).  
\end{equation}
Conjecture  \ref{GLSM-J-} implies
\begin{equation} \label{eq:one3-T-} 
F^-(\tau) =  -\frac{\cT^{\LG}(t)}{I_0^{\LG}(t)}  \quad \text{under the mirror map}\quad \tau = \frac{I_1^{\LG}(t )}{I_0^{\LG}(t)}. 
\end{equation} 
\eqref{eq:one3-}, \eqref{eq:one3-T-} and Conjecture \ref{LGopen} (open mirror conjecture for $(W_5,\mu_5)$) imply 
the following open/closed correspondence: 
$$
F^-(\tau)=  -F_{0,1}^{\LG}(\tau).
$$

\section{B-model} \label{sec:B-model}
\label{sec:B}
Mirror theorem~\cite{Gi96, LLY97} for the quintic threefold in particular implies that central charges of the quintic are equal to the period integrals of the mirror quintic.  More precisely,
consider the following family $\cQ$ of quintic threefolds in $\bP^4$ over $\bC_t$:
\begin{equation}
	W_t(x)=\sum_{i=1}^5 x_i^5 + t \prod_{i=1}^5 x_i=0.
\end{equation}
This family is invariant with respect to $(\mu_5)^3 \subset \bP^4$ that preserves the product
$\prod_i x_i$. Mirror quintic is the family $[\cQ/\mu_5^3]$. 
The mirror family admits a holomorphic volume form $\Omega$.  Period integrals of $\Omega$ over cycles $\gamma \in H_3(\cQ, \bZ)$ satisfy the Picard-Fuchs equation and are equal to the central charges 
of the quintic for a particular choice of branes.
\begin{equation}
	\omega_{\gamma}(t) = \int_{\gamma} \Omega = \frac{1}{(2\pi\sqrt{-1})^{2}}\int_{T(\gamma)}\frac{\dd^5 x}{W_t(x)},
\end{equation}
where $T(\gamma) \in H_5(\bC^5 \backslash \{W_t = 0\})$ is the Hopf fibration lift of
the tubular neighborhood of $\gamma \in H_3(\cQ_t, \bZ)$. Here we lift a 3-cycle $\gamma$ from a
given fiber to a Gauss-Manin flat family of 3-cycles in $\cQ$.

Another well-known representation for the periods of the quintic is the LG representation by
oscillatory integrals:
\begin{equation}
	\int_{\Gamma} e^{-W_t(x)} \dd^5 x, \;\;\; \Gamma \in H_5(\bC^5, \Re(W) \gg 0).
\end{equation}

\subsection{Open B-model}
There exist a B-model representation for the disk potential $\tau$ \cite{MW09}.
Let
\begin{equation}
	C_{\pm} := \{x_1+x_2=0, \; x_3+x_4=0, \; x_5^2 \pm \sqrt{t}x_1 x_3\} \subset \cQ
\end{equation}
be two families of curves in $\cQ$. They descend to the mirror family $[\cQ/\mu_5^3]$.
Now, consider a family $C$ of 3-chains $C_t \in C_3(\cQ_t, \bZ)$ such that $\partial C_t = [C_{+,t}] - [C_{-,t}]$.
It was shown that the partial periods:
\begin{equation} \label{eq:partialPeriods}
	\int_{C} \Omega
\end{equation}
satisfy the extended Picard-Fuchs equation~\eqref{eq:ePF}.

Here we show that the open-closed correspondence of this paper holds true on the B-side as well,
namely the partial periods are equal to actual (oscillatory) periods associated to the
extended GLSM of Section \ref{sec:extended}.

\subsection{Extended B-model}
In this and the next two subsections, we show how to reproduce the disk potentials in CY~\eqref{eq:Iosc6} and LG~\eqref{eq:LGB1} phases
using mirror symmetry for the extended GLSM.\\

We use a version of Givental-Hori-Vafa mirror construction for $\cX_\zeta =[\bC^8\sslash_\zeta G]$ where
$G=\bC^*\times \mu_2$.  Recall from  Section \ref{sec:fan} that the generators 
of 1-dimensional cones are the column vectors $v_1,\ldots, v_8$ of the matrix
\begin{equation}
  v = \begin{pmatrix}
        1 & 1 & 1 & 1 & 1 & 1 & 1 & 1 \\
        1 & 0 & 0 & 0 & -1 & 0 & 0 & 0 \\
        0 & 1 & 0 & 0 & -1 & 0 & 0 & 0 \\
        0 & 0 & 1 & 0 & -1 & 0 & 0 & 0 \\
        0 & 0 & 0 & 1 & -1 & 0 & 0 & 0 \\
        0 & 0 & 0 & 0 & 0 & 0 & 1 & 1 \\
        0 & 0 & 0 & 0 & 2 & 0 & -1 & 1
      \end{pmatrix}
\end{equation}

The mirror superpotential is given by the formula 
$$
F(y,u,v) = \sum_{i} q_{i}y^{v_{i}} =  y_{0}f(y,u,v)
$$
where 
\begin{equation}
	f(y,u,v) = \sum_{i \le 4} y_i + \frac{v^{2}}{\prod_{i}y_{i}} + t + u/v + uv.
\end{equation} 
The superpotential gets deformed according to the R-charges:
$$ 
F_{q} = F+\sum_{i=1}^8 \frac{R_i}{2}\log(y^{v_i}) 
$$
or $\log(y_{0})+1/2\log(y_{0}uv)+1/2\log(y_{0}u/v) =
2\log(y_0)+\log(u)$.
Thus, the period integrals can be written as
\begin{equation}
  \label{eq:Iosc}
	I_{\Gamma}=\int_{\Gamma} e^{-y_{0}f} y_{0} u \, \dd y_{0} \,\omega,
\end{equation}
where
\begin{equation}
  \label{eq:volumeForm}
  \omega = \frac{\dd u}{u} \, \frac{\dd v}{v} \prod_{i=1}^{4} \frac{\dd y_{i}}{y_{i}}
\end{equation}
is the canonical volume form on the torus $(\bC^{*})^{6}_{u,v,y}$ and
$\Gamma$ is any contour that makes the integral convergent. The integral depends
only on the class of $\Gamma$ in $H_{7}((\bC^{*})^{7}, \Re(F_{q}) \gg 0)$.

\subsection{Extended period in the positive phase $\zeta>0$} \label{sec:period-positive} 

In order to evaluate the integral we perform the following (multivalued) coordinate change:
\begin{equation}
  \label{eq:coord1}
  \begin{aligned}
  &  x_{i} := y_{0}y_{i}, \;\;\; 1 \le i \le 4, \;\;\; x_{5} :=  \frac{ v^{2}y_{0} } { \prod_i y_i} , \\
  &  p_{1} := y_{0}t, \;\;\; p_{2} := y_{0}uv.
  \end{aligned}
\end{equation}
Then 
\begin{equation}
  \label{eq:coord12}
  \begin{aligned}
    & F = \sum_{i=1}^{5}x_{i}+p_{1}+p_{2}+t^{-5}p_{1}^{5}p_{2}/\prod_{i=1}^{5}x_{i},    \\ 
    & y_{0}^{2}u = \prod_{i=1}^{5}x_{i}^{-1/2}p_{1}^{7/2}p_{2}t^{-7/2}, \\
    & \frac{\dd y_{0}}{y_{0}}\omega = \frac12 \prod_{i=1}^{5}\frac{\dd x_{i}}{x_{i}}
      \prod_{a=1}^{2}\frac{\dd p_{a}}{p_{a}}.
  \end{aligned}
\end{equation}

Therefore,
\begin{equation}
  \label{eq:Iosc4}
  I_{\Gamma} = \frac{t^{-1}}2\sum_{m \ge 0}(-1)^{m}\frac{t^{-5(m+1/2)}}{m!}\int_{\Gamma}e^{-\sum_{i} x_{i} -\sum_{a}p_{a}} p_{1}^{5(m+1/2)}p_{2}^{m}\prod_{i}x_{i}^{-m-3/2}\prod_{i}\dd x_{i}\prod_{a} \dd p_{a} .
\end{equation}

\paragraph{\bf Integration contour}
We can choose a particular integration contour that transforms the integral in
the product of 7 Gamma function integrals.
We define the integration contour $\Gamma_{geom}$ parametrically, using the Hankel contour $\cC$.
The Hankel contour $\cC \subset \bC$ is a contour that
goes from infinity to 0 along the real axis, goes around the origin on the
counter-clockwise direction and then continues along the positive real axis
to positive infinity, $\log\cC =
(+\infty,0] \cup [0, 2\pi i] \cup [2\pi i, +\infty+2\pi i)$.

To define $\Gamma_{geom}$ we let $y_{0} \in (0, \infty), \; y_{1}, \ldots, y_{4} \in y_{0}^{-1}\cC, \; v^{2} \in \prod_{i=1}^{4} y_{i}/y_{0}
\cC, \; u \in y_{0}^{-1}v^{-1}(0,+\infty)$.

The contour separates the variables in the integral reduces to a product of 7 integrals:
\begin{multline}
  \label{eq:separation}
  \int_{\Gamma}e^{-\sum_{i} x_{i} -\sum_{a}p_{a}} p_{1}^{5(m+1/2)}p_{2}^{m}\prod_{i}x_{i}^{-m-3/2}\prod_{i}\dd x_{i}\prod_{a} \dd p_{a} =\\=
  \prod_{i=1}^{5}\int_{\cC} e^{-x_{i}}x_{i}^{-m-3/2}\dd x_{i} \cdot \int_{0}^{\infty} e^{-p_{1}}
  p_{1}^{5(m+1/2)} \dd p_{1} \cdot \int_{0}^{\infty}e^{-p_{2}}p_{2}^{m} \dd p_{2} = \\
  = 32 \Gamma(-m-1/2)^{5}\Gamma(5m+7/2)m!.
\end{multline}
Plugging it back to the sum we get
\begin{multline}
  \label{eq:Iosc5}
    I_{\Gamma} = t^{-1}16\sum_{m \ge}\sum_{m \ge 0}(-1)^{m}t^{-5(m+1/2)} \Gamma(5m+7/2)\Gamma(-m-1/2)^{5} = \\ = t^{-1}16\Gamma(7/2)\Gamma(-1/2)^{5}t^{-5/2}+ \cdots = -32\pi^{3}t^{-1}(30t^{-5/2}+\cdots)
\end{multline}
Then, after the coordinate change $t \to q = t^{-5}$ we get:
\begin{equation}
  \label{eq:Iosc6}
  t I_{\Gamma} =  -32\pi^{3}\cT^{\CY}(q).
\end{equation}

\subsection{Extended mirror period in the negative phase $\zeta<0$} \label{sec:period-negative}
In the negative phase we expand the integral~\eqref{eq:Iosc} differently.
Here we perform the coordinate change:
\begin{equation}
  \label{eq:coord2}
  \begin{aligned}
    x_{i} := y_{0}y_{i}, \;\;\; 1 \le i \le 4, \;\;\; x_{5} := \frac{ v^{2}y_{0} }{ \prod_{i}y_{i} } , \\
    p_{1} := y_{0}t, \;\;\; p_{2} := y_{0}u/v.
  \end{aligned}
\end{equation}
So that  
\begin{equation}
  \label{eq:coord21}
  \begin{aligned}
    & F = \sum_{i=1}^{5}x_{i}+p_{1}+p_{2}+t^{5}p_{1}^{-5}p_{2} \prod_{i=1}^{5}x_{i}, \\
    & y_{0}^{2}u = \prod_{i=1}^{5}x_{i}^{1/2}p_{1}^{-3/2}p_{2}t^{3/2}, \\
    & \frac{\dd y_{0}}{y_{0}}\omega =  \frac12\prod_{i=1}^{5}\frac{\dd x_{i}}{x_{i}}
      \prod_{a=1}^{2}\frac{\dd p_{a}}{p_{a}}.
  \end{aligned}
\end{equation}
Expanding the exponential in the integral we compute
\begin{equation}
  \label{eq:IoscLG1}
  I_{\Gamma_{LG}} = \frac{t^{-1}}{2} \sum_{m \ge 0} \frac{(-1)^{m}}{m!}t^{5(m+1/2)}\int_{\Gamma_{LG}} e^{-\sum_{i} x_{i}
  -p_{1}-p_{2}}\prod_{i}x_{i}^{m-1/2}p_{1}^{-5(m+1/2)}p_{2}^{m}\dd p_{1}\dd p_{2}\dd^{5}x.
\end{equation}
\paragraph{\bf Contour}

We construct the integration contour $\Gamma_{LG} \simeq \cC^{6} \times \bR$ parametrically in
a similar fashion to the geometric positive case. We let
$y_{0} \in \cC, \; y_{i} \in y_{0}^{-1}\cC$ for $1 \le i \le 4$. Then $v^{2} \in
y_{0}^{-1}\prod_{i=1}^{4}y_{i}\cC$ and $u \in vy_{0}^{-1}(0,\infty)$

The integral factorizes into a product of 7 gamma functions:
\begin{multline}
  \int_{\Gamma_{LG}} e^{-\sum_{i} x_{i}
  -p_{1}-p_{2}}\prod_{i}x_{i}^{m-1/2}p_{1}^{-5(m+1/2)}p_{2}^{m}\dd p_{1}\dd p_{2}\dd^{5}x = \\ =
  64\Gamma(m+1/2)^{5}\Gamma(-5m-3/2)m!.
\end{multline}
Plugging it back into the expression for $I_{\Gamma_{LG}}$ we obtain:
\begin{multline}
  I_{\Gamma_{LG}} = 32t^{-1} \sum_{m \ge 0} t^{5(m+1/2)}\Gamma(-5m-3/2)\Gamma(m+1/2)^{5} = \\ = -64t^{-1}\pi^{3}(-2/3t^{5/2}+\cdots).
\end{multline}
We compute that up to a multiple and the coordinate change $q = t^{-5}$ this expression coincides with
$\cT^{\LG}(q)$:
\begin{equation}
  \label{eq:LGB1}
  t I_{\Gamma_{LG}}|_{t^{-5}=q} = -64\pi^{3} \cT^{\LG}(q).
\end{equation}


\begin{thebibliography}{CFGKS}

\bibitem[AL23]{AL23} K. Aleshkin and C.-C. M. Liu, 
``Higgs-Coulomb correspondence and wall-crossing in abelian GLSMs,"
arXiv:2301.01266. 

\bibitem[BCS]{BCS} L. A. Borisov, L. Chen, and G. Smith, 
``The orbifold Chow ring of toric Deligne-Mumford stacks,"
J. Amer. Math. Soc. {\bf 18} (2005), no. 1, 193--215.

\bibitem[BH]{BH} L. A. Borisov and R. P. Horja, 
``On the K-theory of smooth DM stacks," 
Snowbird lectures on string geometry, 21--42, Contemp. Math., {\bf 401}, Amer. Math. Soc., Providence, RI, 2006.

\bibitem[BCT19]{BCT19} A. Buryak, E. Clader, and R. J. Tessler, 
 ``Closed extended  $r$-spin theory and the Gelfand-Dickey wave function," 
J. Geom. Phys. {\bf 137} (2019), 132--153.

\bibitem[BCT22]{BCT22} A. Buryak, E. Clader, and R. J. Tessler, 
``Open  $r$-spin theory I: Foundations,"
Int. Math. Res. Not. IMRN (2022), no. 14, 10458--10532.

\bibitem[BCT24]{BCT24} A. Buryak, E. Clader, and R. J. Tessler, 
``Open $r$-spin theory II: The analogue of Witten's conjecture for $r$-spin disks,"
J. Differential Geom. {\bf 128} (2024), no. 1, 1--75.

\bibitem[CdGP]{CdGP}  P. Candelas, X. de la Ossa, P. Green, L. Parks,
``A pair of Calabi-Yau manifolds as an exactly soluble superconformal field theory,'' 
Nuclear Physics B {\bf 359} (1): 21--74, 1991.

\bibitem[CKL]{CKL} H.-L. Chang, Y.-H. Kiem, and  J. Li, 
``Algebraic virtual cycles for quantum singularity theories," 
Comm. Anal. Geom. {\bf 29} (2021), no.8, 1749--1774.

\bibitem[CL]{CL} H.-L. Chang and J. Li,
``Gromov-Witten invariants of stable maps with fields," 
Int. Math. Res. Not. IMRN 2012, no. 18, 4163--4217.

\bibitem[CLL]{CLL} H.-L. Chang, J. Li, and W.-P. Li, 
``Witten's top Chern class via cosection localization,"
Invent. Math. {\bf 200} (2015), no.3, 1015--1063.

\bibitem[CCK]{CCK} D. Cheong, I. Ciocan-Fontanine, and B. Kim,
``Orbifold quasimap theory," 
Math. Ann. {\bf 363} (2015), no. 3-4, 777--816. 

\bibitem[CIR]{CIR} A. Chiodo, H. Iritani, and Y. Ruan, 
``Landau-Ginzburg/Calabi-Yau correspondence, global mirror symmetry and Orlov equivalence,"
Publ. Math. Inst. Hautes \'{E}tudes Sci. {\bf 19} (2014), 127--216.

\bibitem[CR10]{CR10}  A. Chiodo and Y. Ruan,
``Landau-Ginzburg/Calabi-Yau correspondence for quintic three-folds via symplectic transformations,"
Invent. Math. {\bf 182} (2010), no. 1, 117--165. 

\bibitem[Ch08]{Ch08} C.-H. Cho, 
``Counting real  $J$-holomorphic discs and spheres in dimension four and six,"
J. Korean Math. Soc. {\bf 45}  (2008), no. 5, 1427--1442.


\bibitem[CKS]{CKS} D. Choa, B. Kim, and B. Sreedhar,
``Riemann-Roch for stacky matrix factorizations,"
Forum Math. Sigma {\bf 10} (2022), Paper No. e108, 29 pp.


\bibitem[CK14]{CK14} I. Ciocan-Fontanine and B. Kim,
``Wall-crossing in genus zero quasimap theory and mirror maps,"
Algebr. Geom., {\bf 1}  (2014), no. 4, 400--448.

\bibitem[CK20]{CK20}   I. Ciocan-Fontanine and B. Kim,
``Quasimap wall-crossings and mirror symmetry," 
Publ. Math. Inst. Hautes \'{E}tudes Sci. {\bf 131} (2020), 201--260.

\bibitem[CKM]{CKM} I. Ciocan-Fontanine, B. Kim, and D. Maulik. 
``Stable quasimaps to GIT quotients,"
J. Geom. Phys. {\bf 75} (2014), 17--47. 

\bibitem[CFGKS]{CFGKS} I. Ciocan-Fontanine, D. Favero, J. Gu\'{e}r\'{e}, B. Kim, and M. Shoemaker,
``Fundamental Factorization of a GLSM, Part I: Construction,"
Mem. Amer. Math. Soc. {\bf 289} (2023), no. 1435, iv+96 pp.

\bibitem[Fa20]{Fa20}  B. Fang, 
``Central charges of T-dual braned for toric varieties,"
Trans. Amer. Math. Soc. {\bf 373} (2020), no. 6, 3829--3851.

\bibitem[FL13]{FL13} B. Fang and C.-C. M. Liu,
``Open Gromov-Witten invariants of toric Calabi-Yau 3-folds,"
Comm. Math. Phys. {\bf 323} (2013), no. 1, 285--328.

\bibitem[FLT22]{FLT22} B. Fang, C.-C. M. Liu, and H.-H. Tseng,
``Open-closed Gromov-Witten invariants of 3-dimensional Calabi-Yau smooth toric DM stacks," 
Forum Math. Sigma {\bf 10} (2022), Paper No. e58, 56 pp.

\bibitem[FK]{FK} D. Favero and B. Kim,
``General GLSM Invariants and Their Cohomological Field Theory,"
arXiv:2006.12182.

\bibitem[FJR07]{FJR07} H. Fan, T. J. Jarvis, and Y. Ruan,
``The Witten equation and its virtual fundamental cycle,"
arXiv:0712.4025.

\bibitem[FJR18]{FJR18}  H. Fan, T. J. Jarvis, and Y. Ruan,
``A mathematical theory of the gauged linear sigma model," 
Geom. Topol. {\bf 22} (2018), no. 1, 235--303 (published version) and arXiv:1506.02109v5 (2020). 

\bibitem[Gi96]{Gi96} A. Givental, ``Equivariant Gromov-Witten invariants,''
Internat. Math. Res. Notices {\bf 1996}, no. 13, 613--663.

\bibitem[GKT]{GKT} M. Gross, T. L. Kelly, and R. J. Tessler, 
``Open FJRW Theory and Mirror Symmetry,"
arXiv:2203.02435. 

\bibitem[Ho00]{Ho00} S. Hosono
``Local Mirror Symmetry and Type IIA Monodromy of Calabi-Yau Manifolds,"
Adv. Theor. Math. Phys. {\bf 4} (2000), 335--376.

\bibitem[HR13]{HR13} K. Hori and M. Romo, 
``Exact Results In Two-Dimensional (2,2) Supersymmetric Gauge Theories With Boundary,” 
arXiv:1308.2438.

\bibitem[Ir09]{Ir09} H. Iritani,
``An integral structure in quantum cohomology and mirror symmetry for toric orbifolds,"
Adv. Math. {\bf 222} (2009), no. 3, 1016--1079.

\bibitem[KL13]{KL13} Y.-H. Kiem and J. Li, 
``Localizing virtual cycles by cosections," 
J. Amer. Math. Soc. {\bf 26} (2013), no. 4, 1025--1050.

\bibitem[KL20]{KL20} Y.-H. Kiem and J. Li, 
``Quantum singularity theory via cosection localization,"
J. Reine Angew. Math. {\bf 766} (2020), 73--107.

% \bibitem[KP22]{KP22} B. Kim and A. Polishchuk, 
% J. Inst. Math. Jussieu {\bf 21} (2022), no. 4, 1445--1470.

% \bib item[KRS]{KRS} J. Knapp, M. Romo, and E. Scheidegger, 
% ``D-brane central charge and Landau-Ginzburg orbifolds,"
% Comm. Math. Phys.384(2021), no.1, 609–697.

\bibitem[LM]{LM} W. Lerche, P. Mayr, 
``On N = 1 mirror symmetry for open type II strings," arXiv:hep-th/0111113.

\bibitem[LLY97]{LLY97} B. Lian, K. Liu, S.-T. Yau, 
``Mirror principle I,"
Asian J. Math. {\bf 1} (1997), no. 4, 729--763.

\bibitem[LY22]{LY22} C.-C. M. Liu and S. Yu, 
``Open/closed correspondence via relative/local correspondence,"
Adv. Math. {\bf 410} (2022), Paper No. 108696, 43 pp.

\bibitem[LY]{LY} C.-C. M. Liu and S. Yu, 
``Orbifold open/closed correspondence and mirror symmetry,"
arXiv:2210.11721.

\bibitem[Ma]{Ma} P. Mayr, 
``N=1 mirror symmetry and open/closed string duality,"
arXiv:hep-th/0108229.

\bibitem[MW09]{MW09} D. R. Morrison and J. Walcher,
``D-branes and normal functions,"  
Adv. Theor. Math. Phys. {\bf 13} (2009), no. 2, 553--598. 

\bibitem[Ni]{Ni} S. Nill, ``Extended FJRW theory of the quintic threefold in genus zero,"
talk in S\'{e}minaire de G\'{e}om\'{e}trie Enum\'{e}rative, 2 December 2021. 

\bibitem[PSW08]{PSW08} R. Pandharipande, J. Solomon, and J. Walcher, 
``Disk enumeration on the quintic 3-fold," 
J. Amer. Math. Soc. {\bf 21} (2008), no. 4, 1169--1209. 

\bibitem[PV11]{PV11} A. Polishchuk and A. Vaintrob, 
``Matrix factorizations and singularity categories for stacks," 
Ann. Inst. Fourier (Grenoble) {\bf 61} (2011), no. 7, 2609--2642.

\bibitem[PV16]{PV16} A. Polishchuk and A. Vaintrob, 
``Matrix factorizations and cohomological field theories," 
J. Reine Angew. Math. {\bf 714} (2016), 1--122.

\bibitem[RR17]{RR17} D. Ross and Y. Ruan, 
``Wall-crossing in genus zero Landau-Ginzburg theory,"
J. Reine Angew. Math. {\bf 733} (2017), 183--201. 

\bibitem[So]{So} J. P. Solomon, 
``Intersection theory on the moduli space of holomorphic curves with Lagrangian boundary conditions,"
MIT thesis, arXiv:math.SG/0606429.

\bibitem[TX18]{TX18} G. Tian and G. Xu, 
``Analysis of gauged Witten equation,"
J. Reine Angew. Math. {\bf 740} (2018), 187–274.

\bibitem[TX21]{TX21}  G. Tian and G. Xu, 
``Virtual cycles of gauged Witten equation,"
J. Reine Angew. Math. {\bf 771} (2021), 1--64.

\bibitem[TX16]{TX16} G. Tian and G. Xu, 
``Correlation functions of gauged linear  $\sigma$-model,"
Sci. China Math. {\bf 59} (2016), no. 5, 823--838.

\bibitem[TX20]{TX20} G. Tian and G. Xu, 
``A wall-crossing formula and the invariance of GLSM correlation functions,"
Peking Math. J. {\bf 3} (2020), no. 2, 235--291.

\bibitem[TX17]{TX17} G. Tian and G. Xu, 
``The symplectic approach of gauged linear  $\sigma$-model,"
G\"{o}kova Geometry/Topology Conference (GGT), G\"{o}kova, 2017, 86–111.

\bibitem[TX]{TX} G. Tian and G. Xu, 
``Gauged Linear Sigma Model in Geometric Phases. I,"
arXiv:1809.00424, 
``Gauged linear sigma model in geometric phases. II. the virtual cycle,"
arXiv:2407.14545.


\bibitem[TZ1]{TZ1}  R. J. Tessler and Y. Zhao, 
``The point insertion technique and open $r$-spin theories I: moduli and orientation,"
arXiv:2310.13185.

\bibitem[TZ2]{TZ2}  R. J. Tessler and Y. Zhao, 
``The point insertion technique and open $r$-spin theories II: intersection theories in genus-zero,"
arXiv:2311.11779. 

\bibitem[Wa07]{Wa07} J. Walcher, 
``Opening mirror symmetry on the quintic,"
Comm. Math. Phys. {\bf 276} (2007), no. 3, 671--689. 

\bibitem[Wa08]{Wa08} J. Walcher, 
``Open strings and extended mirror symmetry," 
Modular forms and string duality, 279--297, Fields Inst. Commun., {\bf 54}, Amer. Math. Soc., Providence, RI, 2008.

\bibitem[Wa09]{Wa09} J.Walcher,
"Evidence for tadpole cancelation in the topological string,"
Commun. Number Theory Phys. {\bf 3} (2009), no. 1, 111--172.

\bibitem[We05]{We05} J. Y. Welschinger, 
``Invariants of real symplectic 4-manifolds and lower bounds in real enumerative geometry," 
Invent. Math. {\bf 162} (2005), no. 1, 195--234.

\bibitem[We]{We} J. Y. Welschinger, 
``Spinor states of real rational curves in real alge-braic convex 3-manifolds and enumerative invariants,"
Duke Math. J. {\bf 127} (2005), no. 1, 89--121. 

\bibitem[Wi93]{Wi93} E. Witten, 
``Phases of N=2 theories in two dimensions," 
Nuclear Phys. {\bf B 403} (1993), no. 1-2, 159--222.


\end{thebibliography}
\end{document}